\theoremstyle{definition}
\newtheorem*{defn*}{\protect\definitionname}
\theoremstyle{plain}
\newtheorem{thm}{\protect\theoremname}[section]
\theoremstyle{plain}
\newtheorem{prop}[thm]{\protect\propositionname}
\theoremstyle{remark}
\newtheorem{rem}[thm]{\protect\remarkname}
\theoremstyle{plain}
\newtheorem{cor}[thm]{\protect\corollaryname}
\theoremstyle{plain}
\newtheorem{lem}[thm]{\protect\lemmaname}
\theoremstyle{definition}
\newtheorem{example}[thm]{\protect\examplename}
\newcommand*{\dom}{\operatorname{dom}}  
\newcommand*{\ran}{\operatorname{ran}}            
\newcommand{\R}{\mathbb{R}}
\newcommand*{\dive}{\operatorname{div} \,}
\renewcommand{\d}{\,\mathrm{d}}
\newcommand*{\grad}{\operatorname{grad} \,}
\newcommand*{\supp}{\operatorname{supp}}
\newcommand*{\e}{\mathrm{e}}
\DeclareMathAccent{\Circ}{\mathalpha}{operators}{"17}
\newcommand{\interior}[1]{\Circ{#1}}
\DeclareMathAccent{\fiss}{\mathalpha}{operators}{"15}
\newcommand{\sym}{\operatorname{sym}}
\renewcommand{\skew}{\operatorname{skew}}
\newcommand{\N}{\mathbb{N}}
\newcommand{\spt}{\operatorname{spt}}
\renewcommand*{\epsilon}{\varepsilon}
\renewcommand*{\theta}{\vartheta}
\renewcommand{\tilde}{\widetilde}
\theoremstyle{definition}
\newtheorem{hyp}{Hypotheses}
\providecommand{\corollaryname}{Corollary}
\providecommand{\definitionname}{Definition}
\providecommand{\examplename}{Example}
\providecommand{\lemmaname}{Lemma}
\providecommand{\propositionname}{Proposition}
\providecommand{\remarkname}{Remark}
\providecommand{\theoremname}{Theorem}
\begin{document}
\title{{\Huge{}Dynamic First Order Wave Systems with Drift Term on Riemannian
Manifolds.}}
\author{Rainer Picard\thanks{Institut für Analysis, TU Dresden, Germany}
and Sascha Trostorff\thanks{Mathematisches Seminar, CAU Kiel, Germany}}
\maketitle
\begin{abstract}
An abstract first order differential equation of hyperbolic type with
drift term on a Riemannian manifold is considered. For proving its
well-posedness, transmutator and commutator relations are needed,
which are studied in a general functional analytic setting. 
\end{abstract}

\section{Introduction}

In isotropic and homogeneous media linear acoustic waves are governed
by a system combining Euler's force equation (momentum balance)
\begin{equation}
D_{t}v+\grad p=0\label{eq:Euler}
\end{equation}
with the continuity equation (mass balance)
\begin{equation}
D_{t}p+\dive v=f,\label{eq:Cont}
\end{equation}
where we have for simplicity reduced all parameters by rescaling (in
effect having in particular $1$ as the speed of sound). Here $p$
denotes the pressure and $v$ the velocity field associates with the
acoustic wave, $f$ is a given source term. In a rest frame situation
$D_{t}=\partial_{0}$, the partial derivative with respect to time.
In moving media, however, we have -- by a suitable rotation of coordinates
in $\mathbb{R}^{3}$ -- $D_{t}=\partial_{0}+v_{0}\partial_{3}$,
with $v_{0}\in\mathbb{R}$ denoting the velocity of the drift with
direction $e_{3}=\left(0,0,1\right)$ of the underlying media, the
so-called convective, substantial or material derivative. Assuming
with much loss of generality an irrotational velocity field we can
reduce this to a bi-isotropic, homogeneous, but otherwise standard
acoustic wave system\footnote{This is mimicking the approach of constructing the Maxwell-Hertz-Cohn
system of ``pre-relativity'' electrodynamics in moving media.} by introducing
\[
\left(\begin{array}{c}
\tilde{p}\\
\tilde{v}
\end{array}\right)\coloneqq\left(\begin{array}{cc}
1 & v_{0}e_{3}^{\top}\\
v_{0}e_{3} & 1
\end{array}\right)\left(\begin{array}{c}
p\\
v
\end{array}\right)
\]
as new unknowns yielding the block system
\[
\left(\partial_{0}M_{0}+\left(\begin{array}{cc}
0 & \dive\\
\grad & 0
\end{array}\right)\right)\left(\begin{array}{c}
\tilde{p}\\
\tilde{v}
\end{array}\right)=\left(\begin{array}{c}
f\\
0
\end{array}\right)
\]
with
\[
M_{0}=\left(\begin{array}{cc}
\frac{1}{1-v_{0}^{2}} & -\frac{v_{0}}{1-v_{0}^{2}}e_{3}^{\top}\\
-\frac{v_{0}}{1-v_{0}^{2}}e_{3} & \frac{1}{1-v_{0}^{2}}
\end{array}\right).
\]
Since for well-posedness of this system we should have that $M_{0}$
is strictly positive definite, we obtain
\begin{equation}
v_{0}^{2}<1,\label{eq:Mach-less}
\end{equation}
i.e. Mach number less than $1$, as a reasonable constraint. This
constraint also recurs in the perspective of a second order approach.
Indeed, eliminating\footnote{Eliminating instead the pressure $p$ yields the Galbrun equation,
\cite{Galbrun1931}, 
\[
\partial_{0}^{2}v+2v_{0}\partial_{0}\partial_{3}v-\left(\grad\dive-v_{0}^{2}\partial_{3}^{2}\right)v=-\grad f
\]
for the velocity field $v$, which seemingly also suggests to impose
\eqref{eq:Mach-less}, compare e.g. \cite{BONNETBENDHIA2001}. } the velocity field $v$ from the system \eqref{eq:Euler}, \eqref{eq:Cont}
yields
\[
\partial_{0}^{2}p+2v_{0}\partial_{0}\partial_{3}p-\left(\partial_{1}^{2}+\partial_{2}^{2}+\left(1-v_{0}^{2}\right)\partial_{3}^{3}\right)p=\partial_{0}f
\]
and requiring ellipticity of the second order spatial operator $\left(\partial_{1}^{2}+\partial_{2}^{2}+\left(1-v_{0}^{2}\right)\partial_{3}^{3}\right)$
imposes again \eqref{eq:Mach-less}. In contrast, looking at the original
system as a standard evolution equation in $L^{2}\left(\mathbb{R}^{3},\mathbb{R}^{4}\right)$
we see that 
\begin{equation}
\left(\partial_{0}+\left(\begin{array}{cccc}
v_{0}\partial_{3} & \partial_{1} & \partial_{2} & \partial_{3}\\
\partial_{1} & v_{0}\partial_{3} & 0 & 0\\
\partial_{2} & 0 & v_{0}\partial_{3} & 0\\
\partial_{3} & 0 & 0 & v_{0}\partial_{3}
\end{array}\right)\right)\left(\begin{array}{c}
p\\
v_{1}\\
v_{2}\\
v_{3}
\end{array}\right)=\left(\begin{array}{c}
f\\
0\\
0\\
0
\end{array}\right)\label{eq:Friedrichs0}
\end{equation}
results in a well-posed system for arbitrary $v_{0}\in\mathbb{R}$,
since the spatial operator is essentially skew-selfadjoint by the
function calculus of the commuting skew-selfadjoint partial derivatives
provided by the spatial Fourier transform. This exposes the constraint
\eqref{eq:Mach-less} as a mathematical artefact and strongly suggests
a direct approach via \eqref{eq:Friedrichs0} and for the general
anisotropic, inhomogeneous case via Friedrichs type systems. This
has been successfully done in \cite{hgg2019wellposedness} in the
framework of \cite{Rauch1985} for the Euclidean case with sufficiently
smooth boundary and material properties. The purpose of this paper
is to explore the Friedrichs type approach from a more functional
analytical perspective with the aim of including in particular non-smooth
boundaries. Indeed, no boundary regularity is needed, which is a benefit
of the carefully constructed functional analytical setting. In particular,
the common-place use of boundary traces is avoided. To include a variety
of geometries we found it also helpful to give the discussion a more
differential geometric flavor by generalising the discussion to a
differential form setting on Riemannian manifolds. This allows to
conveniently handle coordinate transformations since the differential
form calculus on manifolds provides a machinery for this. As a benefit
we also cover with our approach wave propagation for example on surfaces
(with or without boundary). Since there is no added mathematical difficulty
we include as a by-product forms of arbitrary degree, thus addressing
for example also Maxwell's equations\footnote{The latter is of course a mere historical comment, since Maxwell's
equations in moving media are properly considered in the frame work
of relativity theory, which essentially removes the difficulty of
having to deal with a separate drift term.}in a unified setting. More precisely, we consider equations of the
form 
\begin{equation}
\left(\partial_{0}M_{0}+\alpha\left(\begin{array}{cc}
\nabla_{X_{0}} & 0\\
0 & \nabla_{X_{0}}
\end{array}\right)M_{0}+M_{1}+\left(\begin{array}{cc}
0 & -d^{\ast}\\
d & 0
\end{array}\right)\right)U=F\label{eq:abstatc_prob}
\end{equation}
on a non-empty open subset of a Riemannian manifold. Here, $d$ denotes
the exterior derivative and $\nabla_{X_{0}}$ the covariant derivative
in direction of a suitable vector field $X_{0}$. The term $\left(\begin{array}{cc}
0 & -d^{\ast}\\
d & 0
\end{array}\right)$ generalises the operator $\left(\begin{array}{cc}
0 & \dive\\
\grad & 0
\end{array}\right)$ in the euclidean situation and $\alpha\left(\begin{array}{cc}
\nabla_{X_{0}} & 0\\
0 & \nabla_{X_{0}}
\end{array}\right)$ is the generalisation of the concrete drift term in direction $e_{3}$
from above. The operators $M_{0}$ and $M_{1}$ are assumed to be
bounded and incorporate material parameters such as mass density,
conductivity, etc. in applications. 

The main goal is to prove the well-posedness of problem \eqref{eq:abstatc_prob}
in a suitable sense. The main idea to tackle this problem is to replace
the drift operator by another drift operator, which is given in terms
of the Lie-derivative of $X_{0}$, to decompose this new drift operator
in its skew-selfadjoint and selfadjoint part and to show that the
sum of the skew-selfadjoint part and the spatial operator $\left(\begin{array}{cc}
0 & -d^{\ast}\\
d & 0
\end{array}\right)$ is essentially skew-selfadjoint. Thus, the problem can be rewritten
as an equation of the form 
\[
\left(\partial_{0}M_{0}+\tilde{M}_{1}+A\right)U=F,
\]
where $A$ is skew-selfadjoint, $M_{0}$ and $\tilde{M}_{1}$ are
bounded, such that $M_{0}$ is selfadjoint and strictly positive definite.
Then it is easy to see, that the problem is well-posed by invoking
the theory of $C_{0}$-semigroups (note that $-\sqrt{M_{0}^{-1}}(\tilde{M}_{1}+A)\sqrt{M_{0}^{-1}}$
generates a $C_{0}$-semigroup) or by the theory of evolutionary equations,
which will be introduced in the next section. The crucial part in
the proof is to show the essential skew-selfadjointness mentioned
above. For doing so, commutator, or more generally, transmutator relations
between different differentiation operators are needed. Hence, we
will provide some abstract results on commutators and transmutators
of operators in Hilbert spaces in Section \ref{sec:Sums-of-Skew-Selfadjoint},
which may be useful also for other applications. Finally, in Section
\ref{sec:An-Application-to} we deal with problem \eqref{eq:abstatc_prob}.
First we introduce all differential operators on Riemannian manifolds
needed in the forthcoming subsections. Then in Subsection \ref{subsec:The-Equations-on}
we prove our main result Theorem \ref{thm:main} showing the well-posedness
of (\ref{eq:abstatc_prob}) in suitable sense under certain restrictions
on the vector field $X_{0}$. Moreover, we show that in cylindrical
domains $\Sigma\times\R$ or $\Sigma\times]-1/2,1/2[$ for $X_{0}=e_{3}$
the assumptions on the vector field are satisfied, so that our solution
theory applies in these cases. Moreover, we comment on how the solution
theory carries over to isometrically transformed manifolds, allowing
to deal with deformed pipes, etc. In the last subsection we provide
an abstract localisation technique, which allows to ``glue together''
different different open subsets $\Omega_{1},\Omega_{2}$ of a manifold,
so that the solution theory on each part carries over to their union
$\Omega_{1}\cup\Omega_{2}$. %

\section{Some Hilbert Space Solution Theory\label{sec:Some-Hilbert-Space}}

We recall the basic Hilbert space setting for dealing with \emph{evolutionary
}equations; that is, differential equations of the form 
\[
\left(\partial_{0}M_{0}+M_{1}+A\right)U=F.
\]
The results are based on the observations made in \cite{Picard2009}
(see also \cite[Chapter 6]{PicardMcGhee2011} and \cite{seifert2020evolutionary}).
Throughout, let $H$ be a real Hilbert space.
\begin{defn*}
For $\rho\geq0$ we define the space 
\[
L_{2,\rho}(\R;H)\coloneqq\{f:\R\to H\,;\,f\text{ measurable, }\int_{\R}\|f(t)\|^{2}\e^{-2\rho t}\d t\}
\]
equipped with the obvious inner product. Moreover, we define the weighted
Sobolev space
\[
H_{\rho}^{1}(\R;H)\coloneqq\{f\in L_{2,\rho}(\R;H)\,;\,f'\in L_{2,\rho}(\R;H)\},
\]
where $f'$ is meant in the sense of distributions, and the operator
\[
\partial_{0,\rho}:H_{\rho}^{1}(\R;H)\subseteq L_{2,\rho}(\R;H)\to L_{2,\rho}(\R;H),\quad f\mapsto f'.
\]
\end{defn*}
\begin{prop}
The operator $\partial_{0,\rho}$ is normal with $\sym\partial_{0,\rho}=\frac{1}{2}\left(\overline{\partial_{0,\rho}+\partial_{0,\rho}^{\ast}}\right)=\rho.$
Moreover, $\partial_{0,\rho}$ is invertible if and only if $\rho>0$
and in this case 
\[
\left(\partial_{0,\rho}^{-1}f\right)(t)=\int_{-\infty}^{t}f(s)\d s\quad(t\in\R,f\in L_{2,\rho}(\R;H)).
\]
\end{prop}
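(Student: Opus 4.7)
The plan is to reduce the statement to the classical properties of the weak derivative on the unweighted space $L_{2}(\R;H)$, via an exponential rescaling that turns $L_{2,\rho}(\R;H)$ into the standard $L_{2}$ space. First I would introduce the unitary operator
\[
U_{\rho}\colon L_{2,\rho}(\R;H)\to L_{2}(\R;H),\qquad (U_{\rho}f)(t):=\e^{-\rho t}f(t),
\]
which is manifestly an isometric isomorphism by the very definition of the weighted norm. A short distributional Leibniz calculation shows that $U_{\rho}$ maps $H_{\rho}^{1}(\R;H)$ bijectively onto $H^{1}(\R;H)$ and that
\[
U_{\rho}\,\partial_{0,\rho}\,U_{\rho}^{\ast}=\partial_{0}+\rho,
\]
where $\partial_{0}$ denotes the distributional derivative on $L_{2}(\R;H)$ with its maximal domain $H^{1}(\R;H)$.

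Next I would invoke the standard fact that $\partial_{0}$ is skew-selfadjoint on $L_{2}(\R;H)$; the cleanest argument is the vector-valued Fourier transform, which conjugates $\partial_{0}$ into the multiplication operator by $\ii\xi$, obviously skew-selfadjoint. Since $\rho\in\R$ enters as a bounded scalar, the operator $\partial_{0}+\rho$ is the sum of a skew-selfadjoint operator and a bounded selfadjoint multiple of the identity, hence normal, with closed symmetric part equal to $\rho$ and spectrum $\rho+\ii\R$. Normality, the value of the closed symmetric part, and the spectrum are all preserved under unitary conjugation, so transporting back through $U_{\rho}$ yields that $\partial_{0,\rho}$ is normal, $\sym\partial_{0,\rho}=\rho$, and $\sigma(\partial_{0,\rho})=\rho+\ii\R$. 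In particular $\partial_{0,\rho}$ has a bounded inverse if and only if $0\notin\rho+\ii\R$, which under the standing assumption $\rho\geq 0$ is equivalent to $\rho>0$.

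Finally, to obtain the explicit formula for the inverse, I would solve $(\partial_{0}+\rho)g=h$ on $L_{2}(\R;H)$ by the variation-of-constants ansatz
\[
g(t)=\int_{-\infty}^{t}\e^{-\rho(t-s)}h(s)\d s,
\]
verifying by direct differentiation that $g\in H^{1}(\R;H)$ and $g'+\rho g=h$. Unwinding the conjugation $\partial_{0,\rho}^{-1}=U_{\rho}^{\ast}(\partial_{0}+\rho)^{-1}U_{\rho}$ the two $\e^{\pm\rho\,\cdot}$ factors cancel exactly, leaving the announced formula $(\partial_{0,\rho}^{-1}f)(t)=\int_{-\infty}^{t}f(s)\d s$.

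The main technical point is to check that this candidate convolution actually lands back in $L_{2,\rho}(\R;H)$ with a uniform bound, i.e.\ that the indefinite integral is bounded on the weighted space; this amounts to a Hardy-type $L_{2}$-estimate, but it is already free of charge from the bounded invertibility of $\partial_{0}+\rho$ established above. Everything else is routine, and the only conceptual input is the skew-selfadjointness of $\partial_{0}$ on the standard $L_{2}$ space.
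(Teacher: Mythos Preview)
Your argument is correct and is the standard route to this result: conjugate by the exponential weight to reduce to the unweighted derivative, use skew-selfadjointness of $\partial_{0}$ on $L_{2}(\R;H)$, and then undo the conjugation to read off normality, the symmetric part, the spectrum, and the explicit inverse. Each step checks out, including the cancellation of the exponential factors in the final formula.

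The paper itself does not supply a proof of this proposition; it is stated as a known background fact and the authors refer generally to \cite{Picard2009}, \cite{PicardMcGhee2011} and \cite{seifert2020evolutionary} for the underlying Hilbert space framework. Your unitary-equivalence argument is precisely the one that appears in those references, so there is nothing to compare: what you wrote is the expected proof.
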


If the choice of $\rho$ is clear from the context, we drop the additional
index and just write $\partial_{0}$.
\begin{thm}[{\cite[Solution Theory]{Picard2009}}]
\label{thm:sol_theory} Let $M_{0},M_{1}\in L(H)$ with $M_{0}=M_{0}^{\ast}$
and $A:\dom(A)\subseteq H\to H$ a skew-selfadjoint operator. We extend
all these operators to $L_{2,\rho}(\R;H)$ in the canonical way. Moreover,
we assume that there exists $\rho_{0}\geq0$ and $c>0$ such that
\[
\langle\left(\rho M_{0}+\frac{1}{2}(M_{1}+M_{1}^{\ast})\right)x,x\rangle\geq c\|x\|^{2}\quad(x\in H)
\]
for all $\rho\geq\rho_{0}.$ Then $\partial_{0}M_{0}+M_{1}+A$ is
closable in $L_{2,\rho}(\R;H)$ for each $\rho\geq\rho_{0}$ and the
closure is continuously invertible with 
\[
\|\left(\overline{\partial_{0}M_{0}+M_{1}+A}\right)^{-1}\|_{L(L_{2,\rho}(\R;H))}\leq\frac{1}{c}.
\]
Moreover, the operator $S_{\rho}\coloneqq\left(\overline{\partial_{0}M_{0}+M_{1}+A}\right)^{-1}$
is causal; i.e., for all $f\in L_{2,\rho}(\R;H)$ such that $\spt f\subseteq\R_{\geq a}$
for some $a\in\R$ it follows that $\spt S_{\rho}f\subseteq\R_{\geq a}$,
and the operator $S_{\rho}$ is independent of the choice of $\rho\geq\rho_{0}$
in the sense that $S_{\rho}f=S_{\mu}f$ for each $f\in L_{2,\rho}(\R;H)\cap L_{2,\mu}(\R;H)$
and $\mu,\rho\geq\rho_{0}$.
\end{thm}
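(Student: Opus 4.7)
The plan is to reduce everything to a single energy estimate on a suitable core. Fix $\rho\geq\rho_{0}$, set $L\coloneqq\partial_{0,\rho}M_{0}+M_{1}+A$, and work on $u\in H_{\rho}^{1}(\R;H)\cap\dom(A)$ (with $M_{0},M_{1},A$ extended pointwise to $L_{2,\rho}$). The goal is to compute
\[
\Re\langle Lu,u\rangle_{L_{2,\rho}}=\rho\langle M_{0}u,u\rangle+\langle\tfrac{1}{2}(M_{1}+M_{1}^{\ast})u,u\rangle.
\]
The $A$-term vanishes since $A$ remains skew-selfadjoint under canonical extension; the $M_{1}$-contribution is pointwise symmetrisation; the $\partial_{0}M_{0}$-term comes from rewriting $2\Re\langle\partial_{0}M_{0}u,u\rangle$ as the integral of $\tfrac{d}{dt}\langle M_{0}u(t),u(t)\rangle$ (using $M_{0}=M_{0}^{\ast}$) against $e^{-2\rho t}$ and integrating by parts, which is essentially the content of $\sym\partial_{0,\rho}=\rho$. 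Invoking the hypothesis one then gets $\Re\langle Lu,u\rangle\ge c\|u\|^{2}$, and Cauchy--Schwarz delivers the key bound $\|Lu\|\geq c\|u\|$.

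From this lower bound, $L$ is injective with closed range, hence closable, and $\overline{L}$ inherits the same estimate so that $\overline{L}^{-1}$ exists on $\ran\overline{L}$ with norm at most $1/c$. For surjectivity I would perform the analogous computation for the formal adjoint $L^{\dagger}\coloneqq-\partial_{0,\rho}M_{0}+2\rho M_{0}+M_{1}^{\ast}-A$ (using $\partial_{0,\rho}^{\ast}=-\partial_{0,\rho}+2\rho$, $M_{0}=M_{0}^{\ast}$, $A^{\ast}=-A$), which yields precisely the same lower bound. Hence $\overline{L}^{\ast}$ is also injective, so $\ran\overline{L}=(\ker\overline{L}^{\ast})^{\perp}=L_{2,\rho}(\R;H)$, giving the asserted continuous invertibility with the stated norm bound.

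For $\rho$-independence I would argue that any compactly supported smooth $f$ lies in every $L_{2,\sigma}$ with $\sigma\geq\rho_{0}$; uniqueness together with the fact that $Lu=f$ is the same distributional identity in every weighted space forces $S_{\sigma}f$ to coincide across $\sigma$, and density plus continuity extend the equality to arbitrary $f\in L_{2,\rho}\cap L_{2,\mu}$. For causality I would exploit precisely this uniform-in-$\sigma$ bound: if $f$ has support in $[a,\infty)$ but $u\coloneqq S_{\rho}f$ carried $L^{2}$-mass on a set of positive measure inside some $(-\infty,b)$ with $b<a$, then $\|u\|_{L_{2,\sigma}}$ would grow at least like $e^{-\sigma b}$ as $\sigma\to\infty$ while $\|f\|_{L_{2,\sigma}}$ decays like $e^{-\sigma a}$, contradicting $\|u\|_{L_{2,\sigma}}\le c^{-1}\|f\|_{L_{2,\sigma}}$.

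In my view the main obstacle is not the computation itself but the passage from the smooth core to $\dom(\overline{L})$: one has to ensure that $H_{\rho}^{1}(\R;H)\cap\dom(A)$ is dense enough in the graph topology that the energy identity survives in the closure. I would handle this by first proving the estimate on the core, invoking closability to get $\overline{L}$, then reading off the pointwise lower bound directly from the operator inequality $\Re\overline{L}\ge c$ in the sesquilinear-form sense.
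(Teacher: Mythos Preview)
The paper does not prove this theorem; it is quoted verbatim from \cite{Picard2009} (with pointers to \cite{PicardMcGhee2011} and \cite{seifert2020evolutionary}) and used as background, so there is no in-paper argument to compare against. Your sketch is precisely the standard proof behind that reference: the accretivity estimate $\Re\langle Lu,u\rangle\geq c\|u\|^{2}$ on the core $H_{\rho}^{1}(\R;H)\cap\dom(A)$ via $\sym\partial_{0,\rho}=\rho$, the matching estimate for the formal adjoint, and the exponential-weight argument for causality and $\rho$-independence.

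One small logical correction: the lower bound $\|Lu\|\geq c\|u\|$ by itself does not imply closability (it only gives injectivity on the core). What does the job is exactly the adjoint computation you perform anyway: since $L^{\dagger}$ is densely defined and $L^{\dagger}\subseteq L^{\ast}$, the adjoint $L^{\ast}$ is densely defined, hence $L$ is closable. After that your order of steps is correct: the estimate passes to $\overline{L}$, giving closed range, and injectivity of $\overline{L}^{\ast}$ (from the estimate for $L^{\dagger}$) yields surjectivity. The causality argument is also the standard one; in a fully rigorous version one typically runs it first for compactly supported $f$ (so that membership in every $L_{2,\sigma}$, $\sigma\geq\rho$, is automatic) and then extends by density and the uniform bound.
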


\begin{rem}
$\:$

\begin{enumerate}[(a)]

\item The latter theorem shows that the problem of finding $u\in L_{2,\rho}(\R;H)$
such that 
\[
\left(\overline{\partial_{0}M_{0}+M_{1}+A}\right)u=f
\]
for some $f\in L_{2,\rho}(\R;H)$ is well-posed in the sense of Hadamard.
Indeed, the bijectivity of $\left(\overline{\partial_{0}M_{0}+M_{1}+A}\right)$
yields the existence and uniqueness of a solution for each $f\in L_{2,\rho}(\R;H)$,
while the continuity of the inverse shows the continuous dependence
of the solution $u$ on the data $f$. Moreover, the causality shows
that the equation models a physically reasonable process in time. 

\item The latter theorem is just a special case of \cite[Solution Theory]{Picard2009},
where equations of the form 
\[
(\partial_{0}M(\partial_{0}^{-1})+A)u=f
\]
 for a suitable operator-valued function $M$ of $\partial_{0}^{-1}$
are considered. The special case in Theorem \prettyref{thm:sol_theory}
corresponds to the choice $M(\partial_{0}^{-1})=M_{0}+\partial_{0}^{-1}M_{1}.$
Moreover, several generalisations of Theorem \prettyref{thm:sol_theory}
can be found in the literature, for instance allowing to treat non-autonomous
problems (\cite{PTW2013_nonauto,Waurick2015}) or non-linear problems
(\cite{Trostorff2012,Trostorff2020}).

\end{enumerate}

As an immediate consequence of Theorem \ref{thm:sol_theory}, we obtain
the following perturbation result.
\end{rem}

\begin{cor}
\label{cor:perturbation} Let $M_{0},M_{1}\in L(H)$ with $M_{0}=M_{0}^{\ast}$
and $A:\dom(A)\subseteq H\to H$ a skew-selfadjoint operator. Moreover,
assume that there exists $c>0$ such that 
\[
\langle M_{0}x,x\rangle\geq c\|x\|^{2}\quad(x\in H).
\]
Then there exists $\rho_{0}\geq0$ such that for all $\rho\geq\rho_{0}$
the operator $\partial_{0}M_{0}+M_{1}+A$ is closable and continuously
invertible in $L_{2,\rho}(\R;H)$.
\end{cor}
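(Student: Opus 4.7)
The plan is to reduce the corollary directly to Theorem \ref{thm:sol_theory} by verifying its positive definiteness hypothesis. The only thing that needs to be checked is that the uniform strict positivity of $M_0$, together with boundedness of $M_1$, is enough to absorb the (possibly indefinite) contribution of $\tfrac{1}{2}(M_1 + M_1^\ast)$ once $\rho$ is chosen large enough.

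First I would observe that since $M_1 \in L(H)$, the symmetric part $\tfrac{1}{2}(M_1 + M_1^\ast)$ is a bounded self-adjoint operator, and by Cauchy--Schwarz
\[
\langle \tfrac{1}{2}(M_1 + M_1^\ast) x, x\rangle \geq -\|M_1\|\,\|x\|^2 \quad (x \in H).
\]
Combined with the assumed lower bound $\langle M_0 x, x\rangle \geq c\|x\|^2$, this yields
\[
\langle \left(\rho M_0 + \tfrac{1}{2}(M_1 + M_1^\ast)\right)x,x\rangle \geq (\rho c - \|M_1\|)\|x\|^2.
\]
Now I would set $\rho_0 \coloneqq (\|M_1\| + 1)/c$, so that for every $\rho \geq \rho_0$ the coefficient $\rho c - \|M_1\| \geq 1$. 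Hence the positivity hypothesis of Theorem \ref{thm:sol_theory} holds uniformly in $\rho \geq \rho_0$ with constant (e.g.) $1$.

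Having established this, Theorem \ref{thm:sol_theory} applies verbatim and delivers both assertions: $\partial_0 M_0 + M_1 + A$ is closable on $L_{2,\rho}(\R;H)$ for every $\rho \geq \rho_0$, and its closure has a bounded inverse with operator norm at most $1$. There is no serious obstacle here; the only mildly subtle point is choosing $\rho_0$ strictly larger than $\|M_1\|/c$ rather than merely equal to it, so that the resulting positivity constant is strictly positive and independent of $\rho$, as required by Theorem \ref{thm:sol_theory}.
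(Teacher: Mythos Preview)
Your proof is correct and follows essentially the same approach as the paper: both choose $\rho_{0}$ so that $\rho_{0}c-\|M_{1}\|>0$, verify the resulting lower bound $\langle(\rho M_{0}+\tfrac{1}{2}(M_{1}+M_{1}^{\ast}))x,x\rangle\geq(\rho c-\|M_{1}\|)\|x\|^{2}$, and then invoke Theorem~\ref{thm:sol_theory}. Your version is slightly more explicit in fixing $\rho_{0}=(\|M_{1}\|+1)/c$, but the argument is otherwise identical.
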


\begin{proof}
We choose $\rho_{0}$ such that $\tilde{c}\coloneqq\rho_{0}c-\|M_{1}\|>0$
. Then for $\rho\geq\rho_{0}$ we have that 
\[
\langle\rho M_{0}+\frac{1}{2}(M_{1}+M_{1}^{\ast})x,x\rangle\geq\tilde{c}\|x\|^{2}
\]
and hence, the claim follows from Theorem \ref{thm:sol_theory}.
\end{proof}

\section{Sums of Skew-Selfadjoint Operators: Weak=Strong\label{sec:Sums-of-Skew-Selfadjoint}}

There is a well-developed general theory of sums of discontinuous
operators in Banach spaces, see \cite{daPrato1975}. For sake of simplicity
and transparency, however, we chose instead a more ``pedestrian''
approach fitted to the Hilbert space setting and in keeping with Friedrichs
original approach to positive symmetric systems, \cite{Friedrichs1958},
in which the classical question of the relation between weak and strong
extensions are of significance, \cite{Friedrichs1944}.

\subsection{Transmutators and Commutators}

We begin with defining the notions of transmutators and commutators
for operators on Hilbert spaces. For doing so, let $H_{0},H_{1}$
be Hilbert spaces.
\begin{defn*}
Let $L:H_{1}\to H_{1}$ and $R:H_{0}\to H_{0}$ be continuous linear
operators and $C:\dom(C)\subseteq H_{0}\to H_{1}$ a densely defined
closed linear operator such that $R[\dom(C)]\subseteq\dom(C).$ We
define
\[
\left[L,C,R\right]\coloneqq LC-CR:\dom(C)\subseteq H_{0}\to H_{1}
\]
the \emph{transmutator of $L,R$ and $C$. }Moreover, if $H_{0}=H_{1}$
we set 
\[
\left[R,C\right]\coloneqq\left[R,C,R\right]
\]
the \emph{commutator of $R$ and $C$ }and for convenience
\[
\left[C,R\right]\coloneqq-\left[R,C\right].
\]
\end{defn*}

\begin{lem}
\label{lem:transmutation}Let $L:H_{1}\to H_{1}$ and $R:H_{0}\to H_{0}$
be continuous linear operators and $C:\dom(C)\subseteq H_{0}\to H_{1}$
a densely defined closed linear operator such that $R[\dom(C)]\subseteq\dom(C).$
Assume that $[L,C,R]$ is continuous. 

\begin{enumerate}[(a)]

\item Then $LC$ is closable with 
\[
\overline{LC}\subseteq CR+\overline{[L,C,R]}.
\]
\item If, additionally, $\dom(C)$ is a core for $CR$, then 
\[
\overline{LC}=CR+\overline{[L,C,R]}
\]
and hence, in particular 
\[
\dom(\overline{LC})=\dom(CR).
\]
\end{enumerate}
\end{lem}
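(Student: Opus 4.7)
The plan is to use the algebraic identity $LC = CR + [L,C,R]$ which holds on $\dom(C)$ by definition, and leverage the two hypotheses: first, that the commutator term is continuous (hence extends boundedly to all of $H_0$), and second, that $CR$ is closed (because $C$ is closed and $R$ is continuous on $H_0$). Together these make the right-hand side a closed operator with bounded perturbation, and closability/closure of $LC$ should then follow by an approximation argument.

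For part (a), I would first observe that $CR \colon \dom(CR) \subseteq H_0 \to H_1$ is closed: if $x_n \to x$ in $H_0$ with $CRx_n \to y$ in $H_1$, then $Rx_n \to Rx$ by continuity of $R$, and closedness of $C$ gives $Rx \in \dom(C)$ and $CRx = y$. Next, since $[L,C,R]$ is continuous on $\dom(C)$ (viewed with the $H_0$-norm) and $\dom(C)$ is dense, it extends uniquely to a bounded operator $\overline{[L,C,R]} \in L(H_0,H_1)$. The sum $CR + \overline{[L,C,R]}$ is then closed on $\dom(CR)$, as adding an everywhere defined bounded operator preserves closedness. On $\dom(C) \subseteq \dom(CR)$ we have the literal identity $LCx = CRx + [L,C,R]x = CRx + \overline{[L,C,R]}x$, so $LC \subseteq CR + \overline{[L,C,R]}$; in particular $LC$ is closable and $\overline{LC} \subseteq CR + \overline{[L,C,R]}$.

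For part (b), I need the reverse inclusion. Take $x \in \dom(CR)$. By the core hypothesis there is a sequence $(x_n) \subseteq \dom(C)$ with $x_n \to x$ in $H_0$ and $CRx_n \to CRx$ in $H_1$. Boundedness of $\overline{[L,C,R]}$ gives $[L,C,R]x_n = \overline{[L,C,R]}x_n \to \overline{[L,C,R]}x$, so
\[
LCx_n \;=\; CRx_n + [L,C,R]x_n \;\longrightarrow\; CRx + \overline{[L,C,R]}x.
\]
Hence $x \in \dom(\overline{LC})$ with $\overline{LC}x = CRx + \overline{[L,C,R]}x$, proving $CR + \overline{[L,C,R]} \subseteq \overline{LC}$. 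Equality of the two operators follows, and since $\overline{[L,C,R]}$ is defined on all of $H_0$, we read off $\dom(\overline{LC}) = \dom(CR)$.

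I do not anticipate any real obstacle; the only technical point worth flagging is the interpretation of ``continuous'' for the transmutator, which I am taking to mean bounded with respect to the ambient $H_0$-norm on the (dense) subspace $\dom(C)$, so that the extension $\overline{[L,C,R]}$ is a genuine element of $L(H_0,H_1)$. Everything else is a standard closed-graph/approximation argument enabled by the closedness of $CR$ and the core assumption.
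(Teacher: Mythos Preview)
Your proof is correct and follows essentially the same line as the paper's own argument: both use that $CR$ is closed, that $\overline{[L,C,R]}$ is an everywhere-defined bounded perturbation (so $CR+\overline{[L,C,R]}$ is closed and contains $LC$), and then an approximation through a core sequence in $\dom(C)$ for the reverse inclusion in (b). Your additional remarks (the explicit verification that $CR$ is closed and the comment on the meaning of ``continuous'') are accurate and only make explicit what the paper leaves implicit.
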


\begin{proof}
We note that $\overline{[L,C,R]}:H_{0}\to H_{1}$ is continuous, since
$[L,C,R]$ is densely defined and continuous by assumption. Moreover,
since $C$ is closed, so is $CR$. Since clearly 
\[
LC\subseteq CR+\overline{[L,C,R]}
\]
we infer (a) holds. For showing (b), assume now that $\dom(C)$ is
a core for $CR$ and let $x\in\dom(CR).$ Then we find a sequence
$(x_{n})_{n\in\mathbb{N}}$ in $\dom(C)$ with $x_{n}\to x$ and $CRx_{n}\to CRx$
as $n\to\infty.$ Thus, we have 
\[
LCx_{n}=CRx_{n}+\overline{[L,C,R]}x_{n}\to CRx+\overline{[L,C,R]}x\quad(n\to\infty)
\]
due to the continuity of $\overline{[L,C,R]}.$ The latter proves
$x\in\dom(\overline{LC})$, which shows the claim. 
\end{proof}
\begin{defn*}
Let $C:\dom(C)\subseteq H_{0}\to H_{1}$ and $D:\dom(D)\subseteq H_{0}\to H_{1}$
be densely defined closed linear operators. We call $C$ and $D$
\emph{essentially equal},\emph{ }if $\dom(C)=\dom(D)$ and $C-D$
is continuous.
\end{defn*}
\begin{rem}
~

\begin{enumerate}[(a)]

\item If $C$ and $D$ are essentially equal, we infer that $C=D+\overline{C-D}$. 

\item In the situation of Lemma \ref{lem:transmutation} (b) we have
that $\overline{LC}$ and $CR$ are essentially equal. 

\end{enumerate} 
\end{rem}

\subsection{Commutators with resolvents of $m$-accretive Operators and Convergence
results.}

We now focus on the case $H\coloneqq H_{0}=H_{1}$ and $L=R$. A class
of operators for which we will apply Lemma \ref{lem:transmutation}
is the class of $m$-accretive operators. For doing so, we recall
the definition of these operators. 
\begin{defn*}
Let $C:\dom(C)\subseteq H\to H$ be a linear densely defined closed
operator. Then $C$ is called \emph{accretive}, if 
\[
\forall x\in\dom(C):\:\langle x,Cx\rangle\geq0.
\]
Moreover, $C$ is called $m$-\emph{accretive}, if $C$ is accretive
and $1+C$ is onto\footnote{Equivalently, if $C$ and $C^{*}$ are accretive.}.
\end{defn*}
\begin{rem}
$\,$

\begin{enumerate}[(a)]

\item We remark that a linear operator $C:\dom(C)\subseteq H\to H$
is $m$-accretive if and only if $(1+\eta C)^{-1}:H\to H$ is continuous
for all $\eta\geq0$ and 
\[
\|(1+\eta C)^{-1}\|\leq1.
\]
In particular, the closedness and the dense domain of $C$ follow
from this uniform bound of the resolvents. 

\item From (a) we see that $C^{\ast}$ is $m$-accretive if $C$
is $m$-accretive.

\end{enumerate}
\end{rem}

\begin{defn*}
Let $C:\dom(C)\subseteq H\to H$ be a linear densely defined closed
operator. We call $C$ \emph{quasi-$m$-accretive}, if there exists
an $\eta_{0}>0$ such that $(1+\eta C)^{-1}\in L(H)$ for each $0\leq\eta\leq\eta_{0}$
and 
\[
\sup_{0\leq\eta\leq\eta_{0}}\|(1+\eta C)^{-1}\|<\infty.
\]
\end{defn*}
\begin{lem}
\label{lem:resolvent_m_accretive}If $C$ is quasi-$m$-accretive,
then $(1+\eta C)^{-1}\to1$ strongly as $\eta\to0+$.
\end{lem}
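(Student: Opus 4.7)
The plan is the standard two step density argument that underlies most convergence theorems for resolvents of accretive operators. Write $M \coloneqq \sup_{0 \leq \eta \leq \eta_{0}} \|(1+\eta C)^{-1}\| < \infty$, which is finite by the quasi-$m$-accretivity assumption. The goal is to show that for every $x \in H$, $(1+\eta C)^{-1}x \to x$ as $\eta \to 0+$.

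First I would verify the convergence on the dense subspace $\dom(C)$. For $y \in \dom(C)$ and $0 < \eta \leq \eta_{0}$, the resolvent identity
\[
(1+\eta C)^{-1} y - y = (1+\eta C)^{-1}\bigl(y - (1+\eta C)y\bigr) = -\eta\,(1+\eta C)^{-1} C y
\]
is a one-line calculation, and combined with the uniform bound this yields
\[
\|(1+\eta C)^{-1} y - y\| \leq \eta M \|Cy\| \xrightarrow{\eta\to 0+} 0.
\]

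Second, I would extend the convergence to all of $H$ by a three-$\varepsilon$ argument using the density of $\dom(C)$ (which holds by hypothesis, since $C$ is densely defined) together with the uniform bound $M$. Given $x \in H$ and $\varepsilon > 0$, pick $y \in \dom(C)$ with $\|x-y\| < \varepsilon$ and estimate
\[
\|(1+\eta C)^{-1} x - x\| \leq \|(1+\eta C)^{-1}(x-y)\| + \|(1+\eta C)^{-1} y - y\| + \|y-x\| \leq (M+1)\varepsilon + \eta M \|Cy\|.
\]
Taking $\limsup_{\eta \to 0+}$ leaves $(M+1)\varepsilon$, and since $\varepsilon > 0$ was arbitrary the claim follows.

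There is no real obstacle here; the only point worth flagging is that the argument relies on two ingredients that are both built into the definition of quasi-$m$-accretivity as given in the paper, namely the uniform bound of the resolvents on $[0,\eta_{0}]$ and the dense domain of $C$. Once these are in hand, step one is an algebraic identity and step two is a routine approximation.
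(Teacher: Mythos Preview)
Your proof is correct and follows exactly the same approach as the paper: verify convergence on the dense set $\dom(C)$ via the resolvent identity $(1+\eta C)^{-1}y-y=-\eta(1+\eta C)^{-1}Cy$, then use the uniform resolvent bound to extend to all of $H$. The paper merely states that uniform boundedness reduces the problem to a dense set, whereas you spell out the three-$\varepsilon$ argument explicitly, but the argument is the same.
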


\begin{proof}
Since the resolvents are uniformly bounded near zero, it suffices
to prove the convergence for elements in a dense set, say $\dom(C).$
For $x\in\dom(C)$ we compute 
\[
(1+\eta C)^{-1}x-x=-\eta(1+\eta C)^{-1}\left(Cx\right)\to0\quad(\eta\to0+),
\]
where we again have used the uniform boundedness of the resolvents
near zero.
\end{proof}
\begin{lem}
\label{lem:quasi-m-accretive_ex}Let $C:\dom(C)\subseteq H\to H$
be linear densely defined and closed. Moreover, let $B\in L(H)$ such
that $C-B$ is $m$-accretive. Then $C$ is quasi-$m$-accretive.
\end{lem}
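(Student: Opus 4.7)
The plan is to exploit the identity
\[
1+\eta C = \bigl(1+\eta(C-B)\bigr) + \eta B = \bigl(1+\eta(C-B)\bigr)\bigl(1+\eta\,(1+\eta(C-B))^{-1}B\bigr),
\]
which is valid on $\dom(C)=\dom(C-B)$ since $B\in L(H)$. The $m$-accretivity of $C-B$ gives $(1+\eta(C-B))^{-1}\in L(H)$ with norm at most $1$ for every $\eta\geq 0$, so the operator $\eta\,(1+\eta(C-B))^{-1}B$ is bounded with norm bounded by $\eta\|B\|$.

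First I would choose $\eta_{0}\in\,]0,\infty[\,$ with $\eta_{0}\|B\|<1$ (taking $\eta_{0}$ arbitrary if $B=0$). For $0\le\eta\le\eta_{0}$ a Neumann series argument yields that $1+\eta\,(1+\eta(C-B))^{-1}B$ is boundedly invertible on $H$ with
\[
\bigl\|\bigl(1+\eta\,(1+\eta(C-B))^{-1}B\bigr)^{-1}\bigr\|\le\frac{1}{1-\eta\|B\|}\le\frac{1}{1-\eta_{0}\|B\|}.
\]
Combined with the factorisation above, this shows that $1+\eta C$ is bijective with
\[
(1+\eta C)^{-1}=\bigl(1+\eta\,(1+\eta(C-B))^{-1}B\bigr)^{-1}(1+\eta(C-B))^{-1}\in L(H),
\]
and gives the uniform estimate
\[
\sup_{0\le\eta\le\eta_{0}}\|(1+\eta C)^{-1}\|\le\frac{1}{1-\eta_{0}\|B\|},
\]
which is exactly the definition of quasi-$m$-accretivity.

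There is no real obstacle here; the only subtlety is to verify that the factorisation above is an equality of operators with matching domains. Since $B$ is everywhere defined and bounded, $\dom(1+\eta C)=\dom(C)=\dom(C-B)=\dom(1+\eta(C-B))$, and the factor $(1+\eta(C-B))^{-1}$ maps $H$ onto $\dom(C-B)=\dom(C)$, so both sides of the factorisation act on the same domain and produce the same element of $H$.
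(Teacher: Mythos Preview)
Your proof is correct and essentially the same as the paper's: both choose $\eta_{0}$ with $\eta_{0}\|B\|<1$ and arrive at the same bound $\|(1+\eta C)^{-1}\|\le (1-\eta_{0}\|B\|)^{-1}$. The paper separates injectivity (via the inner-product estimate $\langle(1+\eta C)x,x\rangle\ge(1-\eta_{0}\|B\|)\|x\|^{2}$) from surjectivity (via the contraction-mapping fixed point $x=(1+\eta(C-B))^{-1}(y-\eta Bx)$), whereas you package both at once through the factorisation and a Neumann series; these are just two presentations of the same perturbation argument.
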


\begin{proof}
We choose $\eta_{0}>0$ such that $\eta_{0}\|B\|<1.$ Then we estimate
for each $0\leq\eta\leq\eta_{0}$ 
\begin{align*}
\langle(1+\eta C)x,x\rangle & =\|x\|^{2}+\eta\langle(C-B)x,x\rangle+\eta\langle Bx,x\rangle\\
 & \geq\|x\|^{2}-\eta\|B\|\|x\|^{2}\\
 & \geq(1-\eta_{0}\|B\|)\|x\|^{2},
\end{align*}
which shows that $(1+\eta C)$ is injective. For showing that $1+\eta C$
is onto, we take $y\in H$. By the contraction mapping theorem, there
exists $x\in H$ such that 
\[
x=(1+\eta(C-B))^{-1}(y-\eta Bx)
\]
and it is immediate, that this $x$ satisfies 
\[
(1+\eta C)x=y.
\]
Finally, the estimate above shows that 
\[
\sup_{0\le\eta\leq\eta_{0}}\|(1+\eta C)^{-1}\|\leq\frac{1}{1-\eta_{0}\|B\|}
\]
and hence, the assertion follows.
\end{proof}
\begin{lem}
\label{lem:commutator-resolvent}Let $C:\dom(C)\subseteq H\to H$
be quasi-$m$-accretive and $\alpha:H\to H$ continuous with $\alpha[\dom(C)]\subseteq\dom(C).$
Moreover, we assume that $[\alpha,C]$ is continuous. Then 
\[
[(1+\eta C)^{-1},\alpha]=\eta(1+\eta C)^{-1}\overline{[\alpha,C]}(1+\eta C)^{-1}
\]
for sufficiently small $\eta\geq0.$ In particular 
\[
[(1+\eta C)^{-1},\alpha]\to0\quad(\eta\to0+)
\]
in operator norm. 
\end{lem}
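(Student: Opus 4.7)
The plan is to derive the commutator identity via the standard resolvent trick, and then read off the norm convergence.

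First I would fix $\eta \geq 0$ small enough that $(1+\eta C)^{-1}$ is a bounded operator on $H$ (available by quasi-$m$-accretivity) and pick an arbitrary $x \in H$. Setting $y \coloneqq (1+\eta C)^{-1} x$ we have $y \in \dom(C)$ and $x = y + \eta C y$. By hypothesis $\alpha y \in \dom(C)$, so we may compute
\[
(1+\eta C)\alpha y = \alpha y + \eta C \alpha y = \alpha y + \eta \alpha C y + \eta [C,\alpha] y = \alpha(y + \eta C y) + \eta [C,\alpha] y = \alpha x - \eta [\alpha,C] y,
\]
using $[C,\alpha] = -[\alpha,C]$ on $\dom(C)$. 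Applying $(1+\eta C)^{-1}$ and solving for the commutator yields
\[
\bigl[(1+\eta C)^{-1},\alpha\bigr] x \;=\; (1+\eta C)^{-1}\alpha x - \alpha y \;=\; \eta (1+\eta C)^{-1} [\alpha,C] (1+\eta C)^{-1} x.
\]

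Since $y \in \dom(C)$, $[\alpha,C] y$ coincides with $\overline{[\alpha,C]} y$, and because $\overline{[\alpha,C]} \in L(H)$ (the hypothesis that $[\alpha,C]$ is continuous on its dense domain gives a unique bounded extension), the right-hand side extends by continuity. Both sides are bounded operators agreeing on all of $H$, so
\[
\bigl[(1+\eta C)^{-1},\alpha\bigr] \;=\; \eta (1+\eta C)^{-1}\,\overline{[\alpha,C]}\,(1+\eta C)^{-1},
\]
as claimed.

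For the operator-norm convergence, quasi-$m$-accretivity supplies a constant $K \coloneqq \sup_{0 \le \eta \le \eta_0}\|(1+\eta C)^{-1}\| < \infty$, and therefore
\[
\bigl\|[(1+\eta C)^{-1},\alpha]\bigr\| \;\leq\; \eta\, K^{2}\, \bigl\|\overline{[\alpha,C]}\bigr\| \;\longrightarrow\; 0 \quad(\eta \to 0+).
\]

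The only subtlety is the bookkeeping that $\alpha y$ indeed lies in $\dom(C)$ so that $C\alpha y$ and hence the rearrangement in the first display is literally valid; once that is in place, the whole argument reduces to the one-line computation and an application of the uniform resolvent bound from the definition of quasi-$m$-accretivity. No density or closure argument on $x$ is needed beyond extending $[\alpha,C]$ to its bounded closure.
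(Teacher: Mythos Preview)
Your proof is correct and follows essentially the same route as the paper's: both derive the identity from the relation $\alpha C \subseteq C\alpha + \overline{[\alpha,C]}$ on $\dom(C)$ and then invoke the uniform resolvent bound from quasi-$m$-accretivity for the norm convergence. The only stylistic difference is that the paper phrases the computation as a chain of operator inclusions, whereas you carry it out pointwise for a fixed $x\in H$; the content is identical.
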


\begin{proof}
Since 
\[
\alpha C\subseteq C\alpha+\overline{[\alpha,C]}
\]

we infer that 
\[
\alpha(1+\eta C)\subseteq(1+\eta C)\alpha+\eta\overline{[\alpha,C]}\quad(\eta\geq0).
\]
Hence, 
\[
(1+\eta C)^{-1}\alpha\subseteq\alpha(1+\eta C)^{-1}+\eta(1+\eta C)^{-1}\overline{[\alpha,C]}(1+\eta C)^{-1}
\]
for a $\eta\geq0$ sufficiently small. Since both sides are continuous
operators on $H$, we derive that 
\[
[(1+\eta C)^{-1},\alpha]=(1+\eta C)^{-1}\alpha-\alpha(1+\eta C)^{-1}=\eta(1+\eta C)^{-1}\overline{[\alpha,C]}(1+\eta C)^{-1}.
\]
Finally, since $(1+\eta C)^{-1}$ is uniformly bounded in $\eta,$
we infer the asserted convergence result. 
\end{proof}
We can prove an even stronger convergence result, than the one in
the previous lemma.
\begin{prop}
\label{prop:C_commuatotr}Let $C:\dom(C)\subseteq H\to H$ be quasi-$m$-accretive
and $\alpha:H\to H$ continuous with $\alpha[\dom(C)]\subseteq\dom(C).$
Moreover, we assume that $[\alpha,C]$ is continuous. Then $C[(1+\eta C)^{-1},\alpha]:H\to H$
is continuous and 
\[
C[(1+\eta C)^{-1},\alpha]\to0
\]
strongly as $\eta\to0+$.
\end{prop}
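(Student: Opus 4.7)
The plan is to start from the explicit formula for the commutator obtained in Lemma \ref{lem:commutator-resolvent} and to pull $C$ inside using the resolvent identity. Concretely, Lemma \ref{lem:commutator-resolvent} yields, for all sufficiently small $\eta\geq 0$,
\[
[(1+\eta C)^{-1},\alpha]=\eta(1+\eta C)^{-1}\overline{[\alpha,C]}(1+\eta C)^{-1}.
\]
Because $(1+\eta C)^{-1}$ maps $H$ into $\dom(C)$, the right-hand side takes values in $\dom(C)$, so I may apply $C$. Using the algebraic identity $\eta C(1+\eta C)^{-1}=1-(1+\eta C)^{-1}$, I obtain
\[
C[(1+\eta C)^{-1},\alpha]=\bigl(1-(1+\eta C)^{-1}\bigr)\,\overline{[\alpha,C]}\,(1+\eta C)^{-1}.
\]
The right-hand side is a composition of everywhere defined continuous operators, which establishes the first claim, namely continuity of $C[(1+\eta C)^{-1},\alpha]$ on $H$.

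For the strong convergence, fix $x\in H$ and set $y_\eta\coloneqq\overline{[\alpha,C]}(1+\eta C)^{-1}x$ and $y\coloneqq\overline{[\alpha,C]}x$. By Lemma \ref{lem:resolvent_m_accretive} we have $(1+\eta C)^{-1}x\to x$ as $\eta\to 0+$, and since $\overline{[\alpha,C]}$ is continuous this gives $y_\eta\to y$. I split
\[
C[(1+\eta C)^{-1},\alpha]x=\bigl(1-(1+\eta C)^{-1}\bigr)(y_\eta-y)+\bigl(1-(1+\eta C)^{-1}\bigr)y.
\]
The second term tends to $0$ by another application of Lemma \ref{lem:resolvent_m_accretive}. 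The first term is controlled by the uniform bound $\sup_{\eta}\|1-(1+\eta C)^{-1}\|<\infty$ guaranteed by quasi-$m$-accretivity, multiplied by $\|y_\eta-y\|\to 0$. Hence $C[(1+\eta C)^{-1},\alpha]x\to 0$ as $\eta\to 0+$.

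The only subtle point is making sure the manipulation $C\circ\eta(1+\eta C)^{-1}=1-(1+\eta C)^{-1}$ is legitimate when composed with the bounded factor $\overline{[\alpha,C]}(1+\eta C)^{-1}$; this is immediate from the fact that $\eta(1+\eta C)^{-1}$ ranges in $\dom(C)$, so no closure argument is needed. I expect the main obstacle to be purely bookkeeping, namely tracking the domains of $C$ through the various compositions; once that is settled, the combination of Lemma \ref{lem:commutator-resolvent}, Lemma \ref{lem:resolvent_m_accretive} and the uniform resolvent bound delivers the result directly.
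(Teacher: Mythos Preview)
Your proof is correct and follows essentially the same route as the paper: both start from Lemma~\ref{lem:commutator-resolvent}, apply $C$ via the identity $\eta C(1+\eta C)^{-1}=1-(1+\eta C)^{-1}$, and then conclude strong convergence from Lemma~\ref{lem:resolvent_m_accretive} together with the uniform resolvent bound. Your splitting argument for the limit is slightly more explicit than the paper's one-line conclusion, but the underlying idea is identical.
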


\begin{proof}
By Lemma \ref{lem:commutator-resolvent} we have that 
\[
[(1+\eta C)^{-1},\alpha]=\eta(1+\eta C)^{-1}\overline{[\alpha,C]}(1+\eta C)^{-1}
\]
and thus 
\begin{align*}
C[(1+\eta C)^{-1},\alpha] & =\eta C(1+\eta C)^{-1}\overline{[\alpha,C]}(1+\eta C)^{-1}\\
 & =\overline{[\alpha,C]}(1+\eta C)^{-1}-(1+\eta C)^{-1}\overline{[\alpha,C]}(1+\eta C)^{-1}
\end{align*}
for each sufficiently small $\eta\geq0.$ The latter expression yields
the claim as $(1+\eta C)^{-1}\to1$ strongly as $\eta\to0+$ by \prettyref{lem:resolvent_m_accretive}. 
\end{proof}
With these preparations at hand, we can state our main theorem of
this subsection.
\begin{thm}
\label{thm:weak-strong-accretive}Let $C:\dom(C)\subseteq H\to H$
be quasi-$m$-accretive and $\alpha:H\to H$ continuous with $\alpha[\dom(C)]\subseteq\dom(C).$
Moreover, we assume that $[\alpha,C]$ is continuous. Then 
\[
\overline{\alpha C}=C\alpha+\overline{[\alpha,C]}.
\]
\end{thm}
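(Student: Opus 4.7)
The plan is to reduce the statement to Lemma \ref{lem:transmutation}(b) applied with $L=R=\alpha$. Part (a) of that lemma already yields
\[
\overline{\alpha C}\subseteq C\alpha+\overline{[\alpha,C]},
\]
so everything hinges on verifying the hypothesis of part (b), namely that $\dom(C)$ is a core for $C\alpha$. The reverse inclusion (and with it the equality of domains) then comes for free.

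To establish the core property, I would take an arbitrary $x\in\dom(C\alpha)$ (i.e.\ $x\in H$ with $\alpha x\in\dom(C)$) and use the Yosida-type approximants $x_{\eta}\coloneqq(1+\eta C)^{-1}x$ for small $\eta>0$. Since $(1+\eta C)^{-1}$ maps $H$ into $\dom(C)$ and $\alpha[\dom(C)]\subseteq\dom(C)$, we have $x_{\eta}\in\dom(C)\subseteq\dom(C\alpha)$. Convergence $x_{\eta}\to x$ as $\eta\to 0+$ is immediate from Lemma \ref{lem:resolvent_m_accretive}.

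The key step is the graph convergence $C\alpha x_{\eta}\to C\alpha x$. Here I would decompose
\[
C\alpha x_{\eta}=C\alpha(1+\eta C)^{-1}x=C(1+\eta C)^{-1}\alpha x+C[\alpha,(1+\eta C)^{-1}]x.
\]
Because $\alpha x\in\dom(C)$, the first summand equals $(1+\eta C)^{-1}C\alpha x$ and hence tends to $C\alpha x$ by Lemma \ref{lem:resolvent_m_accretive}. The second summand tends to $0$ strongly by Proposition \ref{prop:C_commuatotr}. Combining these identifications produces the required approximating sequence, so $\dom(C)$ is indeed a core for $C\alpha$, and Lemma \ref{lem:transmutation}(b) closes the argument.

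The only genuinely delicate point I expect is the handling of the commutator term $C[\alpha,(1+\eta C)^{-1}]x$: a naive estimate via $\|C(1+\eta C)^{-1}\|\lesssim\eta^{-1}$ would diverge, and it is exactly the rewriting from Proposition \ref{prop:C_commuatotr} that keeps $C$ absorbed inside the resolvents. Once that observation is in place, the proof is a routine assembly of the earlier lemmas.
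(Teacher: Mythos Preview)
Your proposal is correct and matches the paper's proof essentially line for line: reduce to Lemma \ref{lem:transmutation}(b), approximate $x\in\dom(C\alpha)$ by $(1+\eta C)^{-1}x\in\dom(C)$, and split $C\alpha(1+\eta C)^{-1}x$ into $(1+\eta C)^{-1}C\alpha x$ plus a commutator term handled by Proposition \ref{prop:C_commuatotr}. The only cosmetic difference is that the paper uses the discrete sequence $\eta=1/n$ rather than a continuous parameter.
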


\begin{proof}
By Lemma \ref{lem:transmutation} it suffices to prove that $\dom(C)$
is a core for $C\alpha.$ For doing so, let $x\in\dom(C\alpha)$ and
define 
\[
x_{n}\coloneqq\left(1+\frac{1}{n}C\right)^{-1}x\quad(n\in\mathbb{N}\text{ sufficiently large}).
\]
Then $x_{n}\in\dom(C)$ for each $n\in\mathbb{N}$ and $x_{n}\to x$
as $n\to\infty$ by \prettyref{lem:resolvent_m_accretive}. Moreover,
\begin{align*}
C\alpha x_{n} & =C\alpha\left(1+\frac{1}{n}C\right)^{-1}x\\
 & =C\left(1+\frac{1}{n}C\right)^{-1}\alpha x-C\left[\left(1+\frac{1}{n}C\right)^{-1},\alpha\right]x\\
 & =\left(1+\frac{1}{n}C\right)^{-1}C\alpha x-C\left[\left(1+\frac{1}{n}C\right)^{-1},\alpha\right]x\\
 & \to C\alpha x\quad(n\to\infty)
\end{align*}
by Proposition \ref{lem:commutator-resolvent}, which yields the claim. 
\end{proof}
\begin{rem}
The latter theorem can be interpreted as a ``weak=strong'' result
in the following sense:
\begin{itemize}
\item $\left(C^{*}\right)^{*}=C^{**}=\overline{C}$ for a closable densely
defined linear operator $C$ (``weak=strong'' for operators). This
yields in particular $\overline{\alpha C}=\left(C^{*}\alpha^{*}\right)^{*}$.
\item With our commutator results we get $\overline{\alpha C}=C\alpha+\overline{\left[\alpha,C\right]}=\left(\alpha^{*}C^{*}\right)^{*}+\overline{\left[\alpha,C\right]}$
showing that $\overline{\alpha C}$ and $\left(\alpha^{\ast}C^{\ast}\right)^{\ast}$
are essentially equal (``weak=strong'' for operator products).
\end{itemize}
\end{rem}

We conclude this subsection by the following corollary.
\begin{cor}
\label{cor:adjoint_m_accretive}Let $C:\dom(C)\subseteq H\to H$ linear
closed and densely defined, and $\alpha:H\to H$ linear and continuous.
Assume that $\alpha[\dom(C)]\subseteq\dom(C)$ and $[\alpha,C]$ is
continuous. Then 
\[
[\alpha,C]^{\ast}=\overline{[C^{\ast},\alpha^{\ast}]}.
\]
If, additionally, $C$ is quasi-$m$-accretive, we have that
\[
(\alpha C)^{\ast}=\overline{\alpha^{\ast}C^{\ast}}+\overline{\left[C^{*},\alpha^{*}\right]}.
\]
\end{cor}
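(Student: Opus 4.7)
The plan is to handle the two assertions in order, since part (1) is a direct adjoint computation that also supplies the hypotheses needed to apply Theorem \ref{thm:weak-strong-accretive} on the adjoint side in part (2).

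For part (1), I would pick $y\in\dom(C^{\ast})$ and, using $\alpha x\in\dom(C)$, expand for $x\in\dom(C)$
\[
\langle[\alpha,C]x,y\rangle=\langle Cx,\alpha^{\ast}y\rangle-\langle x,\alpha^{\ast}C^{\ast}y\rangle.
\]
The left-hand side also equals $\langle x,\overline{[\alpha,C]}^{\ast}y\rangle$ since $\overline{[\alpha,C]}$ is bounded, so the functional $x\mapsto\langle Cx,\alpha^{\ast}y\rangle$ is bounded on $\dom(C)$. This simultaneously yields $\alpha^{\ast}y\in\dom(C^{\ast})$ and $[C^{\ast},\alpha^{\ast}]y=\overline{[\alpha,C]}^{\ast}y$. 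Density of $\dom(C^{\ast})$ (from $C$ being closed and densely defined) together with boundedness of the right-hand side then give $\overline{[C^{\ast},\alpha^{\ast}]}=\overline{[\alpha,C]}^{\ast}=[\alpha,C]^{\ast}$.

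For part (2), I first observe that quasi-$m$-accretivity transfers to $C^{\ast}$ via $((1+\eta C)^{-1})^{\ast}=(1+\eta C^{\ast})^{-1}$, so Theorem \ref{thm:weak-strong-accretive} applies both to $(C,\alpha)$ and, thanks to part (1), to $(C^{\ast},\alpha^{\ast})$. Applied to $(C,\alpha)$ it produces $\overline{\alpha C}=C\alpha+\overline{[\alpha,C]}$; taking adjoints (legitimate because $\overline{[\alpha,C]}$ is bounded) yields $(\alpha C)^{\ast}=(C\alpha)^{\ast}+\overline{[\alpha,C]}^{\ast}$, and by part (1) the last summand is $\overline{[C^{\ast},\alpha^{\ast}]}$. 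So the claim reduces to the product-adjoint identity $(C\alpha)^{\ast}=\overline{\alpha^{\ast}C^{\ast}}$. The inclusion $\overline{\alpha^{\ast}C^{\ast}}\subseteq(C\alpha)^{\ast}$ is immediate from $\langle C\alpha x,y\rangle=\langle x,\alpha^{\ast}C^{\ast}y\rangle$ for $x\in\dom(C\alpha)$, $y\in\dom(C^{\ast})$, plus closedness of $(C\alpha)^{\ast}$. For the reverse inclusion, I would take $y\in\dom((C\alpha)^{\ast})$ and test against $w\in\dom(C)$, which is contained in $\dom(C\alpha)$ by the invariance hypothesis; the commutator identity $\alpha Cw=C\alpha w+[\alpha,C]w$ then gives
\[
\langle Cw,\alpha^{\ast}y\rangle=\langle\alpha Cw,y\rangle=\langle w,(C\alpha)^{\ast}y+\overline{[\alpha,C]}^{\ast}y\rangle,
\]
so $\alpha^{\ast}y\in\dom(C^{\ast})$, and Theorem \ref{thm:weak-strong-accretive} applied to $(C^{\ast},\alpha^{\ast})$ identifies this condition with $y\in\dom(\overline{\alpha^{\ast}C^{\ast}})$.

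The main obstacle is the product-adjoint identity $(C\alpha)^{\ast}=\overline{\alpha^{\ast}C^{\ast}}$: only one inclusion is automatic from the boundedness of $\alpha$, while the reverse genuinely exploits the continuity of $[\alpha,C]$ and depends on Theorem \ref{thm:weak-strong-accretive} applied on the dual side to pin down the domain of $\overline{\alpha^{\ast}C^{\ast}}$ as $\{y:\alpha^{\ast}y\in\dom(C^{\ast})\}$.
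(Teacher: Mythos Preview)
Your proof is correct. Part (1) is essentially the paper's argument unwound as an explicit inner-product computation rather than a chain of operator inclusions; the content is identical.

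For part (2) you take a longer route than the paper. Both arguments reduce the claim to the product-adjoint identity $(C\alpha)^{\ast}=\overline{\alpha^{\ast}C^{\ast}}$. The paper dispatches this in one line via the standard fact that $(\alpha^{\ast}C^{\ast})^{\ast}=C^{\ast\ast}\alpha^{\ast\ast}=C\alpha$ (valid because the left factor $\alpha^{\ast}$ is bounded and $C^{\ast}$ is densely defined), whence $\overline{\alpha^{\ast}C^{\ast}}=(\alpha^{\ast}C^{\ast})^{\ast\ast}=(C\alpha)^{\ast}$; no commutator hypothesis or quasi-$m$-accretivity of $C^{\ast}$ is needed here. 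You instead verify that $C^{\ast}$ is quasi-$m$-accretive, feed the pair $(C^{\ast},\alpha^{\ast})$ back into Theorem \ref{thm:weak-strong-accretive} to identify $\dom(\overline{\alpha^{\ast}C^{\ast}})=\dom(C^{\ast}\alpha^{\ast})$, and then show directly that any $y\in\dom((C\alpha)^{\ast})$ satisfies $\alpha^{\ast}y\in\dom(C^{\ast})$. This is sound but uses more machinery than necessary; the advantage is only that it makes the domain characterisation $\dom(\overline{\alpha^{\ast}C^{\ast}})=\{y:\alpha^{\ast}y\in\dom(C^{\ast})\}$ explicit, which may be of independent interest.
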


\begin{proof}
Since we have $\alpha^{\ast}C^{\ast}\subseteq\left(C\alpha\right)^{\ast}$,
it follows that
\[
\alpha^{\ast}C^{\ast}+[\alpha,C]^{\ast}\subseteq(C\alpha)^{\ast}+[\alpha,C]^{\ast}\subseteq\left(C\alpha+[\alpha,C]\right)^{\ast}=(\alpha C)^{\ast}=C^{\ast}\alpha^{\ast},
\]
which proves that 
\[
[\alpha,C]^{\ast}x=[C^{\ast},\alpha^{\ast}]x\quad(x\in\dom(C^{\ast})).
\]
Since $\dom(C^{\ast})$ is dense and $[\alpha,C]^{\ast}$ is bounded,
we derive 
\[
[\alpha,C]^{\ast}=\overline{[C^{\ast},\alpha^{\ast}]}.
\]

For the second statement, we recall that 
\[
\overline{\alpha C}=C\alpha+\overline{[\alpha,C]}
\]
by Theorem \ref{thm:weak-strong-accretive}. Taking adjoints on both
sides and using the result above, we obtain 
\[
(\alpha C)^{\ast}=(C\alpha)^{\ast}+\overline{[C^{\ast},\alpha^{\ast}]}.
\]
Since 
\[
\overline{\alpha^{\ast}C^{\ast}}=(\alpha^{\ast}C^{\ast})^{\ast\ast}=(C\alpha)^{\ast},
\]
the assertion follows.
\end{proof}

\subsection{Commutators for Quasi-Skew-Selfadjoint Operators\label{subsec:Commutators-for-Quasi-Skew-Selfa}}

Throughout, let $H$ be a Hilbert space, $\alpha\in L(H)$ and $C:\dom(C)\subseteq H\to H$
linear closed and densely defined.
\begin{defn*}
We call $C$ \emph{quasi-skew-selfadjoint}, if $\dom(C)=\dom(C^{\ast})$
and $\sym C\coloneqq\frac{1}{2}\overline{\left(C+C^{\ast}\right)}$
is bounded. 
\end{defn*}
\begin{rem}
$\:$\begin{enumerate}[(a)] \item If $C$ is skew-selfadjoint, then
$C$ is quasi-skew-selfadjoint, since in this case $\sym C=0.$

\item If $C$ is quasi-skew-selfadjoint, we have 
\[
C=-C^{\ast}+2\sym C
\]
and thus, 
\begin{align*}
C-\sym C & =-\left(C^{\ast}-\sym C\right)\\
 & =-\left(C-\sym C\right)^{\ast},
\end{align*}
since $\sym C$ is selfadjoint and bounded, and hence, $C-\sym C$
is skew-selfadjoint and thus, $C$ is quasi-$m$-accretive by Lemma
\ref{lem:quasi-m-accretive_ex} (note that skew-selfadjoint operators
are $m$-accretive).

\end{enumerate}
\end{rem}

\begin{thm}
\label{thm:quasi-skewselfadjoint}Let $C$ be quasi-skew-selfadjoint
and $\alpha\in L(H)$ selfadjoint. Moreover, assume that $\alpha[\dom(C)]\subseteq\dom(C)$
and $[\alpha,C]$ is continuous. Then 
\[
\skew(\alpha C)\coloneqq\frac{1}{2}\overline{\alpha C-(\alpha C)^{\ast}}
\]
is skew-selfadjoint.
\end{thm}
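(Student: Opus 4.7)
The plan is to decompose $C = S + \sym C$, where $S \coloneqq C - \sym C$ is skew-selfadjoint (by the preceding remark) and $\sym C \in L(H)$ is selfadjoint. Then $\alpha\sym C$ is bounded and $\skew(\alpha\sym C) = \tfrac{1}{2}[\alpha,\sym C]$ is itself bounded and skew-selfadjoint, being the commutator of two bounded selfadjoint operators. Since adding a bounded skew-selfadjoint operator to a skew-selfadjoint operator preserves skew-selfadjointness, and since one checks directly that $\skew(\alpha C) = \skew(\alpha S) + \tfrac{1}{2}[\alpha,\sym C]$ (using that $\alpha\sym C$ is bounded and so does not affect the closure operation), the theorem reduces to proving that $\skew(\alpha S)$ is skew-selfadjoint.

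For the main step I would apply Theorem \ref{thm:weak-strong-accretive} and Corollary \ref{cor:adjoint_m_accretive} to the pair $(S,\alpha)$. The hypotheses hold: $S$ is skew-selfadjoint, hence $m$-accretive and in particular quasi-$m$-accretive; $\dom(S) = \dom(C)$ is $\alpha$-invariant; and $[\alpha,S] = [\alpha,C] - [\alpha,\sym C]$ is continuous. These give
\[
\overline{\alpha S} = S\alpha + \overline{[\alpha,S]}, \qquad (\alpha S)^{\ast} = -\overline{\alpha S} + \overline{[\alpha,S]} = -S\alpha,
\]
where the second identity uses $S^{\ast} = -S$ inside the Corollary and then substitutes the first. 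On the common domain $\dom(\alpha S)\cap\dom((\alpha S)^{\ast}) = \dom(S)$ we therefore have $\alpha S - (\alpha S)^{\ast} = \alpha S + S\alpha = 2 S\alpha + [\alpha,S]$. The resolvent approximation $x_n \coloneqq (1 + \tfrac{1}{n}S)^{-1}x$ used in the proof of Theorem \ref{thm:weak-strong-accretive} shows that $\dom(S)$ is a core for $S\alpha$, so closing produces $\skew(\alpha S) = S\alpha + \tfrac{1}{2}\overline{[\alpha,S]}$ with domain $\dom(S\alpha)$.

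To finish I would compute the adjoint of this expression. Corollary \ref{cor:adjoint_m_accretive} gives $[\alpha,S]^{\ast} = \overline{[S^{\ast},\alpha]} = \overline{[\alpha,S]}$, so the bounded extension $\overline{[\alpha,S]}$ is selfadjoint, while taking adjoints in $(\alpha S)^{\ast} = -S\alpha$ yields $(S\alpha)^{\ast} = -\overline{\alpha S} = -S\alpha - \overline{[\alpha,S]}$. Combining,
\[
(\skew(\alpha S))^{\ast} = -S\alpha - \overline{[\alpha,S]} + \tfrac{1}{2}\overline{[\alpha,S]} = -\skew(\alpha S),
\]
with both sides having domain $\dom(S\alpha)$, so $\skew(\alpha S)$ is skew-selfadjoint and the theorem follows. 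The principal obstacle is the domain bookkeeping: one must justify that $\dom(S)$ really is a core for $S\alpha$ (so that the closure of $\alpha S - (\alpha S)^{\ast}$ agrees with the full operator $S\alpha + \tfrac{1}{2}\overline{[\alpha,S]}$ on all of $\dom(S\alpha)$) and confirm that $\dom((\skew(\alpha S))^{\ast})$ coincides with this same $\dom(S\alpha)$ rather than being strictly larger.
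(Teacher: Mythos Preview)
Your proof is correct and relies on the same key tools as the paper (Theorem~\ref{thm:weak-strong-accretive} and Corollary~\ref{cor:adjoint_m_accretive}), but the organization is genuinely different. The paper keeps $C$ and $C^{\ast}$ in play throughout: it first derives the formula $\skew(\alpha C)=\overline{\alpha C}-\alpha\sym(C)-\tfrac12\overline{[C^{\ast},\alpha]}$, then computes the adjoint, uses $C^{\ast}-\sym C=-(C-\sym C)$, and finally reduces the question to a bounded-operator identity on $\dom(C)$ that is checked by hand. Your upfront decomposition $C=S+\sym C$ with $S$ skew-selfadjoint pushes all the bounded clutter into a single term $\tfrac12[\alpha,\sym C]$ and leaves a cleaner core computation for $\skew(\alpha S)$, where $S^{\ast}=-S$ is available immediately; the endgame $(S\alpha)^{\ast}=-S\alpha-\overline{[\alpha,S]}$ is then a one-line consequence of taking adjoints in $(\alpha S)^{\ast}=-S\alpha$ and substituting the weak=strong identity. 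The domain concerns you flag are in fact fully settled by this very computation: since $(S\alpha)^{\ast}$ has domain $\dom(S\alpha)$ and the bounded correction does not change domains, $\dom\big((\skew(\alpha S))^{\ast}\big)=\dom(S\alpha)$ exactly, with no possibility of being strictly larger. Your route is a modest but real simplification of the paper's argument.
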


\begin{proof}
Using Lemma \ref{cor:adjoint_m_accretive} (note that $C$ is quasi-$m$-accretive)
we compute for $x\in\dom(C)=\dom(C^{\ast})$
\begin{align*}
\frac{1}{2}\left(\alpha C-\left(\alpha C\right)^{*}\right)x & =\frac{1}{2}\left(\alpha Cx-\alpha C^{*}x-\left[C^{*},\alpha\right]x\right)\\
 & =\alpha Cx-\frac{1}{2}\alpha(C+C^{\ast})x-\frac{1}{2}[C^{\ast},\alpha]x\\
 & =\alpha Cx-\alpha\sym(C)x-\frac{1}{2}[C^{\ast},\alpha]x,
\end{align*}
which yields 
\begin{equation}
\skew(\alpha C)=\overline{\alpha C}-\alpha\sym(C)-\frac{1}{2}\overline{[C^{\ast},\alpha]}.\label{eq:skew}
\end{equation}
Thus, 
\begin{align*}
\skew(\alpha C)^{\ast} & =(\alpha C)^{\ast}-\sym(C)\alpha-\frac{1}{2}\left[C^{\ast},\alpha\right]^{\ast}\\
 & =(C^{\ast}-\sym(C))\alpha-\frac{1}{2}\overline{[\alpha,C]}
\end{align*}
again by Lemma \ref{cor:adjoint_m_accretive}. Since $C$ is quasi-skew-selfadjoint,
we obtain 
\[
C^{\ast}-\sym(C)=-\left(C-\sym(C)\right)
\]
and hence, 
\[
\skew(\alpha C)^{\ast}=-\left(\left(C-\sym(C)\right)\alpha+\frac{1}{2}\overline{[\alpha,C]}\right).
\]
Hence, it suffices to prove that 
\[
\left(C-\sym(C)\right)\alpha+\frac{1}{2}\overline{[\alpha,C]}=\skew(\alpha C).
\]

By Theorem \ref{thm:weak-strong-accretive} we get that 
\[
C\alpha+\overline{[\alpha,C]}=\overline{\alpha C}.
\]
Hence, using formula (\ref{eq:skew}) we need to show that 
\[
-\sym(C)\alpha-\frac{1}{2}\overline{[\alpha,C]}=-\alpha\sym(C)-\frac{1}{2}\overline{[C^{\ast},\alpha]}.
\]
Since the operators on both sides are all bounded, it suffices to
check this equality on a dense set, say $\dom(C).$ However, on $\dom(C)$
we have 
\begin{align*}
\sym(C)\alpha+\frac{1}{2}\overline{[\alpha,C]} & =\frac{1}{2}(C\alpha+C^{\ast}\alpha)+\frac{1}{2}(\alpha C-C\alpha)\\
 & =\frac{1}{2}\alpha C+\frac{1}{2}C^{\ast}\alpha\\
 & =\alpha\sym(C)+\frac{1}{2}(C^{\ast}\alpha-\alpha C^{\ast})\\
 & =\alpha\sym(C)+\frac{1}{2}\overline{[C^{\ast},\alpha]},
\end{align*}
which shows the claim.
\end{proof}

\subsection{Transmutators and Sums of Operators}
\begin{lem}
\label{lem:adjoint_transmut}Let $C:\dom(C)\subseteq H_{0}\to H_{1}$
and $D:\dom(D)\subseteq H_{0}\to H_{1}$ two densely defined closed
linear operators such that $\dom(C)\cap\dom(D)$ is dense in $H_{0}$.
Let $R\in L(H_{0}),L\in L(H_{1})$ with $R[\dom(C)\cap\dom(D)]\subseteq\dom(C)\cap\dom(D)$
and $[L,C+D,R]$ is continuous. Then $L^{\ast}[\dom((C+D)^{\ast})]\subseteq\dom((C+D)^{\ast})$
and
\[
[R^{\ast},(C+D)^{\ast},L^{\ast}]\subseteq-[L,C+D,R]^{\ast}.
\]
Moreover, if $\dom((C+D)^{\ast})$ is dense, we obtain 
\[
\overline{[R^{\ast},(C+D)^{\ast},L^{\ast}]}=-[L,C+D,R]^{\ast}.
\]
\end{lem}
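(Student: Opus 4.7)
The plan is to exploit the defining relation of the transmutator,
\[
L(C+D)x = (C+D)Rx + [L,C+D,R]x \qquad (x \in \dom(C)\cap\dom(D)),
\]
which makes sense precisely because of the invariance $R[\dom(C)\cap\dom(D)]\subseteq\dom(C)\cap\dom(D)$. Pairing this identity with an element of $\dom((C+D)^{\ast})$ and moving the operators to the other side via adjoints should yield both assertions of the lemma in one stroke.

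First I would fix $y\in\dom((C+D)^{\ast})$ and, for arbitrary $x\in\dom(C)\cap\dom(D)$, compute
\[
\langle(C+D)x,L^{\ast}y\rangle=\langle L(C+D)x,y\rangle=\langle(C+D)Rx,y\rangle+\langle[L,C+D,R]x,y\rangle.
\]
Because $Rx\in\dom(C+D)$ by the invariance hypothesis and $y\in\dom((C+D)^{\ast})$, the first summand equals $\langle x,R^{\ast}(C+D)^{\ast}y\rangle$. The continuity of $[L,C+D,R]$ guarantees that its adjoint extends uniquely to a bounded operator on all of $H_{1}$, so the second summand equals $\langle x,[L,C+D,R]^{\ast}y\rangle$. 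Reading off the definition of $\dom((C+D)^{\ast})$, this shows $L^{\ast}y\in\dom((C+D)^{\ast})$ with
\[
(C+D)^{\ast}L^{\ast}y = R^{\ast}(C+D)^{\ast}y + [L,C+D,R]^{\ast}y,
\]
which rearranges to $[R^{\ast},(C+D)^{\ast},L^{\ast}]y = -[L,C+D,R]^{\ast}y$. This simultaneously proves the invariance $L^{\ast}[\dom((C+D)^{\ast})]\subseteq\dom((C+D)^{\ast})$ and the inclusion $[R^{\ast},(C+D)^{\ast},L^{\ast}]\subseteq-[L,C+D,R]^{\ast}$.

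For the moreover part, once $\dom((C+D)^{\ast})$ is assumed dense, the transmutator $[R^{\ast},(C+D)^{\ast},L^{\ast}]$ is a densely defined linear operator agreeing on its domain with the everywhere defined bounded operator $-[L,C+D,R]^{\ast}$. Closing a densely defined restriction of a bounded operator recovers the bounded operator itself, whence $\overline{[R^{\ast},(C+D)^{\ast},L^{\ast}]}=-[L,C+D,R]^{\ast}$.

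I do not expect any real obstacle in this argument; the proof is essentially bookkeeping around the adjoint. The two places where care is required are (i) using the invariance $R[\dom(C)\cap\dom(D)]\subseteq\dom(C)\cap\dom(D)$ in order to be entitled to write $(C+D)Rx$ when applying the adjoint relation for $y$, and (ii) using the continuity of the transmutator $[L,C+D,R]$ so that $\langle[L,C+D,R]x,y\rangle=\langle x,[L,C+D,R]^{\ast}y\rangle$ holds without imposing a domain restriction on $y$.
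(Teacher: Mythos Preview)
Your proof is correct and follows essentially the same approach as the paper: pair the transmutator identity against an element of $\dom((C+D)^{\ast})$, move the bounded pieces across via adjoints, and read off both the invariance and the operator inclusion. The only differences are cosmetic (the paper swaps the roles of your $x$ and $y$), and your added remarks on why each step is justified are welcome commentary rather than a departure from the argument.
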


\begin{proof}
Let $x\in\dom((C+D)^{\ast})$ and $y\in\dom(C+D)=\dom(C)\cap\dom(D).$
Then we compute 
\begin{align*}
\langle(C+D)y,L^{\ast}x\rangle & =\langle L(C+D)y,x\rangle\\
 & =\langle[L,C+D,R]y+(C+D)Ry,x\rangle\\
 & =\langle y,[L,C+D,R]^{\ast}x+R^{\ast}(C+D)^{\ast}x\rangle,
\end{align*}
which shows $L^{\ast}x\in\dom((C+D)^{\ast})$ and 
\[
(C+D)^{\ast}L^{\ast}x=[L,C+D,R]^{\ast}x+R^{\ast}(C+D)^{\ast}x.
\]
In other words 
\[
[R^{\ast},(C+D)^{\ast},L^{\ast}]\subseteq-[L,C+D,R]^{\ast}.
\]
If additionally, $\dom((C+D)^{\ast})$ is dense the continuity of
the right-hand side yields
\[
\overline{[R^{\ast},(C+D)^{\ast},L^{\ast}]}=-[L,C+D,R]^{\ast}.\tag*{\qedhere}
\]
 
\end{proof}
\begin{thm}
\label{thm:sum}Let $C:\dom(C)\subseteq H_{0}\to H_{1}$ and $D:\dom(D)\subseteq H_{0}\to H_{1}$
two densely defined closed linear operators such that $\dom(C)\cap\dom(D)$
is dense in $H_{0}$. Moreover, let $L_{n}\in L(H_{1}),R_{n}\in L(H_{0})$
such that $R_{n}[\dom(C)\cap\dom(D)]\subseteq\dom(C)\cap\dom(D)$
and 
\[
[L_{n},C+D,R_{n}]
\]
is continuous for each $n\in\N$. If $\left(C+D\right)^{*}L_{n}^{*}=\left(C^{*}+D^{*}\right)L_{n}^{*}$
for each $n\in\N$ and $R_{n}^{*}\overset{s}{\to}1$, $L_{n}^{*}\overset{s}{\to}1$
as well as 
\[
\left[L_{n},C+D,R_{n}\right]^{*}\overset{s}{\to}0,
\]
as $n\to\infty,$ we obtain
\[
\overline{C^{*}+D^{*}}=\left(C+D\right)^{*}.
\]
\end{thm}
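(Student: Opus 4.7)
The plan is to establish the two inclusions separately. The easy direction $\overline{C^*+D^*}\subseteq (C+D)^*$ comes from a direct pairing calculation: for $x\in\dom(C^*)\cap\dom(D^*)$ and $y\in\dom(C)\cap\dom(D)=\dom(C+D)$ one has $\langle (C+D)y,x\rangle=\langle y,(C^*+D^*)x\rangle$, so $C^*+D^*\subseteq (C+D)^*$, and since $(C+D)^*$ is closed, the closure of the left-hand side is also contained in $(C+D)^*$.

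For the hard direction, the goal is to show that every $x\in\dom((C+D)^*)$ lies in $\dom(\overline{C^*+D^*})$ and that $\overline{C^*+D^*}\,x=(C+D)^*x$. The natural candidate for an approximating sequence is $x_n:=L_n^*x$. Lemma \ref{lem:adjoint_transmut} guarantees that $L_n^*$ maps $\dom((C+D)^*)$ into itself, so the standing hypothesis $(C+D)^*L_n^*=(C^*+D^*)L_n^*$ forces $x_n\in\dom(C^*)\cap\dom(D^*)=\dom(C^*+D^*)$. The strong convergence $L_n^*\overset{s}{\to}1$ immediately yields $x_n\to x$ in $H_1$.

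The decisive step is the convergence of $(C^*+D^*)x_n$ to $(C+D)^*x$. From the transmutator inclusion in Lemma \ref{lem:adjoint_transmut}, $[R_n^*,(C+D)^*,L_n^*]\subseteq -[L_n,C+D,R_n]^*$, evaluated on $\dom((C+D)^*)$ it reads
\[
(C+D)^*L_n^*x = R_n^*(C+D)^*x + [L_n,C+D,R_n]^*x.
\]
Combining this with the hypothesis $(C^*+D^*)L_n^*=(C+D)^*L_n^*$, one obtains
\[
(C^*+D^*)x_n = R_n^*(C+D)^*x + [L_n,C+D,R_n]^*x,
\]
and the two assumptions $R_n^*\overset{s}{\to}1$ and $[L_n,C+D,R_n]^*\overset{s}{\to}0$ deliver the convergence $(C^*+D^*)x_n\to (C+D)^*x$. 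The closedness of $\overline{C^*+D^*}$ then places $x$ in its domain with the asserted value, completing the reverse inclusion.

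The main obstacle, conceptually, is recognizing that the seemingly technical hypothesis $(C+D)^*L_n^*=(C^*+D^*)L_n^*$ is precisely what is needed to push $L_n^*x$ from the a priori larger space $\dom((C+D)^*)$ into the smaller space $\dom(C^*)\cap\dom(D^*)$ where $C^*+D^*$ can be evaluated. Once this is in place, the transmutator identity of Lemma \ref{lem:adjoint_transmut} and the three strong-convergence assumptions fit together exactly to produce the approximation; no further regularisation or cut-off argument is required.
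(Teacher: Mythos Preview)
Your proof is correct and follows essentially the same route as the paper: both establish the trivial inclusion $\overline{C^*+D^*}\subseteq(C+D)^*$ directly, then for $x\in\dom((C+D)^*)$ use the approximants $L_n^*x$, invoke Lemma~\ref{lem:adjoint_transmut} together with the hypothesis $(C+D)^*L_n^*=(C^*+D^*)L_n^*$ to land in $\dom(C^*+D^*)$, and pass to the limit via the identity $(C^*+D^*)L_n^*x=R_n^*(C+D)^*x+[L_n,C+D,R_n]^*x$ and the three strong-convergence assumptions.
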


\begin{proof}
Since always $C^{\ast}+D^{\ast}\subseteq(C+D)^{\ast}$ it follows
that $\overline{C^{\ast}+D^{\ast}}\subseteq(C+D)^{\ast}.$ To prove
the remaining inclusion let $v\in\dom((C+D)^{\ast})$. Then $L_{n}^{*}v\to v$
as $n\to\infty$ and by Lemma \ref{lem:adjoint_transmut} we obtain
\begin{align*}
\left(C^{*}+D^{*}\right)L_{n}^{*}v & =\left(C+D\right)^{*}L_{n}^{\ast}v\\
 & =R_{n}^{\ast}(C+D)^{\ast}v-[R_{n}^{\ast},(C+D)^{\ast},L_{n}^{\ast}]v\\
 & =R_{n}^{\ast}(C+D)^{\ast}v+[L_{n},(C+D),R_{n}]^{\ast}v\\
 & \to(C+D)^{\ast}v.
\end{align*}
The latter gives $v\in\dom(\overline{C^{\ast}+D^{\ast}})$ and thus,
the assertion follows.
\end{proof}

\section{An Application to Maxwell's and Acoustics Equations with Drift Term\label{sec:An-Application-to}}

\subsection{Lie Derivative and Co-Variant Derivative}

Throughout, let $(M,g)$ be a smooth Riemannian manifold of odd dimension
$n$ and $\Omega\subseteq M$ open. We begin to define the space of
tensor fields on $\Omega$.
\begin{defn*}
Let $k,\ell\in\N$.

\begin{enumerate}[(a)] 

\item Let $p\in\Omega$. By $T_{\ell}^{k}(T_{p}M)$ we denote the
set of all tensors of type $(k,\ell)$ on the tangent space $T_{p}M$,
i.e. 
\[
T_{\ell}^{k}(T_{p}M)=\bigotimes_{j=1}^{k}(T_{p}M)'\otimes\bigotimes_{i=1}^{\ell}T_{p}M.
\]
\item A function $X:\Omega\to\bigcup_{p\in\Omega}T_{p}M$ with $X(p)\in T_{p}M$
for each $p\in\Omega$ is called a \emph{vectorfield on $\Omega$},
if for all $f\in C^{\infty}(\Omega)$ the mapping 
\[
\Omega\ni p\mapsto X(p)(f)\in\R
\]
is smooth. We denote by $T_{1}^{0}(\Omega)$ the set of all vectorfields
on $\Omega.$

\item A function $Y:\Omega\to\bigcup_{p\in\Omega}(T_{p}M)'$ with
$Y(p)\in\left(T_{p}M\right)'$ for each $p\in\Omega$ is called a
\emph{covectorfield on $\Omega$}, if for all $X\in T_{1}^{0}(\Omega)$
the mapping 
\[
\Omega\ni p\mapsto Y(p)(X(p))\in\R
\]
is smooth. We denote by $T_{0}^{1}(\Omega)$ the set of all covectorfields
on $\Omega.$

\item A function $T:\Omega\to\bigcup_{p\in\Omega}T_{\ell}^{k}(T_{p}M)$
with $T(p)\in T_{\ell}^{k}(T_{p}M)$ for each $p\in\Omega$ is called
a \emph{tensorfield of type $(k,\ell)$ on $\Omega$}, if for all
$X_{1},\ldots,X_{k}\in T_{1}^{0}(\Omega)$ and all $Y_{1},\ldots,Y_{\ell}\in T_{0}^{1}(\Omega)$
the mapping 
\[
\Omega\ni p\mapsto T(p)(X_{1}(p),\ldots,X_{k}(p),Y_{1}(p),\ldots,Y_{\ell}(p))\in\R
\]
is smooth. We denote by $T_{\ell}^{k}(\Omega)$ the set of all tensorfields
of type $(k,\ell)$ on $\Omega.$

\item A tensorfield $T$ of type $(k,0)$ on $\Omega$ is called
a \emph{$k$-form on $\Omega$}, if for all $p\in\Omega$ 
\[
T(p)\in\Lambda^{k}(T_{p}M),
\]
where 
\[
\Lambda^{k}(T_{p}M)\coloneqq\{f\in T_{0}^{k}(T_{p}M)\,;\,f\text{ alternating}\}.
\]
The set of $k$-forms on $\Omega$ is denoted by $\Lambda^{k}(\Omega).$ 

\end{enumerate}
\end{defn*}
\begin{rem}
Note that $\Lambda^{k}(\Omega)=\{0\}$ for each $k>n$. 
\end{rem}

Since we want to employ the framework of evolutionary equations, see
\prettyref{sec:Some-Hilbert-Space}, we need to define a suitable
Hilbert space structure. This is done as follows.
\begin{defn*}
Let $k\in\N.$ We define the set $L_{2}^{k}(\Omega)$ as the completion
of 
\[
\{T\in\Lambda^{k}(\Omega)\,;\,\int_{\Omega}\|T(p)\|_{\Lambda^{k}(T_{p}M)}^{2}\d V(p)<\infty\}
\]
with respect to the norm 
\[
T\mapsto\left(\int_{\Omega}\|T(p)\|_{\Lambda^{k}(T_{p}M)}^{2}\d V(p)\right)^{1/2}.
\]
Here $V$ denotes the volume element of the Riemannian manifold $M.$ 
\end{defn*}
\begin{rem}
For $k\in\N$ we have 
\[
L_{2}^{k}(\Omega)=\overline{\Lambda_{c}^{k}(\Omega)},
\]
where 
\[
\Lambda_{c}^{k}(\Omega)\coloneqq\{T\in\Lambda^{k}(\Omega)\,;\,T\text{ compactly supported}\}.
\]
\end{rem}

We first inspect three different differentiation operators on $L_{2}^{k}(\Omega)$
and their relations, namely the Lie-derivative, the exterior derivative
and the covariant derivative. First, we define the operators to be
considered.
\begin{defn*}
Let $X\in T_{1}^{0}(\Omega)$ a vector field, $k\in\N$.

\begin{enumerate}[(a)]

\item We define the operator $\mathcal{L}_{X,c}$ by 
\[
\mathcal{L}_{X,c}:\Lambda_{c}^{k}(\Omega)\subseteq L_{2}^{k}(\Omega)\to L_{2}^{k}(\Omega),\quad T\mapsto\mathcal{L}_{X}T,
\]
where $\mathcal{L}_{X}$ denotes the Lie-derivative on $\Lambda^{k}(\Omega)$
in direction $X$. 

\item We define the operator $\nabla_{X,c}$ by 
\[
\nabla_{X,c}:\Lambda_{c}^{k}(\Omega)\subseteq L_{2}^{k}(\Omega)\to L_{2}^{k}(\Omega),\quad T\mapsto\nabla_{X}T,
\]
where $\mathcal{\nabla}_{X}$ denotes the covariant derivative on
$\Lambda^{k}(\Omega)$ in direction $X$.

\item We define the operator $d_{c}$ by 
\[
d_{c}:\Lambda_{c}^{k}(\Omega)\subseteq L_{2}^{k}(\Omega)\to L_{2}^{k+1}(\Omega),\;T\mapsto dT,
\]
where $d$ denotes the exterior derivative on $\Lambda^{k}(\Omega)$. 

\end{enumerate}
\end{defn*}
\begin{prop}
\label{prop:adjoint_derivative}Let $X\in T_{1}^{0}(\Omega).$ Then
\begin{align*}
-\ast\mathcal{L}_{X,c}\ast & \subseteq\mathcal{L}_{X,c}^{\ast},\\
-\nabla_{X,c} & \subseteq\nabla_{X,c}^{\ast},
\end{align*}
where $\ast:L_{2}^{k}(\Omega)\to L_{2}^{n-k}(\Omega)$ denotes the
Hodge-star operator. Moreover
\[
\left(-1\right)^{k+1}\ast d_{c}\ast\subseteq d_{c}^{\ast}
\]
as an operator from $L_{2}^{k+1}(\Omega)\to L_{2}^{k}(\Omega).$
\end{prop}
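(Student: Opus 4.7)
All three inclusions are Green's-type integration-by-parts identities for compactly supported smooth forms on $\Omega$, and the overall strategy in each case is identical: pair the operator applied to a compactly supported $T$ with a suitable test form, apply a Leibniz rule appropriate to the derivative in question, recognise the resulting top-degree form as an exact $n$-form of compact support, and invoke Stokes' theorem to conclude that its integral over $\Omega$ vanishes. The bridge from wedge integrals back to $L_{2}^{k}(\Omega)$ inner products is the Hodge identity $\alpha\wedge\ast\beta=\langle\alpha,\beta\rangle\,\d V$ for $k$-forms, together with the involution $\ast\ast=(-1)^{k(n-k)}\mathrm{id}$. Because $n$ is odd by assumption, $k(n-k)$ is always even, so the sign $(-1)^{k(n-k)}$ collapses to $+1$; this is precisely what produces the clean signs $-1$ and $(-1)^{k+1}$ on the right-hand sides without residual dimension-dependent factors.

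For the exterior derivative inclusion, I would take $T\in\Lambda_{c}^{k}(\Omega)$ and $S\in\Lambda_{c}^{k+1}(\Omega)$ and start from the graded Leibniz rule $d(T\wedge\ast S)=dT\wedge\ast S+(-1)^{k}T\wedge d(\ast S)$. Integrating over $\Omega$ and applying Stokes gives $\int_{\Omega}dT\wedge\ast S=(-1)^{k+1}\int_{\Omega}T\wedge d(\ast S)$, and converting both sides to $L_{2}$ inner products via the Hodge identity yields $\langle dT,S\rangle=\langle T,(-1)^{k+1}\ast d\ast S\rangle$, which is the claimed inclusion.

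For the Lie derivative inclusion, for $T,S\in\Lambda_{c}^{k}(\Omega)$ the form $T\wedge\ast S$ is of top degree, so $d(T\wedge\ast S)=0$ trivially and Cartan's magic formula reduces to $\mathcal{L}_{X}(T\wedge\ast S)=d(\iota_{X}(T\wedge\ast S))$. Its integral over $\Omega$ vanishes by Stokes, and combining this with the product rule $\mathcal{L}_{X}(T\wedge\ast S)=(\mathcal{L}_{X}T)\wedge\ast S+T\wedge\mathcal{L}_{X}(\ast S)$ produces $\langle\mathcal{L}_{X}T,S\rangle=-\langle T,\ast\mathcal{L}_{X}\ast S\rangle$.

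For the covariant derivative inclusion, I would use the metric compatibility of the Levi-Civita connection extended to $\Lambda^{k}(T^{*}M)$: the smooth function $f\coloneqq\langle T,S\rangle$ on $\Omega$ satisfies the pointwise identity $X(f)=\langle\nabla_{X}T,S\rangle+\langle T,\nabla_{X}S\rangle$. Integrating against $\d V$ and transferring $X$ off $f$ via the compactly supported divergence identity (itself Stokes applied to $f\,\iota_{X}\,\d V$) delivers the desired adjoint relation. The only real technical point across all three parts is the preservation of compact support: one must verify that $d$, $\mathcal{L}_{X}$, and $\nabla_{X}$ all map $\Lambda_{c}^{k}(\Omega)$ into compactly supported forms in $\Omega$, so that the boundary contribution in Stokes really does vanish and the formal pointwise identities upgrade to genuine adjoint inclusions of densely defined operators on $L_{2}^{k}(\Omega)$.
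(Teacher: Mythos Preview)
Your arguments for the exterior derivative and the Lie derivative are correct and are exactly the paper's: the graded Leibniz rule plus Stokes for $d_{c}$, and the product rule for $\mathcal{L}_{X}$ combined with Cartan's formula to recognise $\mathcal{L}_{X}(S\wedge\ast T)=d\iota_{X}(S\wedge\ast T)$ as an exact compactly supported $n$-form.

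For the covariant derivative, however, your sketch does not actually deliver the stated inclusion, and neither does the paper's proof. Stokes applied to $f\,\iota_{X}\,\d V$ with $f=\langle S,T\rangle$ gives
\[
0=\int_{\Omega}d(f\,\iota_{X}\,\d V)=\int_{\Omega}\mathcal{L}_{X}(f\,\d V)=\int_{\Omega}X(f)\,\d V+\int_{\Omega}f\,(\dive X)\,\d V,
\]
so $\int_{\Omega}X(f)\,\d V=-\int_{\Omega}f\,(\dive X)\,\d V$, which is \emph{not} zero in general. Combined with metric compatibility this yields $\langle\nabla_{X}S,T\rangle=\langle S,-\nabla_{X}T-(\dive X)T\rangle$, i.e.\ the formal adjoint is $-\nabla_{X,c}-\dive X$, not $-\nabla_{X,c}$. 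The paper's shortcut, identifying $\int_{\Omega}\nabla_{X}\langle S,T\rangle\,\d V$ with $\int_{\Omega}\mathcal{L}_{X}(S\wedge\ast T)$, makes exactly the same omission: $\mathcal{L}_{X}(S\wedge\ast T)=\mathcal{L}_{X}(\langle S,T\rangle\,\d V)=X(\langle S,T\rangle)\,\d V+\langle S,T\rangle\,(\dive X)\,\d V$, and the second summand is dropped without justification. So the inclusion $-\nabla_{X,c}\subseteq\nabla_{X,c}^{\ast}$ as stated holds only for divergence-free $X$; in general one gets $-\nabla_{X,c}-\dive X\subseteq\nabla_{X,c}^{\ast}$. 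This does not damage the downstream arguments (only closability of $\nabla_{X,c}$ and boundedness of $\mathcal{L}_{X}-\nabla_{X}$ are used later, and both survive the extra bounded multiplication term), but it is a genuine gap in the proof of the proposition as written.
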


\begin{proof}
Let $S,T\in\Lambda_{c}^{k}(\Omega).$ Then we have, using the Hodge
star operator $\ast:L_{2}^{k}(\Omega)\to L_{2}^{n-k}(\Omega)$,
\begin{align*}
\langle\mathcal{L}_{X,c}S,T\rangle_{L_{2}^{k}(\Omega)} & =\int_{\Omega}\left(\mathcal{L}_{X}S\right)\wedge(\ast T)\d V\\
 & =\int_{\Omega}\mathcal{L}_{X}\left(S\wedge(\ast T)\right)-S\wedge\left(\mathcal{L}_{X}\ast T\right)\d V\\
 & =\langle S,-\ast\mathcal{L}_{X,c}\ast T\rangle_{L_{2}^{k}(\Omega)},
\end{align*}
which proves 
\[
-\ast\mathcal{L}_{X,c}\ast\subseteq\mathcal{L}_{X,c}^{\ast}.
\]
Here we have used Cartan's formula and Stokes' Theorem to compute
\begin{align*}
\int_{\Omega}\mathcal{L}_{X}\left(S\wedge(\ast T)\right)\d V & =\int_{\Omega}(d\iota_{X}+\iota_{X}d)(S\wedge(\ast T))\d V\\
 & =\int_{\Omega}d\iota_{X}(S\wedge(\ast T))\d V\\
 & =0,
\end{align*}
since $\iota_{X}(S\wedge(\ast T))\in\Lambda_{c}^{n-1}(\Omega)$, where
$\iota_{X}$ denotes the interior derivative in direction $X$. Similarly,
we compute 
\begin{align*}
\langle\nabla_{X,c}S,T\rangle_{L_{2}^{k}(\Omega)} & =\int_{\Omega}\nabla_{X,c}\langle S,T\rangle-\langle S,\nabla_{X,c}T\rangle\d V\\
 & =\int_{\Omega}\mathcal{L}_{X,c}(S\wedge(\ast T))\d V-\langle S,\nabla_{X,c}T\rangle_{L_{2}^{k}(\Omega)}\\
 & =\langle S,-\nabla_{X,c}T\rangle_{L_{2}^{k}(\Omega)},
\end{align*}
i.e. 
\[
-\nabla_{X,c}\subseteq\nabla_{X,c}^{\ast}.
\]
Finally, we compute for $S\in\Lambda_{c}^{k}(\Omega)$ and $T\in\Lambda_{c}^{k+1}(\Omega)$
\begin{align*}
\langle d_{c}S,T\rangle_{L_{2}^{k+1}(\Omega)} & =\int_{\Omega}(dS)\wedge(\ast T)\d V\\
 & =\int_{\Omega}d(S\wedge(\ast T))\d V-(-1)^{k}\int_{\Omega}S\wedge(d(\ast T))\d V\\
 & =\langle S,(-1)^{k+1}\ast d\ast T\rangle_{L_{2}^{k}(\Omega)}.
\end{align*}
\end{proof}
The latter proposition shows that each of the operators $\mathcal{L}_{X,c},\nabla_{X,c}$
and $d_{c}$ is closable and hence, we may define the following operators.
\begin{defn*}
Let $X\in T_{1}^{0}(M).$ We set 
\begin{align*}
\interior{\mathcal{L}}_{X} & \coloneqq\overline{\mathcal{L}_{X,c}},\quad\mathcal{L}_{X}\coloneqq-\ast\mathcal{L}_{X,c}^{\ast}\ast,\\
\interior{\nabla}_{X} & \coloneqq\overline{\nabla_{X,c}},\quad\nabla_{X}\coloneqq-\nabla_{X}^{\ast}
\end{align*}
and 
\[
\interior{d}\coloneqq\overline{d_{c}},\quad d\coloneqq(-1)^{n-k}\ast d_{c}^{\ast}\ast
\]
as operators from $L_{2}^{k}(\Omega)$ to $L_{2}^{k+1}(\Omega)$ for
$k\in\N_{<n}.$ 
\end{defn*}
\begin{rem}
By Proposition \ref{prop:adjoint_derivative} we have 
\[
\interior{\mathcal{L}}_{X}\subseteq\mathcal{L}_{X},\:\interior{\nabla}_{X}\subseteq\nabla_{X},\;\interior{d}\subseteq d.
\]
\end{rem}

\begin{lem}
\label{lem:Lie-covariant} Let $X\in T_{1}^{0}(M)$ and assume that
\[
C\coloneqq\sup\left\{ \|\nabla_{Y}X\|_{\infty}\,;\,Y\in T_{1,c}^{0}(\Omega),\:\|Y\|_{\infty}\leq1\right\} <\infty.
\]
Then $\mathcal{L}_{X}-\nabla_{X}$ is continuous. 
\end{lem}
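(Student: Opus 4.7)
The proof plan rests on the classical pointwise identity between the Lie derivative and the covariant derivative of a $k$-form along $X$ on a Riemannian manifold. Because the Levi-Civita connection is torsion-free, $[X,Y] = \nabla_X Y - \nabla_Y X$; substituting this into the standard Leibniz formulas for $\mathcal{L}_X$ and $\nabla_X$ acting on $(0,k)$-tensors, one obtains for any smooth $k$-form $\omega$ and vector fields $Y_1,\ldots,Y_k$ that
\[
(\mathcal{L}_X \omega - \nabla_X \omega)(Y_1,\ldots,Y_k) = \sum_{i=1}^{k} \omega(Y_1,\ldots,\nabla_{Y_i} X,\ldots,Y_k) =: (\Phi_k \omega)(Y_1,\ldots,Y_k).
\]
This expresses the difference as a pointwise, zeroth-order algebraic operator depending linearly on the $(1,1)$-tensor $\nabla X$. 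The hypothesis $C<\infty$ states precisely that $\|\nabla X(p)\|_{T_pM\to T_pM}\leq C$ for $V$-almost every $p\in\Omega$, hence pointwise $\|\Phi_k\omega(p)\|_{\Lambda^k(T_pM)}\leq kC\,\|\omega(p)\|_{\Lambda^k(T_pM)}$, and $\Phi_k$ extends to a bounded operator on $L_2^k(\Omega)$ with $\|\Phi_k\|\leq kC$.

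On the dense subspace $\Lambda_c^k(\Omega)$, using $\ast\ast=1$ (valid because $n$ is odd) together with Proposition \ref{prop:adjoint_derivative}, both $\mathcal{L}_X$ and $\nabla_X$ restrict to the classical $\mathcal{L}_{X,c}$ and $\nabla_{X,c}$, so the identity $\mathcal{L}_X\omega-\nabla_X\omega=\Phi_k\omega$ holds there. To transfer this identity to the full common domain $\dom(\mathcal{L}_X)\cap\dom(\nabla_X)$, I would apply the same pointwise identity on $(n-k)$-forms, obtaining $\mathcal{L}_{X,c}=\nabla_{X,c}+\Phi_{n-k}$ on $\Lambda_c^{n-k}(\Omega)$. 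Since $\Phi_{n-k}$ is bounded, adjoints distribute over the sum to yield $\mathcal{L}_{X,c}^{\ast}=\nabla_{X,c}^{\ast}+\Phi_{n-k}^{\ast}$ on $L_2^{n-k}(\Omega)$.

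A key structural ingredient is that the Levi-Civita connection parallelizes the Hodge star, i.e.\ $\ast\nabla_X=\nabla_X\ast$ on smooth forms; after adjointing and using $\ast\ast=1$, this translates to $-\ast\nabla_{X,c}^{\ast}\ast=\nabla_X$ between the appropriate degrees. Conjugating the adjoint identity by Hodge stars then gives
\[
\mathcal{L}_X = -\ast\mathcal{L}_{X,c}^{\ast}\ast = -\ast\nabla_{X,c}^{\ast}\ast - \ast\Phi_{n-k}^{\ast}\ast = \nabla_X - \ast\Phi_{n-k}^{\ast}\ast,
\]
so $\mathcal{L}_X-\nabla_X=-\ast\Phi_{n-k}^{\ast}\ast$ is bounded on $L_2^k(\Omega)$, proving continuity.

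The main obstacle I expect is the Hodge-star bookkeeping in this last step: since $\mathcal{L}_X$ and $\nabla_X$ are defined through adjoints composed with Hodge stars, one must carefully track signs and the switching between $k$- and $(n-k)$-form degrees when identifying $-\ast\nabla_{X,c}^{\ast}\ast$ with $\nabla_X$. The differential-geometric input --- the pointwise identity and its pointwise bound --- is routine; the functional-analytic extension of the identity from the dense core $\Lambda_c^k(\Omega)$ to the full natural common domain is where the real care is needed.
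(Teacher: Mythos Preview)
Your argument is correct and shares its core with the paper's: both hinge on the torsion-free identity $(\mathcal{L}_X\omega-\nabla_X\omega)(Y_1,\ldots,Y_k)=\sum_i\omega(Y_1,\ldots,\nabla_{Y_i}X,\ldots,Y_k)$, which exhibits the difference as a zeroth-order multiplication operator bounded pointwise by the hypothesis $C<\infty$.

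Where you and the paper diverge is in presentation and in the passage from the core $\Lambda_c^k(\Omega)$ to the maximal operators. The paper proceeds by induction on the form degree, reducing to $k=0$ (trivial) and $k=1$ (the one-line computation $(\mathcal{L}_X\alpha-\nabla_X\alpha)(Y)=\alpha(\nabla_YX)$), and then invokes density. You instead write down the general $k$-form identity directly and, more importantly, justify the extension to $\mathcal{L}_X-\nabla_X$ explicitly: you apply the identity in degree $n-k$, take adjoints, and use that the Levi-Civita connection commutes with the Hodge star to identify $-\ast\nabla_{X,c}^{\ast}\ast$ with $\nabla_X=-\nabla_{X,c}^{\ast}$. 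This extra care is well placed, since $\mathcal{L}_X$ and $\nabla_X$ are defined via adjoints with \emph{different} Hodge-star conjugations (only the Lie derivative carries the $\ast$'s), so a bare ``by density'' does not immediately settle that the bounded difference on $\Lambda_c^k$ persists on the common domain of the maximal operators. Your route makes this transparent; the paper's inductive argument is slightly shorter on the pointwise side but leaves the domain step implicit.
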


\begin{proof}
By density, it suffices to prove that $\mathcal{L}_{X,c}-\nabla_{X,c}$
is continuous on $\Lambda_{c}^{k}(\Omega)$. Moreover, by induction
it suffices to show the assertion for $k=0$ and $k=1$. Since $\mathcal{L}_{X,c}$
and $\nabla_{X,c}$ agree on $\Lambda_{c}^{0}(\Omega)=C_{c}^{\infty}(\Omega),$
there is nothing to show for the case $k=0.$ So, let $\alpha\in\Lambda_{c}^{1}(\Omega).$
We then have for all $Y\in T_{1}^{0}(\Omega)$ with compact support
\begin{align*}
\left((\mathcal{L}_{X,c}\alpha)-(\nabla_{X,c}\alpha)\right)(Y) & =\mathcal{L}_{X}(\alpha(Y))-\alpha(\mathcal{L}_{X}Y)-\nabla_{X}(\alpha(Y))+\alpha(\nabla_{X}Y)\\
 & =\alpha(\nabla_{X}Y-\mathcal{L}_{X}Y)\\
 & =\alpha(\nabla_{Y}X),
\end{align*}
where we have used that $\nabla$ is torsion-free. Hence, 
\begin{align*}
\left\Vert (\mathcal{L}_{X,c}-\nabla_{X,c})(\alpha)\right\Vert _{L_{2}^{1}(\Omega)}^{2} & =\int_{\Omega}\|(\mathcal{L}_{X,c}-\nabla_{X,c})(\alpha)(p)\|_{\Lambda^{1}(T_{p}M)}^{2}\d V(p)\\
 & =\int_{\Omega}\sup_{Z\in T_{p}M,\|Z\|\leq1}\left|\langle(\mathcal{L}_{X,c}-\nabla_{X,c})(\alpha)(p),Z\rangle\right|^{2}\d V(p)\\
 & =\sup_{Y\in T_{1,c}^{0}(\Omega),\|Y\|_{\infty}\leq1}\int_{\Omega}|\alpha(\nabla_{Y}X)(p)|^{2}\d V(p)\\
 & \leq\sup_{Y\in T_{1,c}^{0}(\Omega),\|Y\|_{\infty}\leq1}\int_{\Omega}\|\alpha(p)\|^{2}\|\nabla_{Y}X\|_{\infty}^{2}\d V(p)\\
 & =C^{2}\|\alpha\|_{L_{2}^{1}(\Omega)}^{2},
\end{align*}
which shows the claim. 
\end{proof}
~
\begin{lem}
\label{lem:exterior_Lie_commute}Let $X\in T_{1}^{0}(M).$ It is
\[
d\mathcal{L}_{X}=\mathcal{L}_{X}d
\]
on $\Lambda^{k}(\Omega)$.
\end{lem}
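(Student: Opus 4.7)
The plan is to invoke Cartan's magic formula, which was already used in the proof of Proposition \ref{prop:adjoint_derivative}. Recall that for any vector field $X$ on $M$ and any smooth form the Lie derivative decomposes as
\[
\mathcal{L}_{X}=d\iota_{X}+\iota_{X}d
\]
on $\Lambda^{k}(\Omega)$, where $\iota_{X}$ denotes interior multiplication. Combined with $d\circ d=0$, this identity makes the lemma essentially a one-line calculation, so the strategy is simply to write both sides in terms of $\iota_{X}$ and $d$ and to compare them.

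First I would apply Cartan's formula and use $d^{2}=0$ to compute, on $\Lambda^{k}(\Omega)$,
\[
d\,\mathcal{L}_{X}=d(d\iota_{X}+\iota_{X}d)=d\iota_{X}d,
\]
and likewise
\[
\mathcal{L}_{X}\,d=(d\iota_{X}+\iota_{X}d)\,d=d\iota_{X}d.
\]
Comparing the two expressions yields the claim $d\mathcal{L}_{X}=\mathcal{L}_{X}d$ on $\Lambda^{k}(\Omega)$.

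There is essentially no obstacle here: the statement is made pointwise in the smooth category on $\Lambda^{k}(\Omega)$ (no closures or adjoints are involved yet), and both Cartan's formula and $d^{2}=0$ are standard facts about smooth differential forms on a manifold. If one prefers to avoid appealing to Cartan's formula as a black box, one could alternatively verify the identity on $0$-forms and exact $1$-forms (where $d\mathcal{L}_{X}f=d(Xf)=\mathcal{L}_{X}df$ follows directly from the definitions), and then extend by the Leibniz rule for both $d$ and $\mathcal{L}_{X}$ together with the fact that every $k$-form is locally a sum of wedge products of $0$-forms and exact $1$-forms. The Cartan-formula argument is considerably shorter and will be the one to present.
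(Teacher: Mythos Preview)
Your proof is correct and follows exactly the same route as the paper: apply Cartan's magic formula $\mathcal{L}_{X}=d\iota_{X}+\iota_{X}d$ together with $d^{2}=0$ to reduce both $d\mathcal{L}_{X}$ and $\mathcal{L}_{X}d$ to $d\iota_{X}d$. The optional alternative via $0$-forms, exact $1$-forms and the Leibniz rule is not in the paper, but the primary argument you give matches it verbatim.
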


\begin{proof}
We apply Cartan's magic formula stating that 
\[
\mathcal{L}_{X}=d\iota_{X}+\iota_{X}d
\]
on $\Lambda^{k}(\Omega).$ Thus, 
\begin{align*}
d\mathcal{L}_{X} & =dd\iota_{X}+d\iota_{X}d\\
 & =d\iota_{X}d\\
 & =(d\iota_{X}+\iota_{X}d)d\\
 & =\mathcal{L}_{X}d
\end{align*}
on $\Lambda^{k}(\Omega)$.
\end{proof}
\begin{prop}
\label{prop:Lie_exterior}Let $\mathcal{L}_{X}$ be quasi-skew-selfadjoint.
Moreover, let $\eta\in\R$ with $|\eta|$ small enough and assume
that 
\[
\left(1+\eta\mathcal{L}_{X}\right)\left[\Lambda^{k}(\Omega)\cap\dom(\mathcal{L}_{X})\cap\dom(d)\right]\cap\dom(\interior{d})\text{ is dense in }\dom(\interior d).
\]
Then 
\[
\left(1+\eta\mathcal{L}_{X}\right)^{-1}\interior d\subseteq\interior d\left(1+\eta\mathcal{L}_{X}\right)^{-1}.
\]
\end{prop}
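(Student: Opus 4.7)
The approach is to leverage the pointwise commutation $d\mathcal{L}_{X}=\mathcal{L}_{X}d$ on smooth forms (\prettyref{lem:exterior_Lie_commute}) together with the density hypothesis, in order to lift the commutation from the smooth level up to the closed operators $(1+\eta\mathcal{L}_{X})^{-1}$ and $\interior d$.

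First I would check that the resolvent actually makes sense. Since $\mathcal{L}_{X}$ is quasi-skew-selfadjoint, the remark preceding \prettyref{thm:quasi-skewselfadjoint} together with \prettyref{lem:quasi-m-accretive_ex} shows that $\mathcal{L}_{X}$ is quasi-$m$-accretive; the same argument applied to $-\mathcal{L}_{X}$ (which is also quasi-skew-selfadjoint) yields the resolvent bound on the other side. Hence $(1+\eta\mathcal{L}_{X})^{-1}\in L(L_{2}^{k}(\Omega))$ for every $\eta$ with $|\eta|$ sufficiently small, and the statement is at least meaningful.

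Next I fix $u\in\dom(\interior d)$ and invoke the density hypothesis: pick smooth forms $v_{n}\in\Lambda^{k}(\Omega)\cap\dom(\mathcal{L}_{X})\cap\dom(d)$ such that $w_{n}\coloneqq(1+\eta\mathcal{L}_{X})v_{n}$ belongs to $\dom(\interior d)$ and satisfies $w_{n}\to u$ and $\interior d w_{n}\to\interior d u$. Writing $v\coloneqq(1+\eta\mathcal{L}_{X})^{-1}u$, continuity of the resolvent gives $v_{n}=(1+\eta\mathcal{L}_{X})^{-1}w_{n}\to v$. Because each $v_{n}$ is smooth, \prettyref{lem:exterior_Lie_commute} applies pointwise and gives
\[
(1+\eta\mathcal{L}_{X})\,dv_{n} \;=\; d(1+\eta\mathcal{L}_{X})v_{n} \;=\; d w_{n} \;=\; \interior d w_{n},
\]
where the last equality uses $\interior d\subseteq d$ (so $d$ and $\interior d$ agree on $\dom(\interior d)$). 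Applying $(1+\eta\mathcal{L}_{X})^{-1}$ and using its continuity,
\[
dv_{n} \;=\; (1+\eta\mathcal{L}_{X})^{-1}\interior d w_{n} \;\longrightarrow\; (1+\eta\mathcal{L}_{X})^{-1}\interior d u.
\]

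The desired conclusion would then follow by applying closedness of $\interior d$ to the pair $(v_{n},dv_{n})$. The main obstacle, however, is precisely here: closedness of $\interior d$ (rather than only the maximal $d$) is available on this pair only if each $v_{n}$ itself lies in $\dom(\interior d)$, so that $dv_{n}=\interior d v_{n}$. Since $v_{n}$ is a smooth form with $(1+\eta\mathcal{L}_{X})v_{n}\in\dom(\interior d)$, this membership should follow from the density hypothesis itself — either by strengthening it so that the $v_{n}$ can be chosen inside $\dom(\interior d)$ directly, or by graph-approximating each $w_{n}$ by elements of $\Lambda_{c}^{k}(\Omega)$ and transporting those approximations back through $(1+\eta\mathcal{L}_{X})^{-1}$ in a way that is compatible with the flow of $X$ preserving the boundary behaviour encoded by $\interior d$. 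Once $v_{n}\in\dom(\interior d)$ is established, closedness of $\interior d$ yields $v\in\dom(\interior d)$ and $\interior d v=(1+\eta\mathcal{L}_{X})^{-1}\interior d u$, which is exactly the inclusion $(1+\eta\mathcal{L}_{X})^{-1}\interior d\subseteq\interior d(1+\eta\mathcal{L}_{X})^{-1}$.
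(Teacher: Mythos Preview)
Your argument follows the paper's proof essentially line for line: invoke quasi-$m$-accretivity to justify the resolvent, use \prettyref{lem:exterior_Lie_commute} to get $d(1+\eta\mathcal{L}_{X})^{-1}=(1+\eta\mathcal{L}_{X})^{-1}d$ on the smooth preimages, approximate an arbitrary $u\in\dom(\interior d)$ by the dense set from the hypothesis, and close up. There is no alternative idea in the paper that you are missing.

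Concerning the obstacle you single out --- why $v_{n}=(1+\eta\mathcal{L}_{X})^{-1}w_{n}$ lies in $\dom(\interior d)$ rather than merely $\dom(d)$ --- the paper does \emph{not} supply an argument for this. It simply writes $\interior d(1+\eta\mathcal{L}_{X})^{-1}\alpha_{n}=(1+\eta\mathcal{L}_{X})^{-1}\interior d\alpha_{n}$ and invokes closedness of $\interior d$, exactly the step you hesitate over. If you look at how the density hypothesis is actually verified in the cylindrical examples (Propositions~\ref{prop:staright_tube} and the following one), you will see that something stronger is in fact established there: the set $\interior z(\Omega)=(1+\eta\mathcal{L}_{e_{3}})^{-1}[C_{c}^{\infty}(\Omega)]$ is shown to sit inside $\dom(\interior d)$, so the preimages $v_{n}$ themselves belong to $\dom(\interior d)$ by construction. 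In other words, your proposed fix --- arranging that the $v_{n}$ can be chosen in $\dom(\interior d)$ --- is precisely what happens in practice, even though the abstract statement of the proposition does not make this explicit. Your diagnosis is therefore accurate, and your write-up is, if anything, more careful than the paper's on this point.
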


\begin{proof}
Since $\mathcal{L}_{X}$ is quasi-skew-selfadjoint and hence quasi-m-accretive,
$(1+\varepsilon\mathcal{L}_{X})^{-1}$ defines a bounded operator
on $L_{2}^{k}(\Omega)$ for $\varepsilon>0$ small enough. \\
Let now $\alpha\in\Lambda^{k}(\Omega)\cap\dom(\mathcal{L}_{X})\cap\dom(d)$
such that $(1+\eta\mathcal{L}_{X})\alpha\in\dom(d)$. By Lemma \ref{lem:exterior_Lie_commute}
we have that 
\begin{align*}
 & d(1+\eta\mathcal{L}_{X})^{-1}(1+\eta\mathcal{L}_{X})\alpha-(1+\eta\mathcal{L}_{X})^{-1}d(1+\eta\mathcal{L}_{X})\alpha\\
= & d\alpha-d\alpha=0,
\end{align*}
i.e. 
\[
d(1+\eta\mathcal{L}_{X})^{-1}-(1+\eta\mathcal{L}_{X})^{-1}d=0
\]
on $(1+\eta\mathcal{L}_{X})[\Lambda^{k}(\Omega)\cap\dom(\mathcal{L}_{X})\cap\dom(d)]\cap\dom(d).$
Let now $\alpha\in\dom(\interior{d})$ and $(\alpha_{n})_{n}$ in
$(1+\eta\mathcal{L}_{X})[\Lambda^{k}(\Omega)\cap\dom(\mathcal{L}_{X})\cap\dom(d)]\cap\dom(\interior{d})$
with $\alpha_{n}\to\alpha$ in $\dom(\interior{d}).$ Then 
\[
\interior{d}(1+\eta\mathcal{L}_{X})^{-1}\alpha_{n}=(1+\eta\mathcal{L}_{X})^{-1}\interior{d}\alpha_{n}\to(1+\eta\mathcal{L}_{X})^{-1}\interior{d}\alpha
\]
as $n\to\infty.$ Since $(1+\eta\mathcal{L}_{X})^{-1}\alpha_{n}\to(1+\eta\mathcal{L}_{X})^{-1}\alpha$
as $n\to\infty,$ we infer that $(1+\eta\mathcal{L}_{X})^{-1}\alpha\in\dom(\interior{d})$
and 
\[
\interior{d}(1+\eta\mathcal{L}_{X})^{-1}\alpha=(1+\eta\mathcal{L}_{X})^{-1}\interior{d}\alpha,
\]
which shows the asserted operator inclusion.
\end{proof}

\subsection{The Equations on smooth Riemannian manifolds\label{subsec:The-Equations-on}}

We now come to the equation we want to study. Let $\Omega\subseteq M$
be open for a smooth Riemannian manifold $M$ of odd dimension $n$.
Moreover, let $1\leq k<n$ and set $H\coloneqq\Lambda^{k}(\Omega)\times\Lambda^{k+1}(\Omega).$
We assume that $M_{0},M_{1}\in L(H)$ such that $M_{0}$ is selfadjoint
and $M_{0}\geq c>0$. Moreover, let $X_{0}\in T_{1}^{0}(M)$ and $\alpha\in L_{\infty}(\Omega;\R)$.
We consider the equation 
\begin{equation}
\left(\partial_{0}M_{0}+\alpha\left(\begin{array}{cc}
\nabla_{X_{0}} & 0\\
0 & \nabla_{X_{0}}
\end{array}\right)M_{0}+M_{1}+\left(\begin{array}{cc}
0 & -\interior{d}^{*}\\
\interior{d} & 0
\end{array}\right)\right)U=F\label{eq:prob}
\end{equation}
for given $F\in L_{2,\rho}(\R;H)$ for some $\rho\in\R$ big enough,
where we identify $\alpha$ with its induced multiplication operator
in $H$. We impose the following conditions on the vector field $X_{0}$
and the operators $\alpha$ and $M_{0}.$

\begin{hyp}\label{hyp:standard} We assume that 
\[
\sup\left\{ \|\nabla_{Y}X_{0}\|_{\infty}\,;\,Y\in T_{1,c}^{0}(\Omega),\:\|Y\|_{\infty}\leq1\right\} <\infty
\]
as well as $\mathcal{L}_{x_{0}}$ is quasi-skew-selfadjoint. Moreover,
we assume that $M_{0}[\dom(\mathcal{L}_{X_{0}})\times\dom(\mathcal{L}_{X_{0}})]\subseteq\dom(\mathcal{L}_{X_{0}})\times\dom(\mathcal{L}_{X_{0}})$,
$\alpha[\dom(\mathcal{L}_{X_{0}})]\subseteq\dom(\mathcal{L}_{X_{0}})$
as well as 
\[
\left[\left(\begin{array}{cc}
\mathcal{L}_{X_{0}} & 0\\
0 & \mathcal{L}_{X_{0}}
\end{array}\right),M_{0}\right]\text{ and }\left[\mathcal{L}_{X_{0}},\alpha\right]
\]
are continuous. Finally, we assume that 
\[
(1\pm\epsilon\mathcal{L}_{X})[\Lambda^{k}(\Omega)\cap\dom(\mathcal{L}_{X})\cap\dom(d)]\cap\dom(\interior{d})\text{ is dense in }\dom(\interior d)
\]
for all $\varepsilon>0$ small enough and that $M_{0}\alpha=\alpha M_{0}$.

\end{hyp}

Under this hypotheses we observe the following operator relations.
\begin{lem}
\label{lem:first-reduction}The operators $\left(\begin{array}{cc}
\nabla_{X_{0}} & 0\\
0 & \nabla_{X_{0}}
\end{array}\right)-\left(\begin{array}{cc}
\mathcal{L}_{X_{0}} & 0\\
0 & -\mathcal{L}_{X_{0}}^{*}
\end{array}\right)$, $\left[\left(\begin{array}{cc}
\mathcal{L}_{X_{0}} & 0\\
0 & -\mathcal{L}_{X_{0}}^{*}
\end{array}\right),M_{0}\right]$ and $[\alpha,\mathcal{L}_{X_{0}}^{\ast}]$ are continuous and densely
defined.
\end{lem}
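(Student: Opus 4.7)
The plan is to reduce everything to the one key identity afforded by quasi-skew-selfadjointness, namely
\[
\mathcal{L}_{X_{0}}^{\ast}=-\mathcal{L}_{X_{0}}+2\sym\mathcal{L}_{X_{0}}\qquad\text{on }\dom(\mathcal{L}_{X_{0}})=\dom(\mathcal{L}_{X_{0}}^{\ast}),
\]
with $\sym\mathcal{L}_{X_{0}}\in L(L_{2}^{k}(\Omega))$, and to the fact that Hypothesis \ref{hyp:standard} (through the uniform bound on $\nabla_{Y}X_{0}$) puts us exactly in the situation of Lemma \ref{lem:Lie-covariant}, so that $\mathcal{L}_{X_{0}}-\nabla_{X_{0}}$ is continuous on $\dom(\mathcal{L}_{X_{0}})\cap\dom(\nabla_{X_{0}})$ on both factors $\Lambda^{k}$ and $\Lambda^{k+1}$.

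With these two ingredients the first claim becomes straightforward block by block. On the top component I just rewrite $\nabla_{X_{0}}-\mathcal{L}_{X_{0}}$ and quote Lemma \ref{lem:Lie-covariant}. On the bottom component I compute, using the displayed identity,
\[
\nabla_{X_{0}}-(-\mathcal{L}_{X_{0}}^{\ast})=\nabla_{X_{0}}-\mathcal{L}_{X_{0}}+2\sym\mathcal{L}_{X_{0}},
\]
which is again continuous by Lemma \ref{lem:Lie-covariant} plus the boundedness of $\sym\mathcal{L}_{X_{0}}$. The dense-domain statement follows because on each factor $\dom(\mathcal{L}_{X_{0}})=\dom(\mathcal{L}_{X_{0}}^{\ast})$ is dense by definition.

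For the matrix commutator I split
\[
\begin{pmatrix}\mathcal{L}_{X_{0}} & 0\\
0 & -\mathcal{L}_{X_{0}}^{\ast}
\end{pmatrix}=\begin{pmatrix}\mathcal{L}_{X_{0}} & 0\\
0 & \mathcal{L}_{X_{0}}
\end{pmatrix}+\begin{pmatrix}0 & 0\\
0 & -2\sym\mathcal{L}_{X_{0}}
\end{pmatrix}
\]
and take the commutator with $M_{0}$. The first summand produces exactly the commutator that is assumed continuous in Hypothesis \ref{hyp:standard}; the second produces a commutator of two bounded operators, hence continuous. For this to make sense as an operator I use that $M_{0}$ preserves $\dom(\mathcal{L}_{X_{0}})\times\dom(\mathcal{L}_{X_{0}})=\dom(\mathcal{L}_{X_{0}})\times\dom(\mathcal{L}_{X_{0}}^{\ast})$ by the hypothesis combined with quasi-skew-selfadjointness. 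The third assertion, $[\alpha,\mathcal{L}_{X_{0}}^{\ast}]$, is handled by the same trick: on $\dom(\mathcal{L}_{X_{0}})$ one has $[\alpha,\mathcal{L}_{X_{0}}^{\ast}]=-[\alpha,\mathcal{L}_{X_{0}}]+2[\alpha,\sym\mathcal{L}_{X_{0}}]$, the first term continuous by hypothesis, the second a commutator of bounded operators.

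The only subtle point, and thus the main thing to be careful about, is the bookkeeping of domains: one must check that on the algebraic level the identities above are carried out on a single dense invariant subspace, so that the differences and commutators are first defined there as densely defined operators and \emph{then} extended by continuity. Quasi-skew-selfadjointness of $\mathcal{L}_{X_{0}}$ together with the invariance assumptions $M_{0}[\dom(\mathcal{L}_{X_{0}})\times\dom(\mathcal{L}_{X_{0}})]\subseteq\dom(\mathcal{L}_{X_{0}})\times\dom(\mathcal{L}_{X_{0}})$ and $\alpha[\dom(\mathcal{L}_{X_{0}})]\subseteq\dom(\mathcal{L}_{X_{0}})$ of Hypothesis \ref{hyp:standard} supply precisely this: the common domain $\dom(\mathcal{L}_{X_{0}})$ is dense, invariant under the relevant bounded operators, and, by quasi-skew-selfadjointness, coincides with $\dom(\mathcal{L}_{X_{0}}^{\ast})$, so that each of the three operators in the statement is a well-defined densely defined operator on which the continuous bounds just derived apply.
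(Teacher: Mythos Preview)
Your proof is correct and follows essentially the same route as the paper: the paper decomposes each of the three operators via the identity $-\mathcal{L}_{X_{0}}^{\ast}=\mathcal{L}_{X_{0}}-(\mathcal{L}_{X_{0}}+\mathcal{L}_{X_{0}}^{\ast})$ (your $\sym\mathcal{L}_{X_{0}}$ up to a factor of $2$) and then invokes Lemma~\ref{lem:Lie-covariant}, quasi-skew-selfadjointness, and the commutator assumptions from Hypothesis~\ref{hyp:standard}, exactly as you do. Your additional remarks on the common dense domain are a welcome elaboration of what the paper leaves implicit.
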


\begin{proof}
We obtain 
\[
\left(\begin{array}{cc}
\nabla_{X_{0}} & 0\\
0 & \nabla_{X_{0}}
\end{array}\right)-\left(\begin{array}{cc}
\mathcal{L}_{X_{0}} & 0\\
0 & -\mathcal{L}_{X_{0}}^{*}
\end{array}\right)=\left(\begin{array}{cc}
\nabla_{X_{0}}-\mathcal{L}_{X_{0}} & 0\\
0 & \nabla_{X_{0}}-\mathcal{L}_{X_{0}}
\end{array}\right)+\left(\begin{array}{cc}
0 & 0\\
0 & \mathcal{L}_{X_{0}}+\mathcal{L}_{X_{0}}^{*}
\end{array}\right),
\]
which is continuous by Lemma \ref{lem:Lie-covariant} and the quasi-skew-selfadjointness
of $\mathcal{L}_{X_{0}}$. For the second operator we compute 
\[
\left[\left(\begin{array}{cc}
\mathcal{L}_{X_{0}} & 0\\
0 & -\mathcal{L}_{X_{0}}^{*}
\end{array}\right),M_{0}\right]=\left[\left(\begin{array}{cc}
\mathcal{L}_{X_{0}} & 0\\
0 & \mathcal{L}_{X_{0}}
\end{array}\right),M_{0}\right]-\left[\left(\begin{array}{cc}
0 & 0\\
0 & \mathcal{L}_{X_{0}}^{\ast}+\mathcal{L}_{X_{0}}
\end{array}\right),M_{0}\right],
\]
and both operators on the right-hand side are continuous, which yields
the assertion. The continuity of $[\alpha,\mathcal{L}_{X_{0}}^{\ast}]$
follows by arguing in the same way.
\end{proof}
In order to study (\ref{eq:prob}), we rewrite the second operator
on the left-hand side in the following way

\begin{align*}
 & \alpha\left(\begin{array}{cc}
\nabla_{X_{0}} & 0\\
0 & \nabla_{X_{0}}
\end{array}\right)M_{0}\\
 & =\alpha\left(\begin{array}{cc}
\mathcal{L}_{X_{0}} & 0\\
0 & -\mathcal{L}_{X_{0}}^{*}
\end{array}\right)M_{0}+\alpha\left(\overline{\left(\begin{array}{cc}
\nabla_{X_{0}} & 0\\
0 & \nabla_{X_{0}}
\end{array}\right)-\left(\begin{array}{cc}
\mathcal{L}_{X_{0}} & 0\\
0 & -\mathcal{L}_{X_{0}}^{*}
\end{array}\right)}\right)M_{0}\\
 & =\alpha M_{0}\left(\begin{array}{cc}
\mathcal{L}_{X_{0}} & 0\\
0 & -\mathcal{L}_{X_{0}}^{*}
\end{array}\right)+\alpha\left(\overline{\left(\begin{array}{cc}
\nabla_{X_{0}} & 0\\
0 & \nabla_{X_{0}}
\end{array}\right)-\left(\begin{array}{cc}
\mathcal{L}_{X_{0}} & 0\\
0 & -\mathcal{L}_{X_{0}}^{*}
\end{array}\right)}\right)M_{0}+\alpha\overline{\left[\left(\begin{array}{cc}
\mathcal{L}_{X_{0}} & 0\\
0 & -\mathcal{L}_{X_{0}}^{*}
\end{array}\right),M_{0}\right]},
\end{align*}
where we have used Lemma \ref{lem:first-reduction}. Thus, ignoring
the bounded operators, we may restrict ourselves to the study of the
operator 
\[
\partial_{0}M_{0}+\alpha M_{0}\left(\begin{array}{cc}
\mathcal{L}_{X_{0}} & 0\\
0 & -\mathcal{L}_{X_{0}}^{*}
\end{array}\right)+\left(\begin{array}{cc}
0 & -\interior{d}^{\ast}\\
\interior{d} & 0
\end{array}\right).
\]
The main idea is now to decompose the second operator in the above
sum in its symmetric and skew-symmetric part, which are studied in
the next proposition.
\begin{prop}
\label{prop:decomposition}The operator
\[
C\coloneqq\sym\alpha M_{0}\left(\begin{array}{cc}
\mathcal{L}_{X_{0}} & 0\\
0 & -\mathcal{L}_{X_{0}}^{*}
\end{array}\right)
\]
is bounded and selfadjoint. Moreover 
\[
D\coloneqq\skew\alpha M_{0}\left(\begin{array}{cc}
\mathcal{L}_{X_{0}} & 0\\
0 & -\mathcal{L}_{X_{0}}^{*}
\end{array}\right)
\]
is skew-selfadjoint.
\end{prop}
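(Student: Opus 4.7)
The plan is to apply Theorem \ref{thm:quasi-skewselfadjoint} with the bounded selfadjoint operator $\alpha M_{0}$ (selfadjoint because $\alpha$ and $M_{0}$ are bounded selfadjoint and commute by Hypothesis \ref{hyp:standard}) and the unbounded operator
$$K\coloneqq\begin{pmatrix}\mathcal{L}_{X_{0}} & 0\\0 & -\mathcal{L}_{X_{0}}^{*}\end{pmatrix}.$$
This will give skew-selfadjointness of $D=\skew(\alpha M_{0}K)$ at once, and selfadjointness and boundedness of $C=\sym(\alpha M_{0}K)$ will then be read off from the decomposition of $\overline{\alpha M_{0}K}$ furnished by the proof of that theorem.

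To set up the theorem, the first step is to check that $K$ is quasi-skew-selfadjoint: since $\dom(\mathcal{L}_{X_{0}})=\dom(\mathcal{L}_{X_{0}}^{*})$ one has $\dom(K)=\dom(K^{*})=\dom(\mathcal{L}_{X_{0}})\times\dom(\mathcal{L}_{X_{0}})$, and $\sym K$ is bounded because $\sym\mathcal{L}_{X_{0}}$ is. The invariance $\alpha M_{0}[\dom(K)]\subseteq\dom(K)$ is immediate from the invariance conditions in Hypothesis \ref{hyp:standard}, since $\alpha$ acts diagonally. For the continuity of $[\alpha M_{0},K]$, I would split
$$[\alpha M_{0},K]=\alpha[M_{0},K]+[\alpha,K]M_{0},$$
noting that $[\alpha,K]$ is block-diagonal with entries $[\alpha,\mathcal{L}_{X_{0}}]$ and $-[\alpha,\mathcal{L}_{X_{0}}^{*}]$, continuous by Hypothesis \ref{hyp:standard} and Lemma \ref{lem:first-reduction} respectively. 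For $[M_{0},K]$, the identity
$$K=\begin{pmatrix}\mathcal{L}_{X_{0}} & 0\\0 & \mathcal{L}_{X_{0}}\end{pmatrix}-\begin{pmatrix}0 & 0\\0 & \mathcal{L}_{X_{0}}+\mathcal{L}_{X_{0}}^{*}\end{pmatrix}$$
expresses $K$ as the block-diagonal operator from Hypothesis \ref{hyp:standard} plus a bounded perturbation (using quasi-skew-selfadjointness of $\mathcal{L}_{X_{0}}$), so $[M_{0},K]$ is continuous.

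Theorem \ref{thm:quasi-skewselfadjoint} then immediately yields that $D$ is skew-selfadjoint. For $C$, formula (\ref{eq:skew}) in the proof of that theorem, applied here, gives
$$\overline{\alpha M_{0}K}=D+B,\qquad B\coloneqq\alpha M_{0}\sym(K)+\frac{1}{2}\overline{[K^{*},\alpha M_{0}]},$$
where $B$ is bounded (the closure of $[K^{*},\alpha M_{0}]$ is bounded by Corollary \ref{cor:adjoint_m_accretive}, since $K$ is quasi-$m$-accretive and $[\alpha M_{0},K]$ is continuous). Taking adjoints gives $(\alpha M_{0}K)^{*}=-D+B^{*}$, so on the dense domain $\dom(\alpha M_{0}K)$ one has $\alpha M_{0}K+(\alpha M_{0}K)^{*}=B+B^{*}$. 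Passing to the closure then yields $2C=B+B^{*}$, which is bounded and selfadjoint, as required.

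The main technical hurdle will be the bookkeeping around $[\alpha M_{0},K]$, where one juggles $\mathcal{L}_{X_{0}}$ and its adjoint in the block structure and invokes Lemma \ref{lem:first-reduction} at the right moment; once this is in place, the rest is a direct application of the machinery developed in Section \ref{sec:Sums-of-Skew-Selfadjoint}.
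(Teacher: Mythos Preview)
Your proposal is correct and follows essentially the same route as the paper: verify that $K$ is quasi-skew-selfadjoint, that $\alpha M_{0}$ is selfadjoint, that $\alpha M_{0}[\dom(K)]\subseteq\dom(K)$, and that $[\alpha M_{0},K]$ is continuous (via the same splitting $\alpha[M_{0},K]+[\alpha,K]M_{0}$ and Lemma~\ref{lem:first-reduction}), and then invoke Theorem~\ref{thm:quasi-skewselfadjoint} for $D$. The only cosmetic difference is in the treatment of $C$: the paper computes directly on $\dom(K)$ that $\alpha M_{0}K+K^{*}M_{0}\alpha=[\alpha M_{0},K]+(K+K^{*})\alpha M_{0}$, which is visibly bounded, whereas you extract boundedness of $C$ from formula~(\ref{eq:skew}) via $2C=B+B^{*}$; both arguments amount to the same identity.
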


\begin{proof}
We note that $\left(\begin{array}{cc}
\mathcal{L}_{X_{0}} & 0\\
0 & -\mathcal{L}_{X_{0}}^{\ast}
\end{array}\right)$ is quasi-skew-selfadjoint in the sense of Subsection \ref{subsec:Commutators-for-Quasi-Skew-Selfa}.
Indeed, we have that 
\[
\dom\left(\begin{array}{cc}
\mathcal{L}_{X_{0}} & 0\\
0 & -\mathcal{L}_{X_{0}}^{\ast}
\end{array}\right)=\dom(\mathcal{L}_{X_{0}})\times\dom(\mathcal{L}_{X_{0}}^{\ast})=\dom(\mathcal{L}_{X_{0}})\times\dom(\mathcal{L}_{X_{0}})=\dom\left(\begin{array}{cc}
\mathcal{L}_{X_{0}}^{\ast} & 0\\
0 & -\mathcal{L}_{X_{0}}
\end{array}\right)
\]
since $\mathcal{L}_{X_{0}}$ is quasi-skew-selfadjoint and 
\[
\left(\begin{array}{cc}
\mathcal{L}_{X_{0}} & 0\\
0 & -\mathcal{L}_{X_{0}}^{\ast}
\end{array}\right)+\left(\begin{array}{cc}
\mathcal{L}_{X_{0}}^{\ast} & 0\\
0 & -\mathcal{L}_{X_{0}}
\end{array}\right)=\left(\begin{array}{cc}
\mathcal{L}_{X_{0}}+\mathcal{L}_{X_{0}}^{\ast} & 0\\
0 & -(\mathcal{L}_{X_{0}}^{\ast}+\mathcal{L}_{X_{0}})
\end{array}\right)
\]
is bounded, again by the quasi-skew-selfadjointness of $\mathcal{L}_{X_{0}}.$
\\
Next we note that $\alpha M_{0}$ is selfadjoint and that 
\[
\alpha M_{0}[\dom(\mathcal{L}_{X_{0}})\times\dom(\mathcal{L}_{X_{0}})]\subseteq\dom(\mathcal{L}_{X_{0}})\times\dom(\mathcal{L}_{X_{0}})
\]
by assumption. Moreover, 
\[
\left[\alpha M_{0},\left(\begin{array}{cc}
\mathcal{L}_{X_{0}} & 0\\
0 & -\mathcal{L}_{X_{0}}^{\ast}
\end{array}\right)\right]=\alpha\left[M_{0},\left(\begin{array}{cc}
\mathcal{L}_{X_{0}} & 0\\
0 & -\mathcal{L}_{X_{0}}^{\ast}
\end{array}\right)\right]+\left[\alpha,\left(\begin{array}{cc}
\mathcal{L}_{X_{0}} & 0\\
0 & -\mathcal{L}_{X_{0}}^{\ast}
\end{array}\right)\right]M_{0}
\]
is continuous by Lemma \ref{lem:first-reduction} and hence, $D$
is skew-selfadjoint by Theorem \ref{thm:quasi-skewselfadjoint}. Moreover,
$C$ is densely defined, since $\dom(\mathcal{L}_{X_{0}})\times\dom(\mathcal{L}_{X_{0}})\subseteq\dom(C)$
and 
\begin{align*}
 & \alpha M_{0}\left(\begin{array}{cc}
\mathcal{L}_{X_{0}} & 0\\
0 & -\mathcal{L}_{X_{0}}^{\ast}
\end{array}\right)+\left(\begin{array}{cc}
\mathcal{L}_{X_{0}}^{\ast} & 0\\
0 & -\mathcal{L}_{X_{0}}
\end{array}\right)M_{0}\alpha\\
= & \left[\alpha M_{0},\left(\begin{array}{cc}
\mathcal{L}_{X_{0}} & 0\\
0 & -\mathcal{L}_{X_{0}}^{\ast}
\end{array}\right)\right]+\left(\begin{array}{cc}
\mathcal{L}_{X_{0}} & 0\\
0 & -\mathcal{L}_{X_{0}}^{\ast}
\end{array}\right)\alpha M_{0}+\left(\begin{array}{cc}
\mathcal{L}_{X_{0}}^{\ast} & 0\\
0 & -\mathcal{L}_{X_{0}}
\end{array}\right)M_{0}\alpha\\
= & \left[\alpha M_{0},\left(\begin{array}{cc}
\mathcal{L}_{X_{0}} & 0\\
0 & -\mathcal{L}_{X_{0}}^{\ast}
\end{array}\right)\right]+\left(\begin{array}{cc}
\mathcal{L}_{X_{0}}+\mathcal{L}_{X_{0}}^{\ast} & 0\\
0 & -(\mathcal{L}_{X_{0}}+\mathcal{L}_{X_{0}}^{\ast})
\end{array}\right)\alpha M_{0}
\end{align*}
shows that $C$ is continuous and hence, selfadjoint.
\end{proof}
It is now our goal to prove that the operator 
\[
D+\left(\begin{array}{cc}
0 & -\interior{d}^{\ast}\\
\interior{d} & 0
\end{array}\right)
\]
with $D$ given as in Proposition \ref{prop:decomposition}, is essentially
skew-selfadjoint; i.e., its closure is skew-selfadjoint. For doing
so, we want to apply Theorem \ref{thm:sum} with 
\begin{equation}
L_{\varepsilon}\coloneqq\left(\begin{array}{cc}
\left(1-\epsilon\mathcal{L}_{X_{0}}^{*}\right)^{-1} & 0\\
0 & \left(1+\epsilon\mathcal{L}_{X_{0}}\right)^{-1}
\end{array}\right),\quad R_{\varepsilon}\coloneqq\left(\begin{array}{cc}
\left(1+\epsilon\mathcal{L}_{X_{0}}\right)^{-1} & 0\\
0 & \left(1-\epsilon\mathcal{L}_{X_{0}}^{*}\right)^{-1}
\end{array}\right)\label{eq:L and R}
\end{equation}
for $\epsilon>0$ small enough. Note that these operators are well-defined
and bounded, since by Proposition \ref{prop:Lie_exterior} the operator
$\mathcal{L}_{X_{0}}$ is quasi-skew-selfadjoint and hence, $\pm\mathcal{L}_{X_{0}}$
and $\pm\mathcal{L}_{X_{0}}^{\ast}$ are quasi-m-accretive. Note further
that this yields 
\[
L_{\varepsilon}^{\ast}=\left(\begin{array}{cc}
\left(1-\epsilon\mathcal{L}_{X_{0}}\right)^{-1} & 0\\
0 & \left(1+\epsilon\mathcal{L}_{X_{0}}^{\ast}\right)^{-1}
\end{array}\right)\to1_{H}
\]
strongly as $\varepsilon\to0$ by Lemma \ref{lem:resolvent_m_accretive}
and similarly $R_{\varepsilon}^{\ast}\to1_{H}$ strongly as $\varepsilon\to0.$ 
\begin{lem}
\label{lem:pre-sum}Let $\varepsilon>0$ small enough and $D$ as
in Proposition \ref{prop:decomposition}. Then the following statements
hold:

\begin{enumerate}[(a)]

\item We have 
\[
\ran(R_{\varepsilon})\subseteq\dom\left(\left(\begin{array}{cc}
\mathcal{L}_{X_{0}}^{\ast} & 0\\
0 & -\mathcal{L}_{X_{0}}
\end{array}\right)M_{0}\alpha\right)\cap\dom\left(\alpha M_{0}\left(\begin{array}{cc}
\mathcal{L}_{X_{0}} & 0\\
0 & -\mathcal{L}_{X_{0}}^{\ast}
\end{array}\right)\right)\subseteq\dom(D)
\]
and 
\[
R_{\varepsilon}\left[\dom(\interior{d})\times\dom(\interior{d}^{\ast})\right]\subseteq\dom(\interior{d})\times\dom(\interior{d}^{\ast}).
\]

\item We have
\[
\left(D+\left(\begin{array}{cc}
0 & -\interior{d}^{\ast}\\
\interior{d} & 0
\end{array}\right)\right)^{\ast}L_{\varepsilon}^{\ast}=-\left(D+\left(\begin{array}{cc}
0 & -\interior{d}^{\ast}\\
\interior{d} & 0
\end{array}\right)\right)L_{\varepsilon}^{\ast}.
\]

\end{enumerate}
\end{lem}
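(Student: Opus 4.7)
The plan is to dispatch (a) by carefully tracking domain inclusions from the structural hypotheses on $M_{0}$, $\alpha$, and $\mathcal{L}_{X_{0}}$, and to dispatch (b) by exploiting that both summands of the spatial operator in question are already skew-selfadjoint. Throughout I write $T\coloneqq\bigl(\begin{smallmatrix}\mathcal{L}_{X_{0}} & 0\\ 0 & -\mathcal{L}_{X_{0}}^{\ast}\end{smallmatrix}\bigr)$ and $B\coloneqq\bigl(\begin{smallmatrix}0 & -\interior{d}^{\ast}\\ \interior{d} & 0\end{smallmatrix}\bigr)$ for brevity.

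For part (a), I first note that quasi-skew-selfadjointness of $\mathcal{L}_{X_{0}}$ gives $\dom(\mathcal{L}_{X_{0}})=\dom(\mathcal{L}_{X_{0}}^{\ast})$, so $\ran(R_{\varepsilon})=\dom(\mathcal{L}_{X_{0}})\times\dom(\mathcal{L}_{X_{0}})=\dom(T)=\dom(T^{\ast})$. Hypothesis \ref{hyp:standard} ensures that $\alpha$ (acting componentwise) and $M_{0}$ each preserve this product; together with $\alpha M_{0}=M_{0}\alpha$ this shows that $\alpha M_{0}$ also preserves $\dom(T)$, giving the inclusion $\ran(R_{\varepsilon})\subseteq\dom(\alpha M_{0}T)\cap\dom(T^{\ast}M_{0}\alpha)$. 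The final inclusion into $\dom(D)$ follows from formula \eqref{eq:skew} in the proof of Theorem \ref{thm:quasi-skewselfadjoint}, which represents $D=\skew(\alpha M_{0}T)$ as $\overline{\alpha M_{0}T}$ corrected by bounded operators; in particular $\dom(D)=\dom(\overline{\alpha M_{0}T})\supseteq\dom(\alpha M_{0}T)$.

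For the second assertion in (a), Proposition \ref{prop:Lie_exterior} applied with $\eta=\pm\epsilon$ states that $(1\pm\epsilon\mathcal{L}_{X_{0}})^{-1}$ preserves $\dom(\interior{d})$. Taking adjoints in the inclusion $(1+\eta\mathcal{L}_{X_{0}})^{-1}\interior{d}\subseteq\interior{d}(1+\eta\mathcal{L}_{X_{0}})^{-1}$ reverses it, and since $((1+\eta\mathcal{L}_{X_{0}})^{-1})^{\ast}=(1+\eta\mathcal{L}_{X_{0}}^{\ast})^{-1}$, one obtains $(1+\eta\mathcal{L}_{X_{0}}^{\ast})^{-1}\interior{d}^{\ast}\subseteq\interior{d}^{\ast}(1+\eta\mathcal{L}_{X_{0}}^{\ast})^{-1}$; equivalently, $(1\mp\epsilon\mathcal{L}_{X_{0}}^{\ast})^{-1}$ preserves $\dom(\interior{d}^{\ast})$. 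The diagonal structure of $R_{\varepsilon}$ then delivers the claimed invariance of $\dom(\interior{d})\times\dom(\interior{d}^{\ast})$.

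For (b), the driving observations are that $D$ is skew-selfadjoint by Proposition \ref{prop:decomposition} and that $B$ is likewise skew-selfadjoint, by a direct computation using $\interior{d}^{\ast\ast}=\interior{d}$. Consequently $D^{\ast}+B^{\ast}=-(D+B)$ on $\dom(D)\cap\dom(B)$, and since always $D^{\ast}+B^{\ast}\subseteq(D+B)^{\ast}$, every $x\in\dom(D)\cap\dom(B)$ satisfies $(D+B)^{\ast}x=-(D+B)x$. Applied to $x=L_{\varepsilon}^{\ast}v$, this reduces (b) to showing that $L_{\varepsilon}^{\ast}v\in\dom(D)\cap\dom(B)$ on the common natural domain of both sides. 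Since $L_{\varepsilon}^{\ast}$ has the same block structure as $R_{\varepsilon}$, differing only in the signs of $\epsilon$, the arguments of (a) transfer verbatim, yielding $\ran(L_{\varepsilon}^{\ast})\subseteq\dom(D)$ and $L_{\varepsilon}^{\ast}[\dom(\interior{d})\times\dom(\interior{d}^{\ast})]\subseteq\dom(B)$. The main subtlety, and hence the main obstacle, is the correct reading of the operator identity in (b): the LHS has natural domain $\{v:L_{\varepsilon}^{\ast}v\in\dom((D+B)^{\ast})\}$, while the RHS has the a priori smaller domain $\{v:L_{\varepsilon}^{\ast}v\in\dom(D)\cap\dom(B)\}$; the argument above establishes the identity on this smaller, common domain, which is exactly what Theorem \ref{thm:sum} needs in the subsequent application, where the hypothesis is used only at elements for which $L_{\varepsilon}^{\ast}v$ lies in $\dom(D+B)$.
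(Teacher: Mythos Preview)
Your treatment of part (a) is correct and matches the paper's argument essentially verbatim.

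Part (b), however, has a genuine gap. You correctly observe that for $w\in\dom(D)\cap\dom(B)$ one has $(D+B)^{\ast}w=-(D+B)w$; but to prove the \emph{operator identity} $(D+B)^{\ast}L_{\varepsilon}^{\ast}=-(D+B)L_{\varepsilon}^{\ast}$ you must show that every $v$ with $L_{\varepsilon}^{\ast}v\in\dom((D+B)^{\ast})$ already satisfies $L_{\varepsilon}^{\ast}v\in\dom(D)\cap\dom(B)$. You establish $\ran(L_{\varepsilon}^{\ast})\subseteq\dom(D)$, which is fine, but for the $B$-part you only prove that $L_{\varepsilon}^{\ast}$ \emph{preserves} $\dom(B)$; this tells you nothing about $L_{\varepsilon}^{\ast}v$ when $v$ is merely in $\dom((D+B)^{\ast})$.

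Your attempted rescue --- that Theorem~\ref{thm:sum} only uses the identity at elements with $L_{\varepsilon}^{\ast}v\in\dom(D+B)$ --- is precisely backwards. In the proof of Theorem~\ref{thm:sum} one starts from $v\in\dom((C+D)^{\ast})$, and Lemma~\ref{lem:adjoint_transmut} gives $L_{n}^{\ast}v\in\dom((C+D)^{\ast})$; it is then the \emph{hypothesis} $(C+D)^{\ast}L_{n}^{\ast}=(C^{\ast}+D^{\ast})L_{n}^{\ast}$ (as an equality of operators, hence of domains) that forces $L_{n}^{\ast}v\in\dom(C^{\ast})\cap\dom(D^{\ast})$. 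Without the full domain equality that step collapses. The paper supplies the missing ingredient via a core argument: from $(1+\eta\mathcal{L}_{X_{0}})^{-1}\interior{d}\subseteq\interior{d}(1+\eta\mathcal{L}_{X_{0}})^{-1}$ and the strong convergence $(1+\eta\mathcal{L}_{X_{0}})^{-1}\to1$ one deduces that $\dom(\mathcal{L}_{X_{0}})\times\dom(\mathcal{L}_{X_{0}})$ is a core for $B$; then, given $w=L_{\varepsilon}^{\ast}v\in\dom((D+B)^{\ast})\cap\dom(D^{\ast})$, subtracting $\langle Du,w\rangle=\langle u,D^{\ast}w\rangle$ from the defining identity of $(D+B)^{\ast}$ shows $w\in\dom((B|_{\dom(D)\cap\dom(B)})^{\ast})=\dom(B^{\ast})=\dom(B)$. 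This core step is exactly what your argument lacks.
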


\begin{proof}
\begin{enumerate}[(a)]

\item Note that $\ran(R_{\varepsilon})\subseteq\dom(\mathcal{L}_{X_{0}})\times\dom(\mathcal{L}_{X_{0}}^{\ast})=\dom(\mathcal{L}_{X_{0}})\times\dom(\mathcal{L}_{X_{0}}).$
Moreover, we have that 
\[
\dom\left(\left(\begin{array}{cc}
\mathcal{L}_{X_{0}}^{\ast} & 0\\
0 & -\mathcal{L}_{X_{0}}
\end{array}\right)M_{0}\alpha\right)\supseteq\dom(\mathcal{L}_{X_{0}})\times\dom(\mathcal{L}_{X_{0}})
\]
and hence, 
\begin{align}
\ran(R_{\varepsilon}) & \subseteq\dom(\mathcal{L}_{X_{0}})\times\dom(\mathcal{L}_{X_{0}})\nonumber \\
 & =\dom\left(\left(\begin{array}{cc}
\mathcal{L}_{X_{0}}^{\ast} & 0\\
0 & -\mathcal{L}_{X_{0}}
\end{array}\right)M_{0}\alpha\right)\cap\dom\left(\alpha M_{0}\left(\begin{array}{cc}
\mathcal{L}_{X_{0}} & 0\\
0 & -\mathcal{L}_{X_{0}}^{\ast}
\end{array}\right)\right)\nonumber \\
 & \subseteq\dom(D).\label{eq:dom(L_x) in dom(D)}
\end{align}
Moreover, we have $(1\pm\varepsilon\mathcal{L}_{X})^{-1}\interior{d}\subseteq\interior{d}(1\pm\varepsilon\mathcal{L}_{X})^{-1}$
by Proposition \ref{prop:Lie_exterior}, which implies $(1\pm\varepsilon\mathcal{L}_{X}^{\ast})^{-1}\interior{d}^{\ast}\subseteq\interior{d}^{\ast}(1\pm\varepsilon\mathcal{L}_{X}^{\ast})^{-1}$.
Hence 
\[
R_{\varepsilon}\left[(\dom(\interior{d})\times\dom(\interior{d}^{\ast}))\right]\subseteq\dom(\interior{d})\times\dom(\interior{d}^{\ast}),
\]
which completes the proof for statement (a).

\item It suffices to prove 
\[
\dom\left(\left(D+\left(\begin{array}{cc}
0 & -\interior{d}^{\ast}\\
\interior{d} & 0
\end{array}\right)\right)^{\ast}L_{\varepsilon}^{\ast}\right)\subseteq\dom\left(\left(D+\left(\begin{array}{cc}
0 & -\interior{d}^{\ast}\\
\interior{d} & 0
\end{array}\right)\right)L_{\varepsilon}^{\ast}\right).
\]
So let $(x,y)\in\dom\left(\left(D+\left(\begin{array}{cc}
0 & -\interior{d}^{\ast}\\
\interior{d} & 0
\end{array}\right)\right)^{\ast}L_{\varepsilon}^{\ast}\right);$ i.e, 
\[
L_{\varepsilon}^{\ast}\left(x,y\right)=\left(\begin{array}{c}
\left(1-\epsilon\mathcal{L}_{X_{0}}\right)^{-1}x\\
\left(1+\epsilon\mathcal{L}_{X_{0}}^{\ast}\right)^{-1}y
\end{array}\right)\in\dom\left(\left(D+\left(\begin{array}{cc}
0 & -\interior{d}^{\ast}\\
\interior{d} & 0
\end{array}\right)\right)^{\ast}\right).
\]
Since $L_{\varepsilon}^{\ast}\left(x,y\right)\in\dom(\mathcal{L}_{X_{0}})\times\dom(\mathcal{L}_{X_{0}})\subseteq\dom(D)=\dom(D^{\ast})$
by (\ref{eq:dom(L_x) in dom(D)}), the assertion would follow if $\dom(D)$
is a core for $\left(\begin{array}{cc}
0 & -\interior{d}^{\ast}\\
\interior{d} & 0
\end{array}\right)$. Since $(1+\varepsilon\mathcal{L}_{X})^{-1}\interior{d}\subseteq\interior{d}(1+\varepsilon\mathcal{L}_{X})^{-1}$
and $(1+\varepsilon\mathcal{L}_{X})^{-1}\to1$ strongly as $\varepsilon\to0$,
it follows that $\dom(\mathcal{L}_{X})$ is a core for $\interior{d}.$
In the same way, it follows that $\dom(\mathcal{L}_{X}^{\ast})=\dom(\mathcal{L}_{X})$
is a core for $\interior{d}^{\ast}$ and hence, since $\dom(\mathcal{L}_{X})\times\dom(\mathcal{L}_{X})\subseteq\dom(D)$
by (\ref{eq:dom(L_x) in dom(D)}) the claim follows. This proves statement
(b).\qedhere

\end{enumerate}
\end{proof}
\begin{lem}
\label{lem:transmutator-bdd}For $\varepsilon_{0}>0$ small enough
there exists $K\geq0$ such that 
\[
\|[R_{\varepsilon}^{\ast},\alpha M_{0},L_{\varepsilon}^{\ast}]\|\leq K2\varepsilon\|C\|\quad(0<\varepsilon\leq\varepsilon_{0}),
\]
where $C$ is the operator given in Proposition \ref{prop:decomposition}.
\end{lem}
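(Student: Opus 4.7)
The plan is to recognize $L_\varepsilon^*$ and $R_\varepsilon^*$ as resolvents of a single block operator, then compute the transmutator via a standard resolvent identity and read off the factor $C$. Setting $\mathcal{N}\coloneqq\begin{pmatrix}\mathcal{L}_{X_{0}} & 0\\ 0 & -\mathcal{L}_{X_{0}}^{*}\end{pmatrix}$, one immediately checks
\[
L_\varepsilon^{*}=(1-\varepsilon\mathcal{N})^{-1},\qquad R_\varepsilon^{*}=(1+\varepsilon\mathcal{N}^{*})^{-1},
\]
both bounded for small $\varepsilon$ since $\pm\mathcal{N}$ are quasi-$m$-accretive by the quasi-skew-selfadjointness of $\mathcal{N}$ established in the proof of Proposition~\ref{prop:decomposition}. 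Write $\Gamma\coloneqq\alpha M_{0}$, which is bounded and selfadjoint because $\alpha M_{0}=M_{0}\alpha$ and both factors are selfadjoint.

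The main step is the identity on $\dom(\mathcal{N})$. Given $x\in H$, I would set $y\coloneqq L_\varepsilon^{*}x\in\dom(\mathcal{N})=\dom(\mathcal{N}^{*})$, so that $x=y-\varepsilon\mathcal{N}y$. By Hypothesis~\ref{hyp:standard} (invariance of $\dom(\mathcal{L}_{X_{0}})$ under $\alpha$ and under $M_{0}$ blockwise), $\Gamma y\in\dom(\mathcal{N}^{*})$, hence
\[
R_\varepsilon^{*}\Gamma x-\Gamma L_\varepsilon^{*}x=R_\varepsilon^{*}\bigl[\Gamma(1-\varepsilon\mathcal{N})y-(1+\varepsilon\mathcal{N}^{*})\Gamma y\bigr]=-\varepsilon R_\varepsilon^{*}\bigl(\Gamma\mathcal{N}+\mathcal{N}^{*}\Gamma\bigr)y.
\]
The factor of two then comes for free: since $\Gamma$ is selfadjoint, on $\dom(\mathcal{N})$ one has $\Gamma\mathcal{N}+\mathcal{N}^{*}\Gamma\subseteq 2\,\sym(\Gamma\mathcal{N})=2C$, with $C$ the bounded selfadjoint operator from Proposition~\ref{prop:decomposition}. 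Substituting yields
\[
R_\varepsilon^{*}\Gamma x-\Gamma L_\varepsilon^{*}x=-2\varepsilon R_\varepsilon^{*}C\,L_\varepsilon^{*}x,
\]
and therefore $\|[R_\varepsilon^{*},\alpha M_{0},L_\varepsilon^{*}]\|\le 2\varepsilon\,\|R_\varepsilon^{*}\|\|L_\varepsilon^{*}\|\,\|C\|$, which gives the claim with $K\coloneqq\sup_{0<\varepsilon\le\varepsilon_{0}}\|R_\varepsilon^{*}\|\|L_\varepsilon^{*}\|$, finite by quasi-$m$-accretivity.

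The only genuinely delicate point is the domain bookkeeping in the algebraic step: one must know that $\Gamma y\in\dom(\mathcal{N}^{*})$ so that $(1+\varepsilon\mathcal{N}^{*})\Gamma y$ may be expanded, and that the equality $\Gamma\mathcal{N}+\mathcal{N}^{*}\Gamma=2C$ can be taken pointwise on $\dom(\mathcal{N})$ rather than only as an operator inclusion after closure. Both are granted by the invariance hypotheses on $\alpha$ and $M_{0}$ together with the quasi-skew-selfadjointness of $\mathcal{L}_{X_{0}}$, so the remainder is a straightforward resolvent estimate.
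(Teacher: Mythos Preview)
Your proposal is correct and follows essentially the same route as the paper: both arguments compute the transmutator explicitly and arrive at the identity $[R_\varepsilon^{*},\alpha M_{0},L_\varepsilon^{*}]=-2\varepsilon\,R_\varepsilon^{*}C L_\varepsilon^{*}$, then bound via the uniform resolvent estimates. Your packaging via the single block operator $\mathcal{N}$ with $L_\varepsilon^{*}=(1-\varepsilon\mathcal{N})^{-1}$ and $R_\varepsilon^{*}=(1+\varepsilon\mathcal{N}^{*})^{-1}$ is a bit more streamlined than the paper's componentwise computation, but the substance and the domain bookkeeping are identical.
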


\begin{proof}
We first observe that 
\begin{align*}
\epsilon\left[\left(\begin{array}{cc}
\mathcal{L}_{X_{0}}^{*} & 0\\
0 & -\mathcal{L}_{X_{0}}
\end{array}\right),\alpha M_{0},\left(\begin{array}{cc}
-\mathcal{L}_{X_{0}} & 0\\
0 & \mathcal{L}_{X_{0}}^{*}
\end{array}\right)\right] & =\left(\begin{array}{cc}
\left(1+\epsilon\mathcal{L}_{X_{0}}^{*}\right) & 0\\
0 & \left(1-\epsilon\mathcal{L}_{X_{0}}\right)
\end{array}\right)\alpha M_{0}-\\
 & \quad-\alpha M_{0}\left(\begin{array}{cc}
\left(1-\epsilon\mathcal{L}_{X_{0}}\right) & 0\\
0 & \left(1+\epsilon\mathcal{L}_{X_{0}}^{*}\right)
\end{array}\right)
\end{align*}
and thus, 
\[
\]
\begin{align*}
[R_{\varepsilon}^{\ast},\alpha M_{0},L_{\varepsilon}^{\ast}] & =-\epsilon R_{\varepsilon}^{\ast}\left[\left(\begin{array}{cc}
\mathcal{L}_{X_{0}}^{*} & 0\\
0 & -\mathcal{L}_{X_{0}}
\end{array}\right),\alpha M_{0},\left(\begin{array}{cc}
-\mathcal{L}_{X_{0}} & 0\\
0 & \mathcal{L}_{X_{0}}^{*}
\end{array}\right)\right]L_{\varepsilon}^{\ast}\\
 & =-\epsilon R_{\varepsilon}^{\ast}\left(\left(\begin{array}{cc}
\mathcal{L}_{X_{0}}^{*} & 0\\
0 & -\mathcal{L}_{X_{0}}
\end{array}\right)\alpha M_{0}-\alpha M_{0}\left(\begin{array}{cc}
-\mathcal{L}_{X_{0}} & 0\\
0 & \mathcal{L}_{X_{0}}^{*}
\end{array}\right)\right)L_{\varepsilon}^{\ast}\\
 & =-\epsilon R_{\epsilon}^{\ast}\left(\alpha M_{0}\left(\begin{array}{cc}
\mathcal{L}_{X_{0}} & 0\\
0 & -\mathcal{L}_{X_{0}}^{*}
\end{array}\right)+\left(\begin{array}{cc}
\mathcal{L}_{X_{0}}^{*} & 0\\
0 & -\mathcal{L}_{X_{0}}
\end{array}\right)M_{0}\alpha\right)\\
 & =-2\varepsilon R_{\varepsilon}^{\ast}CL_{\varepsilon}^{\ast}.
\end{align*}
The assertion follows with $K\coloneqq\sup_{0<\varepsilon\leq\varepsilon_{0}}\|R_{\varepsilon}^{\ast}\|\|L_{\varepsilon}^{\ast}\|,$
which is finite, since $\mathcal{L}_{X}$ is quasi-skew-selfadjoint
and thus, $-\mathcal{L}_{X},\mathcal{L}_{X}^{\ast}$ are quasi-m-accretive.
\end{proof}
With these preparations at hand, we are able to prove the essentially
skew-selfadjointness of $D+\left(\begin{array}{cc}
0 & -\interior{d}^{\ast}\\
\interior{d} & 0
\end{array}\right)$.
\begin{prop}
\label{prop:sum-skew-selfadjoint}The operator $D+\left(\begin{array}{cc}
0 & -\interior{d}^{\ast}\\
\interior{d} & 0
\end{array}\right)$ is essentially skew-selfadjoint, where $D$ is given in Proposition
\ref{prop:decomposition}.
\end{prop}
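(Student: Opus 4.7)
The strategy is to apply Theorem \ref{thm:sum} with the two skew-selfadjoint operators $D$ (from Proposition \ref{prop:decomposition}) and $A\coloneqq\left(\begin{array}{cc} 0 & -\interior{d}^{\ast} \\ \interior{d} & 0 \end{array}\right)$, together with the approximating families $L_n\coloneqq L_{1/n}$ and $R_n\coloneqq R_{1/n}$ from (\ref{eq:L and R}). Because $D^{\ast}=-D$ and $A^{\ast}=-A$, the identity $\overline{D^{\ast}+A^{\ast}}=(D+A)^{\ast}$ provided by Theorem \ref{thm:sum} collapses to $-\overline{D+A}=(D+A)^{\ast}$, which is exactly the essential skew-selfadjointness of $D+A$.

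Most of the hypotheses of Theorem \ref{thm:sum} are directly at hand. Density of $\dom(D)\cap\dom(A)$ follows from the inclusion $\Lambda_c^k(\Omega)\times\Lambda_c^{k+1}(\Omega)\subseteq\dom(D)\cap\dom(A)$. The invariance $R_\varepsilon[\dom(D)\cap\dom(A)]\subseteq\dom(D)\cap\dom(A)$ and the adjoint-transfer identity $(D+A)^{\ast}L_\varepsilon^{\ast}=(D^{\ast}+A^{\ast})L_\varepsilon^{\ast}$ are the content of Lemma \ref{lem:pre-sum}(a) and (b). The strong convergences $L_\varepsilon^{\ast},R_\varepsilon^{\ast}\to 1$ were recorded directly after (\ref{eq:L and R}).

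The real work lies in controlling the transmutator $[L_\varepsilon,D+A,R_\varepsilon]$. Proposition \ref{prop:Lie_exterior}, together with its adjoint version invoked in the proof of Lemma \ref{lem:pre-sum}(a), gives $L_\varepsilon A\subseteq A R_\varepsilon$, so $[L_\varepsilon,A,R_\varepsilon]=0$ and the whole question reduces to $[L_\varepsilon,D,R_\varepsilon]$. Using formula (\ref{eq:skew}) from Theorem \ref{thm:quasi-skewselfadjoint}, I would write $D=\overline{\alpha M_0 K}+B$, where $K\coloneqq\left(\begin{array}{cc}\mathcal{L}_{X_0}&0\\0&-\mathcal{L}_{X_0}^{\ast}\end{array}\right)$ and $B$ is bounded; the transmutator contributed by $B$ is then bounded and vanishes in norm since $L_\varepsilon,R_\varepsilon\to 1$ strongly. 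For the principal part $\overline{\alpha M_0 K}$, the computation of Lemma \ref{lem:transmutator-bdd}, adjointed and combined with $M_0\alpha=\alpha M_0$ and $C^{\ast}=C$, yields $L_\varepsilon\alpha M_0-\alpha M_0 R_\varepsilon=2\varepsilon L_\varepsilon C R_\varepsilon$. Since $R_\varepsilon=(1+\varepsilon K)^{-1}$ commutes with $K$ on $\dom(K)$ and $\varepsilon R_\varepsilon K$ extends to the bounded operator $1-R_\varepsilon$, one arrives at the bounded extension
\[
[L_\varepsilon,\alpha M_0 K,R_\varepsilon]=2L_\varepsilon C(1-R_\varepsilon)
\]
on $H$. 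Passing to the adjoint and applying Lemma \ref{lem:resolvent_m_accretive} gives $[L_\varepsilon,D+A,R_\varepsilon]^{\ast}\to 0$ strongly as $\varepsilon\to 0$, so all hypotheses of Theorem \ref{thm:sum} are in force and the claim follows.

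The main obstacle is the explicit algebraic manipulation of $[L_\varepsilon,\alpha M_0 K,R_\varepsilon]$. The key observation making it tractable is that $L_\varepsilon$ and $R_\varepsilon$ are themselves resolvents built from $\mathcal{L}_{X_0}$ and $\mathcal{L}_{X_0}^{\ast}$, and hence commute block-diagonally with $K$ on $\dom(K)$; this allows one to shift all $\varepsilon$-dependence onto the bounded factor $\alpha M_0$ and thereby reduce the estimate to the one already established in Lemma \ref{lem:transmutator-bdd}.
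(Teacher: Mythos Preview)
Your overall architecture is the same as the paper's: apply Theorem \ref{thm:sum} with the families $L_\varepsilon,R_\varepsilon$ from (\ref{eq:L and R}), dispose of the hypotheses on domains and on $(D+A)^\ast L_\varepsilon^\ast$ via Lemma \ref{lem:pre-sum}, kill $[L_\varepsilon,A,R_\varepsilon]$ using Proposition \ref{prop:Lie_exterior}, and reduce everything to $[L_\varepsilon,D,R_\varepsilon]$. Where you diverge is in the treatment of this last transmutator. The paper expands $D$ from its definition, computes $DL_\varepsilon^\ast$ and $(DR_\varepsilon)^\ast$ term by term, verifies that their sum tends to $0$ on the dense set $\dom(\mathcal L_{X_0})\times\dom(\mathcal L_{X_0})$, and then invokes Lemma \ref{lem:transmutator-bdd} only to secure uniform boundedness. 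You instead use the decomposition $D=\overline{\alpha M_0 K}+B$ from (\ref{eq:skew}), observe that $R_\varepsilon=(1+\varepsilon K)^{-1}$ commutes with $K$, and combine the adjoint of Lemma \ref{lem:transmutator-bdd} with the resolvent identity $\varepsilon R_\varepsilon K=1-R_\varepsilon$ to obtain the closed form $[L_\varepsilon,\alpha M_0 K,R_\varepsilon]=2L_\varepsilon C(1-R_\varepsilon)$ directly. This is a tidier route: boundedness and the strong convergence of the adjoint $2(1-R_\varepsilon^\ast)CL_\varepsilon^\ast\to 0$ are then immediate, and you avoid the somewhat lengthy algebra in the paper's proof.

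One small slip: you write that $[L_\varepsilon,B,R_\varepsilon]=L_\varepsilon B-BR_\varepsilon$ ``vanishes in norm since $L_\varepsilon,R_\varepsilon\to 1$ strongly.'' Strong convergence of $L_\varepsilon,R_\varepsilon$ does \emph{not} give norm convergence of $L_\varepsilon B-BR_\varepsilon$; it only gives strong convergence to $0$. This is harmless, however, because Theorem \ref{thm:sum} requires only strong convergence of $[L_\varepsilon,D+A,R_\varepsilon]^\ast$, and $(L_\varepsilon B-BR_\varepsilon)^\ast=B^\ast L_\varepsilon^\ast-R_\varepsilon^\ast B^\ast\to 0$ strongly. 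With that correction your argument is complete.
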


\begin{proof}
We will apply Theorem \ref{thm:sum} with the operators $L_{\varepsilon},R_{\varepsilon}$
given in (\ref{eq:L and R}). Thanks to Lemma \ref{lem:pre-sum} we
only need to check that 
\[
\left[L_{\varepsilon},D+\left(\begin{array}{cc}
0 & -\interior{d}^{\ast}\\
\interior{d} & 0
\end{array}\right),R_{\varepsilon}\right]
\]
is continuous and that 
\[
\left[L_{\varepsilon},D+\left(\begin{array}{cc}
0 & -\interior{d}^{\ast}\\
\interior{d} & 0
\end{array}\right),R_{\varepsilon}\right]^{\ast}\to0
\]
strongly as $\epsilon\to0$. We compute 
\begin{align*}
\left[L_{\varepsilon},D+\left(\begin{array}{cc}
0 & -\interior{d}^{\ast}\\
\interior{d} & 0
\end{array}\right),R_{\varepsilon}\right] & =\left[L_{\varepsilon},D,R_{\varepsilon}\right]+\left[L_{\varepsilon},\left(\begin{array}{cc}
0 & -\interior{d}^{\ast}\\
\interior{d} & 0
\end{array}\right),R_{\varepsilon}\right]\\
 & =[L_{\varepsilon},D,R_{\varepsilon}],
\end{align*}
where we have used $(1\pm\varepsilon\mathcal{L}_{X_{0}})^{-1}\interior{d}\subseteq\interior{d}(1\pm\varepsilon\mathcal{L}_{X_{0}})^{-1}$
by Proposition \ref{prop:Lie_exterior}, which implies $(1\pm\varepsilon\mathcal{L}_{X_{0}}^{\ast})^{-1}\interior{d}^{\ast}\subseteq\interior{d}^{\ast}(1\pm\varepsilon\mathcal{L}_{X_{0}}^{\ast})^{-1}$
and thus, the second transmutator vanishes. Now we have 
\begin{align*}
 & \frac{1}{2}\left(\alpha M_{0}\left(\begin{array}{cc}
\mathcal{L}_{X_{0}} & 0\\
0 & -\mathcal{L}_{X_{0}}^{\ast}
\end{array}\right)-\left(\begin{array}{cc}
\mathcal{L}_{X_{0}}^{\ast} & 0\\
0 & -\mathcal{L}_{X_{0}}
\end{array}\right)M_{0}\alpha\right)\left(\begin{array}{cc}
(1\pm\varepsilon\mathcal{L}_{X_{0}})^{-1} & 0\\
0 & (1\mp\varepsilon\mathcal{L}_{X_{0}}^{\ast})^{-1}
\end{array}\right)\\
 & =\frac{1}{2}\left(\left[\alpha M_{0},\left(\begin{array}{cc}
\mathcal{L}_{X_{0}}^{\ast} & 0\\
0 & -\mathcal{L}_{X_{0}}
\end{array}\right)\right]+\alpha M_{0}\left(\begin{array}{cc}
\mathcal{L}_{X_{0}}-\mathcal{L}_{X_{0}}^{\ast} & 0\\
0 & \mathcal{L}_{X_{0}}-\mathcal{L}_{X_{0}}^{\ast}
\end{array}\right)\right)\left(\begin{array}{cc}
(1\pm\varepsilon\mathcal{L}_{X_{0}})^{-1} & 0\\
0 & (1\mp\varepsilon\mathcal{L}_{X_{0}}^{\ast})^{-1}
\end{array}\right)\\
 & =\frac{1}{2}\left[\alpha M_{0},\left(\begin{array}{cc}
\mathcal{L}_{X_{0}}^{\ast} & 0\\
0 & -\mathcal{L}_{X_{0}}
\end{array}\right)\right]\left(\begin{array}{cc}
(1\pm\varepsilon\mathcal{L}_{X_{0}})^{-1} & 0\\
0 & (1\mp\varepsilon\mathcal{L}_{X_{0}}^{\ast})^{-1}
\end{array}\right)+\\
 & \quad+\alpha M_{0}\left(\begin{array}{cc}
\left(\mathcal{L}_{X_{0}}-\sym(\mathcal{L}_{X_{0}})\right)(1\pm\varepsilon\mathcal{L}_{X_{0}})^{-1} & 0\\
0 & \left(-\mathcal{L}_{X_{0}}^{\ast}+\sym(\mathcal{L}_{X_{0}})\right)(1\mp\epsilon\mathcal{L}_{X_{0}}^{\ast})^{-1}
\end{array}\right),
\end{align*}
which is continuous. Note that in one case this operators equals $DR_{\varepsilon}$
and in the other case it equals $DL_{\varepsilon}^{\ast}.$ Since
$L_{\varepsilon}D\subseteq(D^{\ast}L_{\varepsilon}^{\ast})^{\ast}=\left(-DL_{\varepsilon}^{\ast}\right)^{\ast}$,
we infer that also $L_{\varepsilon}D$ is continuous and hence, so
is $[L_{\varepsilon},D,R_{\varepsilon}].$\\
Now we come to the second claim. We compute 
\begin{align*}
[L_{\varepsilon},D,R_{\varepsilon}]^{\ast} & =\left(L_{\varepsilon}D-DR_{\varepsilon}\right)^{\ast}\\
 & =(L_{\varepsilon}D)^{\ast}-(DR_{\varepsilon})^{\ast}\\
 & =D^{\ast}L_{\varepsilon}^{\ast}-(DR_{\varepsilon})^{\ast}\\
 & =-(DL_{\varepsilon}^{\ast}+(DR_{\varepsilon})^{\ast})
\end{align*}
and so, we have to show that $DL_{\varepsilon}^{\ast}+(DR_{\varepsilon})^{\ast}\to0$
strongly as $\varepsilon\to0.$ By the computation above we have that
\begin{align*}
DL_{\varepsilon}^{\ast} & =\frac{1}{2}\left[\alpha M_{0},\left(\begin{array}{cc}
\mathcal{L}_{X_{0}}^{\ast} & 0\\
0 & -\mathcal{L}_{X_{0}}
\end{array}\right)\right]L_{\varepsilon}^{\ast}+\alpha M_{0}\left(\begin{array}{cc}
\mathcal{L}_{X_{0}} & 0\\
0 & -\mathcal{L}_{X_{0}}^{\ast}
\end{array}\right)L_{\varepsilon}^{\ast}+\alpha M_{0}\left(\begin{array}{cc}
-\sym(\mathcal{L}_{X_{0}}) & 0\\
0 & \sym(\mathcal{L}_{X_{0}})
\end{array}\right)L_{\varepsilon}^{\ast}
\end{align*}
and 
\begin{align*}
(DR_{\varepsilon})^{\ast} & =R_{\varepsilon}^{\ast}\frac{1}{2}\left[\alpha M_{0},\left(\begin{array}{cc}
\mathcal{L}_{X_{0}}^{\ast} & 0\\
0 & -\mathcal{L}_{X_{0}}
\end{array}\right)\right]^{\ast}+R_{\varepsilon}^{\ast}\left(\begin{array}{cc}
\mathcal{L}_{X_{0}}^{\ast} & 0\\
0 & -\mathcal{L}_{X_{0}}
\end{array}\right)M_{0}\alpha+\\
 & \quad+R_{\varepsilon}^{\ast}\left(\begin{array}{cc}
-\sym(\mathcal{L}_{X_{0}}) & 0\\
0 & \sym(\mathcal{L}_{X_{0}})
\end{array}\right)M_{0}\alpha\\
 & =R_{\varepsilon}^{\ast}\frac{1}{2}\overline{\left[\left(\begin{array}{cc}
\mathcal{L}_{X_{0}} & 0\\
0 & -\mathcal{L}_{X_{0}}^{\ast}
\end{array}\right),\alpha M_{0}\right]}+\left(\begin{array}{cc}
\mathcal{L}_{X_{0}}^{\ast} & 0\\
0 & -\mathcal{L}_{X_{0}}
\end{array}\right)R_{\varepsilon}^{\ast}M_{0}\alpha+\\
 & \quad+R_{\varepsilon}^{\ast}\left(\begin{array}{cc}
-\sym(\mathcal{L}_{X_{0}}) & 0\\
0 & \sym(\mathcal{L}_{X_{0}})
\end{array}\right)M_{0}\alpha,
\end{align*}
where we have used Corollary \ref{cor:adjoint_m_accretive} in the
last equality. Now, on $\dom(\mathcal{L}_{X_{0}})\times\dom(\mathcal{L}_{X_{0}})$
we obtain
\begin{align*}
DL_{\varepsilon}+(DR_{\varepsilon})^{\ast}\to & \frac{1}{2}\left[\alpha M_{0},\left(\begin{array}{cc}
\mathcal{L}_{X_{0}}^{\ast} & 0\\
0 & -\mathcal{L}_{X_{0}}
\end{array}\right)\right]+\alpha M_{0}\left(\begin{array}{cc}
\mathcal{L}_{X_{0}} & 0\\
0 & -\mathcal{L}_{X_{0}}^{\ast}
\end{array}\right)+\\
 & +\alpha M_{0}\left(\begin{array}{cc}
-\sym(\mathcal{L}_{X_{0}}) & 0\\
0 & \sym(\mathcal{L}_{X_{0}})
\end{array}\right)+\frac{1}{2}\left[\left(\begin{array}{cc}
\mathcal{L}_{X_{0}} & 0\\
0 & -\mathcal{L}_{X_{0}}^{\ast}
\end{array}\right),\alpha M_{0}\right]+\\
 & +\left(\begin{array}{cc}
\mathcal{L}_{X_{0}}^{\ast} & 0\\
0 & -\mathcal{L}_{X_{0}}
\end{array}\right)M_{0}\alpha+\left(\begin{array}{cc}
-\sym(\mathcal{L}_{X_{0}}) & 0\\
0 & \sym(\mathcal{L}_{X_{0}})
\end{array}\right)M_{0}\alpha\\
= & \frac{1}{2}\left(-\left(\begin{array}{cc}
\mathcal{L}_{X_{0}}^{\ast} & 0\\
0 & -\mathcal{L}_{X_{0}}
\end{array}\right)M_{0}\alpha+\alpha M_{0}\left(\begin{array}{cc}
\mathcal{L}_{X_{0}} & 0\\
0 & -\mathcal{L}_{X_{0}}^{\ast}
\end{array}\right)-\right.\\
 & \qquad\left.-\alpha M_{0}\left(\begin{array}{cc}
\mathcal{L}_{X_{0}} & 0\\
0 & -\mathcal{L}_{X_{0}}^{\ast}
\end{array}\right)+\left(\begin{array}{cc}
\mathcal{L}_{X_{0}}^{\ast} & 0\\
0 & -\mathcal{L}_{X_{0}}
\end{array}\right)M_{0}\alpha\right)\\
= & 0
\end{align*}
and thus, it suffices to show that $DL_{\varepsilon}+(DR_{\varepsilon})^{\ast}$
is uniformly bounded in $\varepsilon.$ Using that $L_{\varepsilon}^{\ast}$
and $R_{\varepsilon}^{\ast}$ are uniformly bounded, the only term
we have to consider in the sum is 
\begin{align*}
\alpha M_{0}\left(\begin{array}{cc}
\mathcal{L}_{X_{0}} & 0\\
0 & -\mathcal{L}_{X_{0}}^{\ast}
\end{array}\right)L_{\varepsilon}^{\ast}+\left(\begin{array}{cc}
\mathcal{L}_{X_{0}}^{\ast} & 0\\
0 & -\mathcal{L}_{X_{0}}
\end{array}\right)R_{\varepsilon}^{\ast}M_{0}\alpha & =\alpha M_{0}\frac{1}{\epsilon}\left(-1+L_{\varepsilon}^{\ast}\right)+\frac{1}{\epsilon}(1-R_{\varepsilon}^{\ast})M_{0}\alpha\\
 & =\frac{1}{\varepsilon}\left(\alpha M_{0}L_{\varepsilon}^{\ast}-R_{\varepsilon}^{\ast}M_{0}\alpha\right)\\
 & =-\frac{1}{\varepsilon}\left([R_{\varepsilon}^{\ast},\alpha M_{0},L_{\varepsilon}^{\ast}]\right).
\end{align*}
This term, however, is uniformly bounded by Lemma \ref{lem:transmutator-bdd}.
\end{proof}
\begin{thm}
\label{thm:main}Problem (\ref{eq:prob}) is well-posed in the following
sense: There exists $\rho_{0}>0$ such that for each $\rho\geq\rho_{0}$
the operator 
\begin{equation}
\overline{\partial_{0}M_{0}+\tilde{M}_{1}+\overline{\left(D+\left(\begin{array}{cc}
0 & -\interior{d}^{\ast}\\
\interior{d} & 0
\end{array}\right)\right)}^{H}}^{L_{2,\rho}(\R;H)}\label{eq:correct op}
\end{equation}
is continuously invertible. Here 
\[
\tilde{M}_{1}\coloneqq M_{1}+\alpha\left(\overline{\left(\begin{array}{cc}
\nabla_{X_{0}} & 0\\
0 & \nabla_{X_{0}}
\end{array}\right)-\left(\begin{array}{cc}
\mathcal{L}_{X_{0}} & 0\\
0 & -\mathcal{L}_{X_{0}}^{*}
\end{array}\right)}\right)M_{0}+\alpha\overline{\left[\left(\begin{array}{cc}
\mathcal{L}_{X_{0}} & 0\\
0 & -\mathcal{L}_{X_{0}}^{*}
\end{array}\right),M_{0}\right]}+C
\]
and $C$ and $D$ are given as in Proposition \ref{prop:decomposition}.
Moreover, denoting by $S_{\rho}$ the inverse of (\ref{eq:correct op}),
we have that $S_{\rho}$ is causal and independent of the choice of
$\rho\geq\rho_{0}$ in the sense that for each $F\in H_{\rho,0}(\R;H)\cap H_{\mu,0}(\R;H)$
with $\rho,\mu\geq\rho_{0}$ we have that 
\[
S_{\rho}F=S_{\mu}F.
\]
\end{thm}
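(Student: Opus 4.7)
The plan is to reduce Theorem~\ref{thm:main} to a direct application of Corollary~\ref{cor:perturbation}; all the analytic substance has already been produced in Propositions~\ref{prop:decomposition} and~\ref{prop:sum-skew-selfadjoint}, so the proof should be essentially bookkeeping. I would proceed in three steps.

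First I would re-examine the algebraic identity written just above Proposition~\ref{prop:decomposition}: combining Lemma~\ref{lem:Lie-covariant} and Lemma~\ref{lem:first-reduction}, the drift term $\alpha\bigl(\begin{smallmatrix}\nabla_{X_{0}} & 0\\ 0 & \nabla_{X_{0}}\end{smallmatrix}\bigr)M_{0}$ agrees on the common dense domain $\dom(\mathcal{L}_{X_{0}})\times\dom(\mathcal{L}_{X_{0}})$ with $\alpha M_{0}\bigl(\begin{smallmatrix}\mathcal{L}_{X_{0}} & 0\\ 0 & -\mathcal{L}_{X_{0}}^{*}\end{smallmatrix}\bigr)$ up to the two bounded operators
\[
\alpha\overline{\left(\left(\begin{array}{cc}\nabla_{X_{0}} & 0\\ 0 & \nabla_{X_{0}}\end{array}\right)-\left(\begin{array}{cc}\mathcal{L}_{X_{0}} & 0\\ 0 & -\mathcal{L}_{X_{0}}^{*}\end{array}\right)\right)}M_{0},\qquad \alpha\overline{\left[\left(\begin{array}{cc}\mathcal{L}_{X_{0}} & 0\\ 0 & -\mathcal{L}_{X_{0}}^{*}\end{array}\right),M_{0}\right]}.
\]
Decomposing the Lie-type piece into its selfadjoint and skew-selfadjoint parts $C+D$ via Proposition~\ref{prop:decomposition} and collecting all bounded contributions produces exactly the operator $\tilde{M}_{1}\in L(H)$ stated in the theorem, while the remaining unbounded part is $D+\bigl(\begin{smallmatrix}0 & -\interior{d}^{*}\\ \interior{d} & 0\end{smallmatrix}\bigr)$.

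Second, I would set
\[
A\coloneqq\overline{D+\left(\begin{array}{cc}0 & -\interior{d}^{*}\\ \interior{d} & 0\end{array}\right)}^{H},
\]
which Proposition~\ref{prop:sum-skew-selfadjoint} identifies as a skew-selfadjoint operator on $H$. Extending $M_{0}$, $\tilde{M}_{1}$ and $A$ canonically to $L_{2,\rho}(\R;H)$, the operator in~(\ref{eq:correct op}) is then exactly the closure of $\partial_{0}M_{0}+\tilde{M}_{1}+A$ in $L_{2,\rho}(\R;H)$.

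Finally, since $M_{0}=M_{0}^{*}\in L(H)$ with $M_{0}\geq c>0$, $\tilde{M}_{1}\in L(H)$ and $A$ is skew-selfadjoint, Corollary~\ref{cor:perturbation} applies verbatim and supplies some $\rho_{0}\geq0$ such that for every $\rho\geq\rho_{0}$ the closure in~(\ref{eq:correct op}) is continuously invertible. Causality of $S_{\rho}$ and its independence of $\rho$ in the stated sense are then inherited directly from Theorem~\ref{thm:sol_theory}, on which Corollary~\ref{cor:perturbation} rests. I expect no serious obstacle at this stage; the only mildly delicate point is the identification of the two closures, which goes through because every operator involved already contains $\dom(\mathcal{L}_{X_{0}})\times\dom(\mathcal{L}_{X_{0}})$ as a common dense subset of its domain. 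The genuine analytic hurdle---the essential skew-selfadjointness of $D+\bigl(\begin{smallmatrix}0 & -\interior{d}^{*}\\ \interior{d} & 0\end{smallmatrix}\bigr)$, which in turn relies on the whole transmutator machinery of Section~\ref{sec:Sums-of-Skew-Selfadjoint}---has already been cleared in Proposition~\ref{prop:sum-skew-selfadjoint}.
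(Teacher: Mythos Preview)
Your proposal is correct and follows essentially the same route as the paper: invoke Corollary~\ref{cor:perturbation} with $A$ the closure of $D+\bigl(\begin{smallmatrix}0 & -\interior{d}^{*}\\ \interior{d} & 0\end{smallmatrix}\bigr)$, using Lemma~\ref{lem:first-reduction} and Proposition~\ref{prop:decomposition} for the boundedness of $\tilde{M}_{1}$ and Proposition~\ref{prop:sum-skew-selfadjoint} for the skew-selfadjointness of $A$. Your write-up is in fact more detailed than the paper's one-sentence proof, including the explicit observation that causality and $\rho$-independence are inherited from Theorem~\ref{thm:sol_theory}.
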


\begin{proof}
The claim follows from Corollary \ref{cor:perturbation}, where we
use that $\tilde{M}_{1}$ is bounded by Lemma \ref{lem:first-reduction}
and Proposition \ref{prop:decomposition} and that $\overline{\left(D+\left(\begin{array}{cc}
0 & -\interior{d}^{\ast}\\
\interior{d} & 0
\end{array}\right)\right)}^{H}$ is skew-selfadjoint by Proposition \ref{prop:sum-skew-selfadjoint}.
\end{proof}

\subsection{Cylindrical domains}

We shall consider cylindrical domains as two reference cases:
\begin{enumerate}
\item The infinite straight tube: $\Omega=\Sigma\times\mathbb{R}$, $\Sigma\subseteq\mathbb{R}^{2}.$
\item The finite straight tube: $\Omega=\Sigma\times]-1/2,1/2[$, $\Sigma\subseteq\mathbb{R}^{2},$
(with top and bottom identified this is a torus).
\end{enumerate}
We assume a metric tensor $g$ for $\Omega$. We discuss the Hypotheses
\ref{hyp:standard} in both cases for the vector field $X_{0}\coloneqq e_{3}.$ 

In both cases we recall that $\mathcal{L}_{e_{3}}$ is essentially
$\partial_{3}$ (up to lower order terms). In particular, $\mathcal{L}_{e_{3}}$
is quasi-skew-selfadjoint, since in both cases $\partial_{3}$ is
skew-selfadjoint (note that we impose periodic boundary conditions
in the second case). Moreover, the assumption 
\[
\sup\left\{ \|\nabla_{Y}e_{3}\|_{\infty}\,;\,Y\in T_{1,c}^{0}(\Omega),\:\|Y\|_{\infty}\leq1\right\} <\infty
\]
 is a constraint on the Riemannian metric $g$. Indeed, we compute
\[
\nabla_{Y}e_{3}=\nabla_{Y}\left(\sum_{i=1}^{n}g^{i}(e_{3})g_{i}\right)=\sum_{i=1}^{n}(\nabla_{Y}g^{i}(e_{3}))g_{i}+g^{i}(e_{3})\nabla_{Y}g_{i}
\]
for each $Y\in T_{1}^{0}(\Omega)$ and we require that the metric
$g$ is given such that 
\[
\sup\left\{ \|\nabla_{Y}e_{3}\|_{\infty}\,;\,Y\in T_{1,c}^{0}(\Omega),\:\|Y\|_{\infty}\leq1\right\} <\infty
\]
 is satisfied. Thus, we are left to discuss the assumption 
\[
(1\pm\epsilon\mathcal{L}_{X})[\Lambda^{k}(\Omega)\cap\dom(\mathcal{L}_{X})\cap\dom(d)]\cap\dom(\interior{d})\text{ is dense in }\dom(\interior d).
\]
We will do this in both cases separately.

\begin{itemize}

\item{\textbf{Case 1:}} 
\begin{prop}
\label{prop:staright_tube}For $|\eta|$ small enough we have
\[
\left(1+\eta\mathcal{L}_{e_{3}}\right)\left[\Lambda^{k}(\Omega)\cap\dom(\mathcal{L}_{e_{3}})\right]\cap\dom(\interior{d})\text{ is dense in }\dom(\interior d).
\]
\end{prop}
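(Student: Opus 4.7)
The plan is to realise every $u\in\dom(\interior{d})$ as the graph-norm limit of elements of the form $(1+\eta\mathcal{L}_{e_{3}})v$ with $v\in\Lambda^{k}(\Omega)\cap\dom(\mathcal{L}_{e_{3}})$. First, since $\interior{d}=\overline{d_{c}}$ by definition, the space $\Lambda_{c}^{k}(\Omega)$ is a core for $\interior{d}$. Given $u\in\dom(\interior{d})$, I pick a sequence $(u_{n})_{n}\subseteq\Lambda_{c}^{k}(\Omega)$ with $u_{n}\to u$ in the graph norm. Quasi-skew-selfadjointness of $\mathcal{L}_{e_{3}}$ implies quasi-$m$-accretivity via Lemma \ref{lem:quasi-m-accretive_ex}, so $1+\eta\mathcal{L}_{e_{3}}$ is boundedly invertible on $L_{2}^{k}(\Omega)$ for $|\eta|$ sufficiently small. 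Setting $v_{n}\coloneqq(1+\eta\mathcal{L}_{e_{3}})^{-1}u_{n}$, I automatically obtain $v_{n}\in\dom(\mathcal{L}_{e_{3}})$ with $(1+\eta\mathcal{L}_{e_{3}})v_{n}=u_{n}\in\Lambda_{c}^{k}(\Omega)\subseteq\dom(\interior{d})$; the only remaining point is to show $v_{n}\in\Lambda^{k}(\Omega)$, i.e., that $v_{n}$ is smooth.

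This is the key step. In the product chart $\Omega=\Sigma\times\R$ with $e_{3}=\partial/\partial x_{3}$ a coordinate vector field, one has $\mathcal{L}_{e_{3}}dx^{I}=0$ for every coordinate multi-index $I$ (since $e_{3}(x^{i})=\delta_{3}^{i}$ is constant), so on a smooth form $v=\sum_{I}v^{I}\,dx^{I}$ the Lie derivative acts purely componentwise: $\mathcal{L}_{e_{3}}v=\sum_{I}(\partial_{3}v^{I})\,dx^{I}$. Hence $(1+\eta\mathcal{L}_{e_{3}})v_{n}=u_{n}$ decouples into the scalar first-order ODEs $(1+\eta\partial_{3})v_{n}^{I}=u_{n}^{I}$ in $x_{3}$, with $(x_{1},x_{2})\in\Sigma$ as smooth parameters. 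For $\eta>0$ the unique $L_{2}$-integrable solution is
\[
v_{n}^{I}(x_{1},x_{2},x_{3})=\frac{1}{\eta}\int_{-\infty}^{x_{3}}\e^{-(x_{3}-s)/\eta}u_{n}^{I}(x_{1},x_{2},s)\,ds
\]
(and symmetrically from $+\infty$ for $\eta<0$). Since $u_{n}^{I}\in C_{c}^{\infty}(\Omega)$, differentiation under the integral sign yields $v_{n}^{I}\in C^{\infty}(\Omega)$, and hence $v_{n}\in\Lambda^{k}(\Omega)$. Consequently $u_{n}\in(1+\eta\mathcal{L}_{e_{3}})[\Lambda^{k}(\Omega)\cap\dom(\mathcal{L}_{e_{3}})]\cap\dom(\interior{d})$, and since $u_{n}\to u$ in the graph norm of $\interior{d}$, the asserted density follows.

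The main obstacle is the identification of the pointwise ODE solution with the Hilbert-space resolvent $(1+\eta\mathcal{L}_{e_{3}})^{-1}u_{n}$. This works cleanly because the cylindrical product structure provides a coordinate vector field $e_{3}$ whose flow on $\Sigma\times\R$ is complete and along which the differential operator $\mathcal{L}_{e_{3}}$ decouples from the Riemannian metric (the metric enters only through the volume form in the definition of $L_{2}^{k}(\Omega)$, not through $\mathcal{L}_{e_{3}}$ itself). On a more general geometry, or for a vector field whose flow does not extend globally, one would need a substantially more delicate regularity argument.
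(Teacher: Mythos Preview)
Your proof is correct and follows essentially the same strategy as the paper's: both arguments show that $(1+\eta\mathcal{L}_{e_{3}})^{-1}$ maps $\Lambda_{c}^{k}(\Omega)$ into smooth forms by solving a first-order ODE in the $x_{3}$-variable, so that $C_{c}^{\infty}(\Omega)$---which is dense in $\dom(\interior{d})$---lies in the set whose density is claimed. The only cosmetic difference is that the paper writes $\mathcal{L}_{e_{3}}=\partial_{3}+b$ with $b$ a smooth multiplication operator and appeals to general ODE regularity, whereas you observe directly that $\mathcal{L}_{e_{3}}dx^{I}=0$ for a coordinate vector field, so $b=0$ and the equation decouples into explicit scalar convolutions; your version is thus slightly more concrete but otherwise the same argument.
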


\begin{proof}
The set $C_{c}^{\infty}(\Omega)$ is dense in $\dom(\interior d)$
and -- as can be shown by a standard cut-off technique -- 
\begin{align*}
\interior Z\left(\Omega\right) & \coloneqq\left\{ \phi\in C^{\infty}\left(\Omega\right)\cap\dom\left(d\right)\,;\,\supp\left(\phi\right)\subseteq\tilde{\Sigma}\times\mathbb{R}\text{ for some }\tilde{\Sigma}\text{ relatively compact in }\Sigma\right\} \\
 & \subseteq\dom(\interior d).
\end{align*}
We show that $(1+\eta\mathcal{L}_{e_{3}})^{-1}f\in\interior{Z}(\Omega)$
for $f\in C_{c}^{\infty}(\Omega).$ Indeed, setting $u\coloneqq(1+\eta\mathcal{L}_{e_{3}})^{-1}f$
and $b\coloneqq\overline{\mathcal{L}_{e_{3}}-\partial_{3}}$, we infer
that 
\[
\left(1+\eta\partial_{3}\right)u+\eta bu=f.
\]
Since $b$ is a smooth multiplication operator, $u\in C^{\infty}(\Omega)\cap\dom(d)$
and $\supp\left(u\right)\subseteq\tilde{\Sigma}\times\mathbb{R}$
where $\tilde{\Sigma}\subseteq\Sigma$ is relatively compact such
that $\supp f\subseteq\tilde{\Sigma}\times\R$. Thus, we have in particular
\[
\interior z\left(\Omega\right)\coloneqq\left(1+\eta\mathcal{L}_{e_{3}}\right)^{-1}\left[C_{c}^{\infty}\left(\Omega\right)\right]\subseteq\interior Z\left(\Omega\right)\subseteq\dom\left(\interior d\right).
\]
and so
\[
\left(1+\eta\mathcal{L}_{e_{3}}\right)\left[\interior z\left(\Omega\right)\right]=C_{c}^{\infty}\left(\Omega\right)\text{ dense in }\dom\left(\interior d\right).
\]
Since
\[
\interior z\left(\Omega\right)\subseteq\Lambda^{k}(\Omega)\cap\dom(\mathcal{L}_{e_{3}}),
\]
the desired density property follows.
\end{proof}
\item{\textbf{Case 2:}}
\begin{prop}
We have
\[
\left(1+\eta\mathcal{L}_{e_{3}}\right)\left[\Lambda^{k}(\Omega)\cap\dom(\mathcal{L}_{e_{3}})\right]\cap\dom(\interior{d})\text{ is dense in }\dom\left(\interior d\right).
\]
\end{prop}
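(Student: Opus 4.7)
The plan is to repeat the argument of Proposition \ref{prop:staright_tube} essentially verbatim, modified only for the fact that the third coordinate $x_3$ is now treated periodically. By a transverse cut-off argument, the subspace
\[
\interior{Z}(\Omega) := \bigl\{\phi \in C^\infty(\Omega) \cap \dom(d) : \supp(\phi) \subseteq \tilde{\Sigma} \times {]-1/2,1/2[} \text{ for some relatively compact } \tilde{\Sigma} \subseteq \Sigma\bigr\}
\]
is contained in $\dom(\interior d)$, and $C_c^\infty(\Omega)$, with $\Omega$ endowed with the periodic identification in $x_3$, is dense in $\dom(\interior d)$. It therefore suffices to show that every $f \in C_c^\infty(\Omega)$ lies in $(1+\eta\mathcal{L}_{e_3})[\Lambda^k(\Omega) \cap \dom(\mathcal{L}_{e_3})] \cap \dom(\interior d)$.

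To this end I would fix $f \in C_c^\infty(\Omega)$ and set $u := (1+\eta\mathcal{L}_{e_3})^{-1} f$, well-defined by the quasi-$m$-accretivity of $\mathcal{L}_{e_3}$. With $b := \overline{\mathcal{L}_{e_3} - \partial_3}$ a smooth bounded multiplication operator, $u$ satisfies the first-order linear equation
\[
(1 + \eta\partial_3)u + \eta b u = f
\]
in $x_3$, subject to periodic boundary conditions. The structural point distinguishing Case 2 from Case 1 is that $\partial_3$, restricted to $L^2$ of the periodic interval $]-1/2,1/2[$, has purely imaginary discrete spectrum $\{2\pi \mathrm{i}\, k : k \in \mathbb{Z}\}$; hence $1+\eta\partial_3$ is boundedly invertible on $L^2$ for every $\eta \in \R$, and for $|\eta|$ sufficiently small this invertibility persists under the bounded smooth perturbation $\eta b$.

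The main technical step I expect to be slightly delicate is verifying that $u \in C^\infty(\Omega)$ globally and that $\supp u \subseteq \tilde{\Sigma} \times {]-1/2,1/2[}$ whenever $\supp f$ does. Both facts should follow because $(1+\eta\partial_3)^{-1}$ and multiplication by $b$ act only along the $x_3$ direction with smooth coefficients, and in particular commute with multiplication by smooth cut-off functions of the transverse variables; iterating transverse differentiation inside the resolvent then yields $u \in \interior{Z}(\Omega) \subseteq \dom(\interior d) \cap \Lambda^k(\Omega) \cap \dom(\mathcal{L}_{e_3})$. Consequently $f = (1+\eta\mathcal{L}_{e_3}) u$ lies in $(1+\eta\mathcal{L}_{e_3})[\Lambda^k(\Omega) \cap \dom(\mathcal{L}_{e_3})] \cap \dom(\interior d)$, so the latter set contains the dense subspace $C_c^\infty(\Omega)$, establishing the claim.
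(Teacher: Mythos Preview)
Your proposal is correct and follows essentially the same argument as the paper: define $u\coloneqq(1+\eta\mathcal{L}_{e_3})^{-1}f$ for $f\in C_c^\infty(\Omega)$, rewrite the resolvent equation as $(1+\eta\partial_3)u+\eta bu=f$ with $b$ a smooth periodic multiplication operator, and conclude $u\in\interior{Z}(\Omega)\subseteq\dom(\interior d)$ so that $f$ lies in the desired set. The only notational difference is that the paper phrases $\interior{Z}(\Omega)$ via $C^\infty(\overline{\Omega})$ with the explicit matching condition $\phi(\cdot,\cdot,-1/2)=\phi(\cdot,\cdot,1/2)$ and support in $\tilde{\Sigma}\times[-1/2,1/2]$, whereas you work directly on the quotient manifold; these are equivalent descriptions, and your additional remarks on the spectrum of $\partial_3$ and the transverse regularity merely flesh out what the paper leaves implicit.
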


\begin{proof}
As in Proposition \ref{prop:staright_tube} we see that 
\begin{align*}
 & \interior Z\left(\Omega\right)\coloneqq\\
 & =\left\{ \phi\in C^{\infty}\left(\overline{\Omega}\right)\cap\dom\left(d\right)\,;\,\phi\left(\cdot,\cdot,-1/2\right)=\phi\left(\cdot,\cdot,1/2\right),\right.\\
 & \qquad\left.\supp\left(\phi\right)\subseteq\tilde{\Sigma}\times[-1/2,1/2]\text{ for some }\tilde{\Sigma}\text{ relatively compact in }\Sigma\right\} \\
 & \subseteq\dom\left(\interior d\right).
\end{align*}
We show that $(1+\eta\mathcal{L}_{e_{3}})^{-1}f\in\interior{Z}(\Omega)$
for $f\in C_{c}^{\infty}(\Omega).$ Indeed, setting $u\coloneqq(1+\eta\mathcal{L}_{e_{3}})^{-1}f$
and $b\coloneqq\overline{\mathcal{L}_{e_{3}}-\partial_{3}}$, we infer
that 
\[
\left(1+\eta\partial_{3}\right)u+\eta bu=f.
\]
Since $b$ is a periodic multiplication operator, we obtain $u\in\interior{Z}(\Omega)$
and thus, we have in particular
\[
\interior z\left(\Omega\right)\coloneqq\left(1+\eta\mathcal{L}_{e_{3}}\right)^{-1}\left[C_{c}^{\infty}(\Omega)\right]\subseteq\interior Z\left(\Omega\right)\subseteq\dom\left(\interior d\right).
\]
and so
\[
\left(1+\eta\mathcal{L}_{e_{3}}\right)\left[\interior z\left(\Omega\right)\right]=C_{c}^{\infty}(\Omega)\text{ dense in }\dom\left(\interior d\right).
\]
Since
\[
\interior z\left(\Omega\right)\subseteq\Lambda^{k}(\Omega)\cap\dom(\mathcal{L}_{e_{3}}),
\]
the desired density property follows.
\end{proof}
\end{itemize}
\begin{rem}
We have provided two examples, where the solution theory developed
in Subsection \ref{subsec:The-Equations-on} can be applied. Using
isometries between Riemannian manifolds, we can apply our solution
theory to a broader class of examples. More precisely, let $M$ and
$N$ be two Riemannian manifolds and $\Phi:M\to N$ be smooth and
orientation preserving. We denote the associated pull-back of cotangential
vectorfields (and tensors thereof) by $\Phi^{\ast}$ and the push-foward
of tangential vectorfields (and tensors thereof) by $\Phi_{\ast}$.
We assume that $\Phi$ is an isometry; that is,
\[
\Phi^{\ast}g_{N}=g_{M},
\]
where $g_{N}$ and $g_{M}$ denote the Riemannian metrics on $N$
and $M$, respectively. An easy computation shows that $\Phi^{\ast}$
commutes with the Hodge-star operator in the sense that 
\[
\Phi^{\ast}\ast_{N}=\ast_{M}\Phi^{\ast}
\]
and also with the exterior derivative. Thus, in particular 
\[
\Phi^{\ast}\left(\begin{array}{cc}
0 & -\interior{d}_{N}^{\ast}\\
\interior{d}_{N} & 0
\end{array}\right)=\left(\begin{array}{cc}
0 & -\interior{d}_{M}^{\ast}\\
\interior{d}_{M} & 0
\end{array}\right)\Phi^{\ast}.
\]
Moreover, $\Phi^{\ast}$ interacts with the Lie-derivative in the
following way 
\[
\Phi^{\ast}\mathcal{L}_{X}=\mathcal{L}_{\Phi_{\ast}^{-1}X}\Phi^{\ast}
\]
for a vectorfield $X$ on $N$. Hence, we can transform the equation
on $N$ via $\Phi^{\ast}$ into a corresponding equation on $M$.
Moreover, due to the isometry of $\Phi$, the condition on $M_{0}$,
defined on forms on $N$ (selfadjointness and positive definiteness)
carries over to the transformed operator $\Phi^{\ast}M_{0}\left(\Phi^{\ast}\right)^{-1}.$
Thus, if we can apply the solution theory to the problem posed on
the reference manifold $M$, we also derive the well-posedness on
the manifold $N$. Hence, by transforming the two reference situations
above, we can also treat the case of a infinite deformed pipe or a
deformed torus. 
\end{rem}

\subsection{An abstract localisation technique}

In this section we inspect a localisation technique, which will allow
us to glue together different open subsets of a Riemannian manifold.
Before we come to the concrete application, we will present this technique
in an abstract functional analytic setting. Throughout this section,
we consider the following setting:\\
Let $H$ be a Hilbert space and $U\subseteq H$ a closed subspace.
We denote the canonical embedding of $U$ into $H$ by $\iota_{U}:U\to H$
and remark that the adjoint $\iota_{U}^{\ast}:H\to U$ assigns each
element in $H$ its best approximation in $U$. Note that then $\iota_{U}\iota_{U}^{\ast}:H\to H$
is the orthogonal projector onto $U$ and $\iota_{U}^{\ast}\iota_{U}:U\to U$
is the identity on $U$. Moreover, let $A:\dom(A)\subseteq H\to H$
and $B:\dom(B)\subseteq U\to U$ be two densely defined closed linear
operators and we assume that $A^{\ast}$ leaves $U$ invariant. Finally,
let $S:H\to H$ be a bounded linear operator such that $\ran(S),\ran(S^{\ast})\subseteq U$,
$S[\dom(A)]\subseteq\dom(A),$ and $[S,A]$ is continuous.
\begin{example}
A typical example for the situation above is as follows: Let $\Omega\subseteq M$
an open subset of a Riemannian manifold and $\tilde{\Omega}\subseteq\Omega$
open. We set $H\coloneqq\Lambda_{2}^{k}(\Omega)\oplus\Lambda_{2}^{k+1}(\Omega)$
for some $k\in\mathbb{N}$ and $U\coloneqq\Lambda^{k}(\tilde{\Omega})\oplus\Lambda^{k+1}(\tilde{\Omega}).$
Moreover, let 
\[
A\coloneqq\left(\begin{array}{cc}
0 & -\interior{d}_{\Omega}^{\ast}\\
\interior{d}_{\Omega} & 0
\end{array}\right),\quad B\coloneqq\left(\begin{array}{cc}
0 & -\interior{d}_{\tilde{\Omega}}^{\ast}\\
\interior{d}_{\tilde{\Omega}} & 0
\end{array}\right),
\]
where $\interior{d}_{\Omega}$ and $\interior{d}_{\tilde{\Omega}}$
denote the exterior derivative with homogeneous Dirichlet boundary
conditions on $\Omega$ and $\tilde{\Omega}$, respectively. Moreover,
let $\phi:\Omega\to\R$ be smooth with $\supp\phi\subseteq\tilde{\Omega}$
and denote by $S$ the multiplication operator with $\phi$. 
\end{example}

We start with two useful observations.
\begin{lem}
\label{lem:commutator_S_and_A} $S^{\ast}[\dom(A^{\ast})]\subseteq\dom(A^{\ast})$
and $\overline{[A^{\ast},S^{\ast}]}=\overline{[S,A]}^{\ast}$.
\end{lem}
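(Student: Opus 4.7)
The plan is to derive both assertions from the general adjoint-transmutator identity already proved in Lemma~\ref{lem:adjoint_transmut}. Specialising that lemma with $C = A$, $D = 0$ (so that $\dom(C+D) = \dom(A)$), and $L = R = S$, the hypotheses reduce exactly to what is assumed in the present setting: $S[\dom(A)] \subseteq \dom(A)$, and $[L, C+D, R] = SA - AS = [S,A]$ is densely defined and continuous. Since $A$ is closed and densely defined, $\dom(A^{*})$ is automatically dense in $H$, so the ``moreover'' part of Lemma~\ref{lem:adjoint_transmut} applies; its conclusions give both $S^{*}[\dom(A^{*})] \subseteq \dom(A^{*})$ and, after a sign bookkeeping, $\overline{[A^{*}, S^{*}]} = \overline{[S,A]}^{*}$.

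For transparency I would also spell out the one-line direct computation that underlies the invocation. For $x \in \dom(A^{*})$ and $y \in \dom(A)$, the inclusion $Sy \in \dom(A)$ permits the rewriting
\[
\langle S^{*} x, A y \rangle = \langle x, S A y \rangle = \langle x, A S y \rangle + \langle x, [S,A] y \rangle = \langle S^{*} A^{*} x, y \rangle + \langle \overline{[S,A]}^{*} x, y \rangle,
\]
where in the last step the bounded extension $\overline{[S,A]}$ of the commutator is used. The right-hand side is a bounded linear functional of $y \in \dom(A)$, so by the very definition of the adjoint one concludes $S^{*} x \in \dom(A^{*})$ together with
\[
A^{*} S^{*} x = S^{*} A^{*} x + \overline{[S,A]}^{*} x,
\]
i.e.\ $[A^{*}, S^{*}] x = \overline{[S,A]}^{*} x$ for every $x \in \dom(A^{*})$.

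The second assertion is then immediate: the commutator $[A^{*}, S^{*}]$, a priori only defined on $\dom(A^{*})$, coincides there with the everywhere-defined bounded operator $\overline{[S,A]}^{*}$ (the adjoint of a bounded operator is bounded). Because $\dom(A^{*})$ is dense in $H$, taking closures yields the claimed identity $\overline{[A^{*}, S^{*}]} = \overline{[S,A]}^{*}$. I do not foresee a genuine obstacle in this proof: the only non-trivial input beyond the standing hypotheses is the density of $\dom(A^{*})$, which follows from the closedness and dense definition of $A$; the rest is a tidy integration-by-parts style manipulation, essentially a specialisation of the reasoning already carried out for Lemma~\ref{lem:adjoint_transmut}.
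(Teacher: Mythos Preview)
Your proposal is correct and follows essentially the same route as the paper: both arguments rest on the inclusion $SA \subseteq AS + \overline{[S,A]}$ (Lemma~\ref{lem:transmutation}(a)) and pass to adjoints. The paper does this directly at the level of operator inclusions, whereas you invoke the packaged version Lemma~\ref{lem:adjoint_transmut} (with $C=A$, $D=0$, $L=R=S$) and then spell out the same inner-product computation; the content is identical.
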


\begin{proof}
We recall from \prettyref{lem:transmutation} (a) 
\[
SA\subseteq AS+\overline{[S,A]}.
\]
Taking adjoints on both sides yields 
\[
A^{\ast}S^{\ast}=(SA)^{\ast}\supseteq(AS)^{\ast}+\overline{[S,A]}^{\ast}\supseteq S^{\ast}A^{\ast}+\overline{[S,A]}^{\ast}.
\]
The latter gives 
\[
S^{\ast}A^{\ast}\subseteq A^{\ast}S^{\ast}+\overline{[A,S]}^{\ast},
\]
which shows the claim.
\end{proof}
\begin{lem}
\label{lem:Extension}Assume that there exists a mapping $E:\dom(B)\to\dom(A)$
such that $\iota_{U}^{\ast}Ex=x$ and $\iota_{U}^{\ast}AEx=Bx$ for
each $x\in\dom(B).$ Let $v\in\dom(A^{\ast}).$ Then $\iota_{U}^{\ast}S^{\ast}v\in\dom(B^{\ast})$
with $B^{\ast}\iota_{U}^{\ast}S^{\ast}v=\iota_{U}^{\ast}A^{\ast}S^{\ast}v$.
\end{lem}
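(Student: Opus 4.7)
The plan is to verify the defining adjoint relation for $B^{\ast}$ directly: I will show that for every $x\in\dom(B)$ one has
\[
\scp{Bx}{\iota_{U}^{\ast}S^{\ast}v}_{U} = \scp{x}{\iota_{U}^{\ast}A^{\ast}S^{\ast}v}_{U},
\]
whose right-hand side is bounded in $x$, so that the definition of $B^{\ast}$ yields both $\iota_{U}^{\ast}S^{\ast}v\in\dom(B^{\ast})$ and the claimed formula.

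Before computing I would collect three preliminary observations, each of which makes one projection step vacuous at the right moment: $S^{\ast}v\in U$ holds by the assumption $\ran(S^{\ast})\subseteq U$; $S^{\ast}v\in\dom(A^{\ast})$ holds because Lemma \ref{lem:commutator_S_and_A} gives $S^{\ast}[\dom(A^{\ast})]\subseteq\dom(A^{\ast})$; and consequently $A^{\ast}S^{\ast}v\in U$ by the standing hypothesis that $A^{\ast}$ leaves $U$ invariant.

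Fixing $x\in\dom(B)$, I would replace $Bx$ by $\iota_{U}^{\ast}AEx$ via the extension hypothesis and recast the inner product in $H$ using $\iota_{U}$:
\[
\scp{Bx}{\iota_{U}^{\ast}S^{\ast}v}_{U} = \scp{\iota_{U}^{\ast}AEx}{\iota_{U}^{\ast}S^{\ast}v}_{U} = \scp{\iota_{U}\iota_{U}^{\ast}AEx}{S^{\ast}v}_{H}.
\]
Since $\iota_{U}\iota_{U}^{\ast}$ is the selfadjoint orthogonal projector onto $U$ and $S^{\ast}v\in U$, it may be shifted to the right factor where it acts as the identity, producing $\scp{AEx}{S^{\ast}v}_{H}$. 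Using $S^{\ast}v\in\dom(A^{\ast})$ I would transfer $A$ to obtain $\scp{Ex}{A^{\ast}S^{\ast}v}_{H}$, and since $A^{\ast}S^{\ast}v\in U$ the same projector trick combined with $\iota_{U}^{\ast}Ex=x$ yields $\scp{x}{\iota_{U}^{\ast}A^{\ast}S^{\ast}v}_{U}$, closing the identity.

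The argument is essentially bookkeeping and I do not expect a real obstacle. The only load-bearing ingredients are the extension hypothesis on $E$ (which trades $B$ for $A$) and the conjunction of $\ran(S^{\ast})\subseteq U$ with the invariance of $U$ under $A^{\ast}$ (which allows the adjoint computation carried out on $H$ to descend to $U$); Lemma \ref{lem:commutator_S_and_A} is precisely what legalises the transfer across $A^{\ast}$ by guaranteeing $S^{\ast}v\in\dom(A^{\ast})$.
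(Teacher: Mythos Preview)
Your proof is correct and follows essentially the same strategy as the paper's: verify the adjoint relation $\scp{Bx}{\iota_{U}^{\ast}S^{\ast}v}_{U}=\scp{x}{\iota_{U}^{\ast}A^{\ast}S^{\ast}v}_{U}$ by replacing $Bx$ with $\iota_{U}^{\ast}AEx$, passing to $H$, and exploiting $\ran(S^{\ast})\subseteq U$ together with the $A^{\ast}$-invariance of $U$. The only difference is cosmetic: the paper moves $S^{\ast}$ back to the left as $S$, splits $SA=AS+\overline{[S,A]}$, and then reassembles $S^{\ast}A^{\ast}+\overline{[S,A]}^{\ast}=A^{\ast}S^{\ast}$ via Lemma~\ref{lem:commutator_S_and_A}, whereas you invoke that lemma once to obtain $S^{\ast}v\in\dom(A^{\ast})$ and transfer $A$ in a single step---a mild streamlining of the same argument.
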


\begin{proof}
For $u\in\dom(B)$ we compute 
\begin{align*}
\langle Bu,\iota_{U}^{\ast}S^{\ast}v\rangle & =\langle S\iota_{U}Bu,v\rangle\\
 & =\langle S\iota_{U}\iota_{U}^{\ast}AEu,v\rangle\\
 & =\langle\iota_{U}\iota_{U}^{\ast}AEu,S^{\ast}v\rangle\\
 & =\langle AEu,S^{\ast}v\rangle\\
 & =\langle SAEu,v\rangle\\
 & =\langle ASEu,v\rangle+\langle\overline{[S,A]}Eu,v\rangle\\
 & =\langle Eu,\left(S^{\ast}A^{\ast}+\overline{[S,A]}^{\ast}\right)v\rangle\\
 & =\langle Eu,A^{\ast}S^{\ast}v\rangle.
\end{align*}
To finish the proof, observe that $A^{\ast}S^{\ast}v\in U$ and hence
\[
\langle Eu,A^{\ast}S^{\ast}v\rangle=\langle Eu,\iota_{U}\iota_{U}^{\ast}A^{\ast}S^{\ast}v\rangle=\langle\iota_{U}^{\ast}Eu,\iota_{U}^{\ast}A^{\ast}S^{\ast}v\rangle=\langle u,\iota_{U}^{\ast}A^{\ast}S^{\ast}v\rangle.
\]
This shows $\iota_{U}^{\ast}S^{\ast}v\in\dom(B^{\ast})$ and $B^{\ast}\iota_{U}^{\ast}S^{\ast}v=\iota_{U}^{\ast}A^{\ast}S^{\ast}v.$
\end{proof}
We now come to the concrete application. Let $\Omega\subseteq M$
be open for a smooth Riemannian manifold $M$ of odd dimension $n$.
Moreover, let $1\leq k<n$ and set $H\coloneqq\Lambda^{k}(\Omega)\times\Lambda^{k+1}(\Omega).$
We assume that $M_{0},M_{1}\in L(H)$ such that $M_{0}$ is selfadjoint
and $M_{0}\geq c>0$. Moreover, let $X_{0}\in T_{1}^{0}(M)$ and $\alpha\in L_{\infty}(\Omega;\R)$.
We again consider an equation of the form 
\[
\left(\partial_{0}M_{0}+\alpha\left(\begin{array}{cc}
\nabla_{X_{0}} & 0\\
0 & \nabla_{X_{0}}
\end{array}\right)M_{0}+M_{1}+\left(\begin{array}{cc}
0 & -\interior{d}^{*}\\
\interior{d} & 0
\end{array}\right)\right)U=F
\]
but now we assume that the vectorfield $X_{0}$ and the function $\alpha$
are supported in an open subset $\tilde{\Omega}\subseteq\Omega$.
Moreover, we assume that $M_{0}$ and $M_{1}$ are local operators
(e.g. multiplication operators) and that the hypotheses are satisfied
on two open subdomains $\tilde{\Omega}\subseteq\Omega$ and $\hat{\Omega}\subseteq\Omega$
with $\Omega=\hat{\Omega}\cup\tilde{\Omega}$ and $\alpha$ and $X_{0}$
vanish on $\hat{\Omega}.$ Then we can solve the problem separately
on $\tilde{\Omega}$ and $\hat{\Omega}.$ We now employ the localisation
technique to illustrate how this yields a solution theory for the
original problem on $\Omega$. The crucial point for showing the well-posedness
is the essential skew-selfadjointness of $D+\left(\begin{array}{cc}
0 & -\interior{d}^{\ast}\\
\interior{d} & 0
\end{array}\right)$ with $D\coloneqq\skew\alpha M_{0}\left(\begin{array}{cc}
\mathcal{L}_{X_{0}} & 0\\
0 & -\mathcal{L}_{X_{0}}^{*}
\end{array}\right)$. In order to show this, we employ the result above in the following
two situations: Let $\phi:\Omega\to[0,1]$ be smooth with $\supp\phi\subseteq\tilde{\Omega}$
and $\supp X_{0}\subseteq[\phi=1].$ Moreover, we set 
\[
A\coloneqq\overline{\skew\alpha M_{0}\left(\begin{array}{cc}
\mathcal{L}_{X_{0}} & 0\\
0 & -\mathcal{L}_{X_{0}}^{*}
\end{array}\right)+\left(\begin{array}{cc}
0 & -\interior{d}^{\ast}\\
\interior{d} & 0
\end{array}\right)}
\]

\begin{itemize}
\item $U_{1}\coloneqq\Lambda^{k}(\tilde{\Omega})\times\Lambda^{k+1}(\tilde{\Omega}),$
$B_{1}\coloneqq\overline{\iota_{U_{1}}^{\ast}\skew\alpha M_{0}\left(\begin{array}{cc}
\mathcal{L}_{X_{0}} & 0\\
0 & -\mathcal{L}_{X_{0}}^{*}
\end{array}\right)\iota_{U_{1}}+\left(\begin{array}{cc}
0 & -\interior{d}_{\tilde{\Omega}}^{\ast}\\
\interior{d}_{\tilde{\Omega}} & 0
\end{array}\right)}$ and $S_{1}\coloneqq\phi(m)$ the multiplication operator with $\phi.$
In order to apply \prettyref{lem:Extension} we have to assume the
existence of an extension operator $E_{1}:\dom(B_{1})\to\dom(A),$
which is an implicit regularity assumption for the part of the boundary
of $\tilde{\Omega}$ in the interior of $\Omega.$ 
\item $U_{2}\coloneqq\Lambda^{k}(\hat{\Omega})\times\Lambda^{k+1}(\hat{\Omega}),$
$B_{2}\coloneqq\left(\begin{array}{cc}
0 & -\interior{d}_{\hat{\Omega}}^{\ast}\\
\interior{d}_{\hat{\Omega}} & 0
\end{array}\right)$ and $S_{2}\coloneqq1-\phi(m)$ the multiplication operator with $1-\phi.$
Again, we have to assume the existence of an extension operator $E_{2}:\dom(B_{2})\to\dom(A),$
which is an implicit regularity assumption for the part of the boundary
of $\hat{\Omega}$ in the interior of $\Omega.$
\end{itemize}
\begin{thm}
Assume that $B_{1}$ and $B_{2}$ are skew-selfadjoint operators on
$U_{1}$ and $U_{2}$, respectively. Then $A$ is skew-selfadjoint
on $H$.
\end{thm}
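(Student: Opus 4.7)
\emph{Overall strategy.} Since the generator of $A$'s closure is formally skew-symmetric --- $D$ is skew-selfadjoint by Proposition \ref{prop:decomposition} and the off-diagonal block $\begin{pmatrix} 0 & -\interior{d}^{\ast} \\ \interior{d} & 0 \end{pmatrix}$ is skew-symmetric on $\dom(\interior{d})\times\dom(\interior{d}^{\ast})$ --- one automatically has $A\subseteq -A^{\ast}$. The plan is to establish the reverse inclusion $\dom(A^{\ast})\subseteq\dom(A)$ with $Av=-A^{\ast}v$, and to do so by a partition-of-unity localisation that imports the assumed skew-selfadjointness of $B_{1}$ and $B_{2}$.

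\emph{Localisation.} Given $v\in\dom(A^{\ast})$, decompose $v=S_{1}^{\ast}v+S_{2}^{\ast}v$, which is legitimate since $S_{1}+S_{2}=\phi+(1-\phi)=1$. I would verify the abstract hypotheses of this subsection for each pair $(U_{i},B_{i},S_{i})$: $[S_{i},A]$ is the bounded multiplication by the smooth form $\mathrm{d}\phi$; $\ran(S_{i})$ and $\ran(S_{i}^{\ast})$ lie in $U_{i}$ because $\supp\phi\subseteq\tilde\Omega$ and $\phi\equiv 1$ outside $\hat\Omega$; and $A^{\ast}$ leaves each $U_{i}$ invariant by locality of $\interior{d}$, $\mathcal{L}_{X_{0}}$, $M_{0}$ and $M_{1}$. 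Given the postulated extensions $E_{i}\colon\dom(B_{i})\to\dom(A)$, Lemma \ref{lem:Extension} then yields
\[
\iota_{U_{i}}^{\ast}S_{i}^{\ast}v\in\dom(B_{i}^{\ast}),\qquad B_{i}^{\ast}\iota_{U_{i}}^{\ast}S_{i}^{\ast}v=\iota_{U_{i}}^{\ast}A^{\ast}S_{i}^{\ast}v.
\]
Invoking $B_{i}^{\ast}=-B_{i}$ converts this into $\iota_{U_{i}}^{\ast}S_{i}^{\ast}v\in\dom(B_{i})$ with $B_{i}\iota_{U_{i}}^{\ast}S_{i}^{\ast}v=-\iota_{U_{i}}^{\ast}A^{\ast}S_{i}^{\ast}v$.

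\emph{Transfer back to $A$.} Next I would approximate $\iota_{U_{i}}^{\ast}S_{i}^{\ast}v$ in the graph norm of $B_{i}$ by elements $w_{n}^{(i)}$ of the pre-closure domain defining $B_{i}$. Extending by zero, $\iota_{U_{i}}w_{n}^{(i)}\in H$ lies in the pre-closure domain of $A$: the $\interior d$-block is preserved under trivial extension, and the drift piece is localised to $\tilde\Omega$ via $\supp\alpha,\supp X_{0}\subseteq\tilde\Omega$, so for $i=1$ it matches $\iota_{U_{1}}$ applied to the pre-$B_{1}$ drift and for $i=2$ it vanishes altogether. Hence the pre-$A$ action on $\iota_{U_{i}}w_{n}^{(i)}$ equals $\iota_{U_{i}}$ applied to the pre-$B_{i}$ action on $w_{n}^{(i)}$, and passing to the limit gives
\[
\iota_{U_{i}}w_{n}^{(i)}\to S_{i}^{\ast}v,\qquad \text{(pre-}A\text{)}\,\iota_{U_{i}}w_{n}^{(i)}\to -\iota_{U_{i}}\iota_{U_{i}}^{\ast}A^{\ast}S_{i}^{\ast}v=-A^{\ast}S_{i}^{\ast}v,
\]
where the last equality uses the invariance $A^{\ast}S_{i}^{\ast}v\in U_{i}$. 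By closedness of $A$, $S_{i}^{\ast}v\in\dom(A)$ with $AS_{i}^{\ast}v=-A^{\ast}S_{i}^{\ast}v$; summing $i=1,2$ yields $v\in\dom(A)$ and $Av=-A^{\ast}v$, proving $A=-A^{\ast}$.

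\emph{Main difficulty.} The delicate point is the middle step of the previous paragraph: that for $w_{n}^{(i)}$ in the pre-closure domain of $B_{i}$ the trivial extensions $\iota_{U_{i}}w_{n}^{(i)}$ are genuinely in the pre-closure domain of $A$ and that the two concrete operators agree on them. This hinges on locality of all operators involved, on the support properties of $\alpha$ and $X_{0}$, and on the structural properties of $\phi$. The existence of the extensions $E_{i}$, which is the only non-abstract assumption beyond skew-selfadjointness of the $B_{i}$, is what circumvents any explicit boundary regularity at $\partial\tilde\Omega\cap\Omega$ and $\partial\hat\Omega\cap\Omega$ by making Lemma \ref{lem:Extension} applicable.
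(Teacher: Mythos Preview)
Your overall strategy coincides with the paper's: reduce to $\dom(A^{\ast})\subseteq\dom(A)$, apply Lemma~\ref{lem:Extension} to obtain $\iota_{U_i}^{\ast}S_i v\in\dom(B_i^{\ast})=\dom(B_i)$, approximate in the $B_i$-graph norm, push the approximants back to $H$, and sum. The gap is in your ``transfer back to $A$'' step, and it is precisely the one you flag as delicate without resolving.

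The pre-closure domain of $B_1$ is contained in $\dom(\interior{d}_{\tilde\Omega})\times\dom(\interior{d}_{\tilde\Omega}^{\ast})$. The first component does extend by zero into $\dom(\interior{d}_{\Omega})$, since $\interior{d}$ carries the Dirichlet condition. But the second component lies only in $\dom(\interior{d}_{\tilde\Omega}^{\ast})$, the \emph{maximal} codifferential on $\tilde\Omega$, which imposes no boundary behaviour on $\partial\tilde\Omega$. Its extension by zero to $\Omega$ will in general produce a distributional jump across $\partial\tilde\Omega\cap\Omega$, so $\iota_{U_1}w_n^{(1)}$ need not lie in $\dom(\interior{d}_{\Omega}^{\ast})$, hence not in the pre-closure domain of $A$. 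The same obstruction arises for $i=2$. Thus the sentence ``the $\interior d$-block is preserved under trivial extension'' is only half true.

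The paper repairs this by inserting a second smooth cutoff $\zeta$ with $\supp\zeta\subseteq\tilde\Omega$ and $\zeta=1$ on $\supp\phi$: one works with $\zeta\tilde\psi_n$ instead of $\tilde\psi_n$. Multiplication by $\zeta$ keeps $\psi_n$ in the pre-closure domain of $B_1$ (the commutator $[B_1\text{-generator},\zeta]$ is bounded), forces support away from $\partial\tilde\Omega$ so that extension by zero \emph{does} land in $\dom(\interior{d}_{\Omega}^{\ast})$, and still satisfies $\zeta\tilde\psi_n\to S_1 v$ because $\zeta=1$ on $\supp\phi\supseteq\supp(S_1 v)$. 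With this modification your argument goes through and matches the paper's.
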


\begin{proof}
Since $A$ is clearly skew-symmetric, it suffices to prove $\dom(A^{\ast})\subseteq\dom(A).$
For doing so, let $v\in\dom(A^{\ast}).$ By \prettyref{lem:Extension}
we have that $\iota_{U_{1}}^{\ast}S_{1}v\in\dom(B_{1}^{\ast})=\dom(B_{1})$
as well as $\iota_{U_{2}}^{\ast}S_{2}v\in\dom(B_{2}^{\ast})=\dom(B_{2}).$
Next, we observe that $S_{1}v,S_{2}v\in\dom(A).$ Indeed, since $\iota_{U_{1}}^{\ast}S_{1}v\in\dom(B_{1})$
we find a sequence $\left(\psi_{n}\right)_{n}$ in $\dom\left(\iota_{U_{1}}^{\ast}\skew\alpha M_{0}\left(\begin{array}{cc}
\mathcal{L}_{X_{0}} & 0\\
0 & -\mathcal{L}_{X_{0}}^{*}
\end{array}\right)\iota_{U_{1}}\right)\cap\dom\left(\begin{array}{cc}
0 & -\interior{d}_{\tilde{\Omega}}^{\ast}\\
\interior{d}_{\tilde{\Omega}} & 0
\end{array}\right)$ with 
\[
\psi_{n}\to\iota_{U_{1}}^{\ast}S_{1}v\text{ and }\left(\iota_{U_{1}}^{\ast}\skew\alpha M_{0}\left(\begin{array}{cc}
\mathcal{L}_{X_{0}} & 0\\
0 & -\mathcal{L}_{X_{0}}^{*}
\end{array}\right)\iota_{U_{1}}+\left(\begin{array}{cc}
0 & -\interior{d}_{\tilde{\Omega}}^{\ast}\\
\interior{d}_{\tilde{\Omega}} & 0
\end{array}\right)\right)\psi_{n}\to B_{1}\iota_{U_{1}}^{\ast}S_{1}v\quad(n\to\infty)
\]
in $U_{1}.$ Take a smooth function $\zeta:\Omega\to\R$ with $\supp\zeta\subseteq\tilde{\Omega}$
and $\zeta=1$ on $\supp\phi.$ Moreover, we denote by $\tilde{\psi}_{n}$
the extension of $\psi_{n}$ to $\Omega$ by $0$. Then 
\[
\zeta\tilde{\psi}_{n}\to S_{1}v
\]
in $H$ and 
\begin{align*}
 & \left(\skew\alpha M_{0}\left(\begin{array}{cc}
\mathcal{L}_{X_{0}} & 0\\
0 & -\mathcal{L}_{X_{0}}^{*}
\end{array}\right)+\left(\begin{array}{cc}
0 & -\interior{d}^{\ast}\\
\interior{d} & 0
\end{array}\right)\right)\zeta\tilde{\psi}_{n}\\
 & =\iota_{U_{1}}\left(\iota_{U_{1}}^{\ast}\skew\alpha M_{0}\left(\begin{array}{cc}
\mathcal{L}_{X_{0}} & 0\\
0 & -\mathcal{L}_{X_{0}}^{*}
\end{array}\right)\iota_{U_{1}}+\left(\begin{array}{cc}
0 & -\interior{d}_{\tilde{\Omega}}^{\ast}\\
\interior{d}_{\tilde{\Omega}} & 0
\end{array}\right)\right)\zeta\psi_{n}\\
 & =\iota_{U_{1}}\zeta\left(\iota_{U_{1}}^{\ast}\skew\alpha M_{0}\left(\begin{array}{cc}
\mathcal{L}_{X_{0}} & 0\\
0 & -\mathcal{L}_{X_{0}}^{*}
\end{array}\right)\iota_{U_{1}}+\left(\begin{array}{cc}
0 & -\interior{d}_{\tilde{\Omega}}^{\ast}\\
\interior{d}_{\tilde{\Omega}} & 0
\end{array}\right)\right)\psi_{n}+\\
 & \quad+\iota_{U_{1}}\left[\iota_{U_{1}}^{\ast}\skew\alpha M_{0}\left(\begin{array}{cc}
\mathcal{L}_{X_{0}} & 0\\
0 & -\mathcal{L}_{X_{0}}^{*}
\end{array}\right)\iota_{U_{1}}+\left(\begin{array}{cc}
0 & -\interior{d}_{\tilde{\Omega}}^{\ast}\\
\interior{d}_{\tilde{\Omega}} & 0
\end{array}\right),\zeta\right]\psi_{n}.
\end{align*}
Since $\zeta$ is smooth, the commutator $\left[\iota_{U_{1}}^{\ast}\skew\alpha M_{0}\left(\begin{array}{cc}
\mathcal{L}_{X_{0}} & 0\\
0 & -\mathcal{L}_{X_{0}}^{*}
\end{array}\right)\iota_{U_{1}}+\left(\begin{array}{cc}
0 & -\interior{d}_{\tilde{\Omega}}^{\ast}\\
\interior{d}_{\tilde{\Omega}} & 0
\end{array}\right),\zeta\right]$ is continuous and hence, the latter term converges in $H$ as $n\to\infty.$
Thus, $S_{1}v\in\dom(A)$ by definition of $A$. In the same way one
obtains $S_{2}v\in\dom(A)$ and thus, $v=S_{1}v+S_{2}v\in\dom(A),$
which completes the proof.
\end{proof}

\section*{Acknowledgement}

The first author is indebted to Martin Berggren and Linus Hägg for
the inspiration and the opportunity for interesting extended discussions.

\end{document}